\newtheorem{theorem}{Theorem}[section]
\newtheorem*{main*}{Main Theorem}
\newtheorem{lemma}[theorem]{Lemma}
\newtheorem{proposition}[theorem]{Proposition}
\newtheorem{corollary}[theorem]{Corollary}
\newtheorem{claim}{\textsc{Claim}}
\newtheorem*{claim*}{\textsc{Claim}}
\theoremstyle{definition}
\newtheorem*{def*}{Definition}
\newtheorem{definition}[theorem]{Definition}
\newtheorem{remark}[theorem]{\textbf{Remark}}
\definecolor{blue-url}{RGB}{0,0,100}
\DeclareMathSymbol{\widehatsym}{\mathord}{largesymbols}{"62}
\renewcommand{\emptyset}{\varnothing}
\renewcommand{\setminus}{\smallsetminus}
\providecommand{\cat}{{\sf Ca}}
\providecommand{\LLc}{\mathscr{L}}
\providecommand{\LLs}{\mathsf{L}}
\providecommand{H}{H}
\providecommand{\NNb}{{\mathbf{N}}}
\providecommand{\ord}{{\rm ord}}
\providecommand{\PPc}{\mathcal{P}}
\providecommand{\UUc}{\mathscr{U}}
\providecommand\llb{\llbracket}
\providecommand\rrb{\rrbracket}
\providecommand\fin{{\rm fin}}
\providecommand\fun{{{\rm fin},\times}}
\providecommand\fuN{{{\rm fin},0}}
\newcommand{\dual}[1]{%
 \mathcal P_{\rm fin}({#1})%
}
\newcommand{\fixed}[2][1]{%
 \begingroup
 \spaceskip=#1\fontdimen2\font minus \fontdimen4\font
 \xspaceskip=0pt\relax
 #2%
 \endgroup
}
\newcommand{\BF}{\text{\rm BF}}
\begin{document}
\title{Power Monoids: A Bridge between \\ Factorization Theory and Arithmetic Combinatorics}

\author{Yushuang Fan}
\address{Mathematical College, China University of Geosciences | Haidian District, Beijing, China}
\email{fys@cugb.edu.cn}
\urladdr{http://yushuang-fan.weebly.com/}
\author{Salvatore Tringali}
\address{Institute for Mathematics and Scientific Computing, University of Graz, NAWI Graz | Heinrichstr. 36, 8010 Graz, Austria}
\email{salvatore.tringali@uni-graz.at}
\urladdr{http://imsc.uni-graz.at/tringali}
%
\subjclass[2010]{Primary 11B13, 11B30, 20M13. Secondary 11P70, 16U30, 20M25.}
%
%
%
%

\keywords{Atoms; catenary degree; equimorphisms; ir\-re\-duc\-i\-ble sets; monoids; non-unique factorization; power monoids; sets of lengths; set of distances; sumsets; transfer techniques.}
\thanks{Y.F. was supported by the National Natural Science Foundation of China (NSFC), Project No. 11401542, and the China Scholarship Council (CSC). S.T. was supported by the Austrian Science Fund (FWF), Project No. M 1900-N39.}
\begin{abstract}
\noindent{}
We extend a few fundamental aspects of the classical theory of non-unique factorization, as presented in Geroldinger and Halter-Koch's 2006 monograph on the subject, to a non-commutative and non-cancellative setting, in the same spirit of Baeth and Smertnig's work on the factorization theory of non-commutative, but cancellative monoids [J. Algebra \textbf{441} (2015), 475--551].
Then, we bring in power monoids and, applying the abstract machinery developed in the first part, we undertake the study of their arithmetic.

More in particular, let $H$ be a multiplicatively written monoid. The set $\mathcal P_{\rm fin}(H)$ of all non-empty finite subsets of $H$ is naturally made into a monoid, which we call the power monoid of $H$ and is non-cancellative unless $H$ is trivial, by endowing it with the operation $(X,Y) \mapsto \{xy: (x,y) \in X \times Y\}$.
Power monoids are, in disguise, one of the primary objects of interest in arithmetic combinatorics, and here for the first time we tackle them from the perspective of factorization theory.
Proofs lead to consider various properties of finite subsets of $\mathbf N$ that can or cannot be split into a sumset in a non-trivial way, giving rise to a rich interplay with additive number theory.
\end{abstract}
\maketitle
\thispagestyle{empty}

\section{Introduction}
\label{sec:intro}
From the classical point of view, factorization theory is all about the study of phenomena arising from
the non-uniqueness of factorization in atomic monoids and rings, and the classification of these phenomena
by a variety of algebraic, arithmetic, or combinatorial invariants.

The theory grew up out of algebraic number theory and has so far been centered on rings and monoids, where the structures in play are cancellative. The subject has become more and more popular since the publication of Geroldinger and Halter-Koch's 2006 monograph \cite{GeHK06}, which is entirely devoted to the commutative and cancellative case: A more accurate overview of the field is beyond the scope here, but further information and background can be found in the conference proceedings \cite{And97,ChGl00,Ch05,ChFoGeOb16}, in the surveys \cite{BaCh11,BaWi13,Ge16c}, or in the volumes \cite{Nark74,FoHoLu13}.

It is, indeed, the main objective of the present work to extend fundamental aspects of factorization theory to \textit{arbitrary} monoids (in a more systematic way than done in the past) and, as an application, to inquire into the arithmetic properties of a new class of ``highly non-cancellative'' structures we refer to as power monoids (notations and terminology will be explained later, see, in particular, \S\S{} \ref{sec:monoids} and \ref{sec:power_monoid}).

Our motivation is twofold.
On the one hand, there has been a mounting interest for possible generalizations of factorization theory to monoid-like structures that need no longer be commutative or cancellative \cite{BaSm, ChAn13, FGKT, Ge16c, Ge13, GeSc17, Sm13}.
On the other, power monoids are both an effective test bed for these generalizations and, in disguise, one of the primary objects of study in arithmetic combinatorics, a very active area of research, which has undergone tremendous developments in recent years, rapidly expanding from the classical bases of additive number theory \cite{Nath1, Nath2} (where the focus is on the integers)
to much more abstract settings involving non-commutative groups or semigroups \cite{Gry13, Rus, TV06}: In particular, power monoids can serve as a medium for arithmetic combinatorics
to benefit, in the long run, from the in\-ter\-ac\-tion with fac\-tor\-i\-za\-tion theory, much in the same way as the latter has, in its own right, drawn enormous benefits from the former, see \cite{Ge-Ru09,Tr18} and references therein.

For a basic example of the kind of connections alluded to in the previous paragraph, assume that $G$ is an additively written, finite group. A set $X \subseteq G$ is called \textit{irreducible} if there do not exist $A, B \subseteq G$ with $|A|, |B| \ge 2$ such that $X$ is the sumset of $A$ and $B$, namely, $X = \{a+b: (a,b) \in X \times Y\}$. This notion is related to deep questions in arithmetic combinatorics, see, e.g., \cite{Al07, AGU10, Sa12, GyKoSa13, GySa15}; and it follows from the definitions in \S{ }\ref{subsub:sets-of-lengths} and points \ref{it:prop:basic-properties-of-power-monoids(ii)} and \ref{it:prop:basic-properties-of-power-monoids(iii)} of Proposition \ref{prop:basic-properties-of-power-monoids} that a subset of $G$ is irreducible if and only if it is an atom in the power monoid of $G$.
\subsection{Plan of the paper and background.}\label{subsec:plan}
With these ideas in mind, we organize the paper as follows. In \S{ }\ref{sec:monoids}, we first extend a few fundamental aspects of the classical theory of non-unique factorization to a non-commutative and non-cancellative setting, in the same spirit of Baeth and Smertnig's work on the factorization theory of non-commutative, cancellative monoids \cite{BaSm} (see Remarks \ref{rem:where-is-0}--\ref{rem:philosophy-of-equimorphisms}, \ref{rem:decreasing-cat-degree}, \ref{rem:modelled-after}, and \ref{rem:Baeth-Smertning-permutable-distance} 
for a critical comparison).
More specifically, we introduce notions of factorization, distance, and catenary degree, along with a generalization of weak transfer homomorphisms we refer to as equimorphisms, and we prove a number of properties related to these notions:
In particular, we establish that equimorphisms preserve factorization lengths and do not increase the catenary degree (Theorem \ref{th:cotransfer-hom}). Moreover, we give conditions for a unit-cancellative monoid to be atomic (Theorem \ref{th:normable_implies_atomic}) and obtain a characterization of BF-monoids in terms of the existence of a length function (Corollary \ref{cor:BFness-of-unit-cancellative-monoids}), thus improving on analogous results of Smertnig in the cancellative setting \cite[Proposition 3.1]{Sm13}, and Geroldinger, Kainrath, and the authors in the commutative setting \cite[Lemma 3.1(1)]{FGKT}.

Then we bring in power monoids (Definition \ref{def:power-monoids}) and, applying the abstract machinery developed in the former part, undertake the study of their arithmetic. More in detail, let $H$ be a monoid. We denote the power monoid of $H$ by $\mathcal P_\fin(H)$, and show 
that $\mathcal P_{\rm fin}(H)$ is a {\rm BF}-monoid if $H$ is linearly orderable and \textrm{BF} (Proposition \ref{prop:lin-orderable-H}). In addition, we obtain that, if $H$ is a Dedekind-finite, non-torsion monoid,
then $\mathcal P_{\rm fin}(H)$ is not equimorphic to a cancellative monoid (in particular, is not a transfer Krull monoid), and that the union of the sets of lengths of $\mathcal P_{\rm fin}(H)$ containing $k$ is $\mathbf N_{\ge 2}$ for every integer $k \ge 2$; the set of distances (or delta set) is $\mathbf N^+$; and the set of catenary degrees is either $\mathbf N^+ \cup \{\infty\}$ or $\mathbf N^+$, the latter being the case if $H$ is a linearly orderable BF-monoid (Proposition \ref{prop:not_a_transfer_Krull_monoid} and Theorem \ref{th:main-theorem}, respectively).

It is probably worth stressing that we are talking here of several \textit{different} results, insofar as unions of sets of lengths, sets of distances, and sets of catenary degrees are, in principle, ``independent objects'', in the sense that, even in the commutative cancellative setting, none of them can be determined from the knowledge of the other two.

As for the proofs, we use transfer principles (see Remark \ref{rem:philosophy-of-equimorphisms} and Theorems \ref{th:cotransfer-hom} and \ref{th:transfer}) to reduce the kind of arithmetic properties we are considering to corresponding properties of finite subsets of $\mathbf N$ than can or cannot be written as
a sumset in a non-trivial way.

Analogous contributions have been made by many authors in the cancellative setting. In particular, it follows by work of Kainrath \cite[Theorem 1]{Ka90} that the delta set of a commutative Krull monoid with infinite class group in which
every class contains a prime divisor, is equal to $\mathbf N^+$, see also \cite[Theorem 17]{Ge16c}. The same is true, by \cite[Theorem 9]{Fr13}, for the monoid (under multiplication) of integer-valued polynomials with rational coefficients; and more generally, by \cite[Corollary 4.1]{FrNaRi17}, for the monoid of $D$-valued polynomials with coefficients in the fraction field of a Dedekind domain $D$ with infinitely many maximal ideals, all of which have finite index.

In a similar vein, Hassler has established that the set of distances of certain commutative Krull monoids with infinite class group (where every class is a sum of a bounded number of classes containing prime divisors) is infinite, see \cite[Theorem 1]{Has02}, while Smertnig has proved in \cite[Theorem 1.2]{Sm13} that, if $H$ is the multiplicative monoid of the non-zero elements of certain maximal orders in a simple central algebra over a number field, then $H$ is not necessarily a transfer Krull monoid, but the delta set of $H$ is still equal to $\mathbf N^+$ and the union of sets of lengths of $H$ containing $k$ is either $\mathbf N_{\ge 2}$ or $\mathbf N_{\ge 3}$ for every $k \ge 3$.

On a related note, Geroldinger and Schmid have obtained in \cite{GeSch16} that for every non-empty finite set $\Delta \subseteq \mathbf N^+$ with $\min \Delta = \gcd \Delta$ there is
a finitely generated, commutative Krull monoid whose set of distances is $\Delta$, while Geroldinger and Yuan had previously shown \cite[Theorem 1.1]{GeYu12} that the delta set of a commutative Krull monoid having prime divisors in all classes is either empty or a (discrete) interval whose minimum is equal to $1$. The latter result has been subsequently generalized by Geroldinger and Zhong to certain commutative, seminormal, weakly Krull monoids \cite[Theorem 1.1]{GeZh16}, while a non-commutative analogue was established by Smertnig in \cite[Theorem 1.1]{Sm13}. 
Further contributions to this line of research have been made, among others, by Chapman, Gotti, and Pelayo \cite{ChGoPe14}, Garc\'ia-Garc\'ia, Moreno-Fr\'ias, and Vigneron-Tenorio \cite{GaMoVi}, and Chapman, Garc\'ia-S\'anchez, Llena, Ponomarenko, and Rosales \cite{ChGaLlPoRo}.

As for the set of catenary degrees, this was also considered in a couple of recent papers by Fan and Geroldinger \cite{FG016} and O'Neill, Ponomarenko, Tate, and Webb \cite{OPTW16}, with the former focused on commutative Krull monoids and the latter on finitely generated, cancellative, commutative monoids.

We conclude the paper with a probably challenging problem that will stimulate, it is our hope, further research in the topic (see \S{ }\ref{sec:future} for details).
\subsection{Generalities}
\label{subsec:notations}
Unless noted otherwise, we reserve the letters $\ell$, $m$, $n$, and $r$ (with or without subscripts) for positive integers, and the letters $i$, $j$, and $k$ for non-negative integers. We use $\mathbf R$ for the reals, $\mathbf Z$ for the integers, and $\mathbf N$ for the non-negative integers.

A \textit{monoid} is a pair $(H, \otimes)$ consisting of a set $H$ (called the \textit{ground set} of the monoid and systematically identified with it if there is no risk of ambiguity) and an associative (binary) operation $\otimes: H \times H \to H$ for which there exists a (provably unique) element $e \in H$ (the \textit{identity} of the monoid) such that $e \otimes x = x \otimes e = x$ for all $x \in H$.
We assume that monoid homomorphisms preserve the identity, and for $X, Y \subseteq H$ we set
$
X \fixed[-0.2]{\text{ }} \otimes Y := \{x \otimes y: (x,y) \in X \times Y\}$. Note also that, if not stated otherwise, we will systematically use multiplicative notation for arbitrary monoids.

Given a set $\mathscr U$, we denote by $\mathscr{F}^\ast(\mathscr{U})$ the \textit{free monoid with basis $\mathscr{U}$}.
We write $\mathscr{F}^\ast(\mathscr{U})$ multiplicatively, and we adopt the symbol $\ast$ for its operation (so, for instance, if $\mathfrak z \in \mathscr F^\ast(\mathscr U)$, then $\mathfrak z^2 := \mathfrak z \ast \mathfrak z$).
We refer to the elements of $\mathscr{F}^\ast(\mathscr{U})$ as \textit{$\mathscr{U}$-$\fixed[0.2]{\text{ }}$words}, and to the identity, $1_{\mathscr{F}^\ast(\mathscr{U})}$, of $\mathscr{F}^\ast(\mathscr{U})$ as the empty word.

Let $\mathfrak z$ be a $\mathscr{U}$-$\fixed[0.2]{\text{ }}$word. We set $\|\mathfrak z\|_\mathscr{U} := 0$ if $\mathfrak{z} = 1_{\mathscr{F}^\ast(\mathscr{U})}$. Otherwise, there are determined $z_1, \ldots, z_n \in \mathscr{U}$ such that $\mathfrak z = z_1 \ast \cdots \ast z_n$. So we take $\|\mathfrak z\|_\mathscr{U} := n$, and we define $\mathfrak z \fixed[0.15]{\text{ }} z_n^{-1} := z_1^{-1} \mathfrak z := 1_{\mathscr{F}(\mathscr{U})}$ if $n = 1$, and $\mathfrak z \fixed[0.15]{\text{ }} z_n^{-1} := z_1 \ast \cdots \ast z_{n-1}$ and $z_1^{-1} \mathfrak z := z_2 \ast \cdots \ast z_n$ if $n \ge 2$. In both cases, we call $\|\mathfrak z\|_\mathscr{U}$ the (\textit{word}) \textit{length} of $\mathfrak z$ (relative to $\mathscr U$). It is clear that
$
\|\mathfrak z_1 \ast \mathfrak z_2\|_{\mathscr U} = \|\mathfrak z_1\|_{\mathscr U} + \| \mathfrak z_2\|_{\mathscr U}$ for all $\mathfrak z_1, \mathfrak z_2 \in \mathscr F^\ast(\mathscr U)$.

If $a,b \in \mathbf{R} \cup \{\pm \infty\}$, we let $\llb a, b \rrb := \{x \in \mathbf Z: a \le x \le b \}$
stand for the (discrete) interval between $a$ and $b$.
If $\lambda \in \mathbf R$ and $X, Y \subseteq \mathbf R$, we denote by $X^+$ the positive part of $X$ (so, $\mathbf N^+$ is the set of positive integers), and we define the sumset of $X$ and $Y$ by $X + Y := \{x+y: (x,y) \in X \times Y\}$, the $n$-fold sumset of $X$ by $nX := \{x_1 + \cdots + x_n: x_1, \ldots, x_n \in X\}$, and the $\lambda$-dilation of $X$ by
$\lambda \cdot \fixed[-0.3]{\text{ }} X := \{\lambda x: x \in X\}$.

If $X$, $Y$, and $Z$ are sets and $\mathscr{C}$ is an equivalence (relation) on $X$, we denote by $\mathcal P(X)$ the power set of $X$ and by $\llb x \rrb_{\mathscr{C}}$ the (equivalence) class of a fixed element $x \in X$ in the quotient $X/\mathscr{C}$, and we write $X = Y \uplus Z$ to mean that $Y \cap Z = \emptyset$ and $X = Y \cup Z$.

We say that a finite sequence $x_1, \ldots, x_n$ is the natural enumeration of a non-empty set $X \subseteq \mathbf R$ if $X = \{x_1, \ldots, x_n\}$ and $x_i < x_{i+1}$ for every $i \in \llb 1, n-1 \rrb$. Lastly, we assume $\sup(\emptyset) := 0$ and $\inf(\emptyset) := \infty$, and we let $\mathfrak{S}_n$ be the group of permutations of $\llb 1, n \rrb$.

Further notations and terminology, if not explained, are standard or should be clear from the context.
\section{Factorization theory}
\label{sec:monoids}
In this section, we fix some definitions that are at the center of our interest, and we prove some fundamental results that will be used later (in \S\S{} \ref{sec:power_monoid} and \ref{sec:the_case_of_integers}) to investigate the structure of power monoids.
\subsection{Basic definitions and arithmetic invariants.}
\label{subsec:basic}
Throughout, we let $H$ be a (multiplicatively written) monoid with identity $1_H$, and we denote by $H^\times$ the \textit{set of units} (or \textit{invertible elements}) of $H$.

Note that $H$ need not have any special property (e.g., commutativity), unless a statement to the contrary is made. Also, we will systematically drop the subscript `$H$\fixed[0.2]{\text{ }}' from the notations we are going to introduce whenever $H$ is implied from the context and there is no risk of ambiguity.

We say that $H$ is \textit{reduced} if $H^\times = \{1_H\}$; \textit{cancellative} if $xz = yz$ or $zx = zy$, for some $x, y, z \in H$, implies $x = y$; \textit{Dedekind-finite} if $xy = 1_H$ yields $yx = 1_H$; \textit{unit-cancellative} (respectively, \textit{strongly unit-cancellative}) provided that $xy = x$ or $yx = x$ only if $y \in H^\times$ (respectively, $y = 1_H$); \textit{divisible} if, for all $n \in \mathbf N^+$ and $x \in H$, there exists $y \in H$ with $x = y^n$; and \textit{non-torsion} if $\ord_H(x) = \infty$ for some $x \in H$, where $\ord_H(x)$ is the \textit{order} of $x$ (in $H$), i.e., the cardinality of the set $\{x^n: n \in \mathbf N^+\}$.
\begin{remark}
Factorization in unit-cancellative monoids is the subject of recent work by Geroldinger, Kainrath, and the authors in the commutative and finitely generated case \cite{FGKT}, and by Geroldinger and Schwab in the finitely presented case \cite{GeSc17}. In turn, Dedekind-finite monoids, sometimes also referred to as directly finite, weakly $1$-finite, inverse symmetric, or von Neumann-finite monoids,
form a fairly large class, which includes, among many others, the multiplicative monoid of
Artinian or Noetherian rings \cite[Proposition 4.6.6 and Theorem 4.6.7(iii)]{Cohn05},
algebraic algebras over a field \cite[Exercise 1.13]{Lam}, right and left self-injective rings \cite[Corollary 1.1]{Fa03}, and the group ring of a (possibly non-abelian or infinite) group over a field of characteristic zero \cite[Theorem 2.3]{Fa03},
not to mention trivial examples such as commutative or cancellative monoids (and submonoids, direct products, and direct limits of Dedekind-finite monoids).

Both unit-cancellative and Dedekind-finite monoids play a central role in the present paper, though most of the basic definitions and results are worked out in greater generality at no additional cost.
Of course, all cancellative monoids are strongly unit-cancellative, and the latter are unit-cancellative: What is slightly less obvious is that unit-cancellative monoids are Dedekind-finite (Proposition \ref{prop:Dedekind-finite}).
\end{remark}
Given $x, y \in H$, we
write $x \mid_H y$ if $uxv = y$ for some $u, v \in H$, cf. \cite[Definition 5.2(1)]{BaSm}. Moreover, we use $x \simeq_H y$, and we say that $x$ is \textit{associate} to $y$, if $y \in H^\times x H^\times$. Lastly, we take a submonoid $M$ of $H$ to be \textit{divisor-closed} if $x \in M$ whenever $x \mid_H y$ and $y \in M$.
\subsubsection{Atoms and lengths}
\label{subsub:sets-of-lengths}
We let $\mathscr{A}(H)$ stand for the \textit{set of atoms} (or \textit{irreducible elements}) of $H$, where $a \in H$ is an atom if $a \notin H^\times$ and there do not exist $x, y\in H \setminus H^\times$ such that $a = xy$ (note that, in general, the product of two non-units can be a unit, so the first condition does not follow from the second, cf. Lemma \ref{lem:products_and_units}\ref{it:lem:products_and_units(i)} and Proposition \ref{prop:Dedekind-finite}).

We set, for every $x \in H$, $\LLs_H(x) := \{k \in \mathbf N^+: x = a_1 \cdots a_k\text{ for some }a_1, \ldots, a_k \in \mathscr{A}(H)\}$ if $x \ne 1_H$, and $\LLs_H(x) := \{0\} \subseteq \mathbf N$ otherwise: We call an element of $\LLs_H(x)$ a \textit{\textup{(}factorization\textup{)} length} of $x$, and $\LLs_H(x)$ the \textit{set of lengths} of $x$.
Consequently, we say that $H$ is \textit{atomic} (respectively, a \textit{{\rm BF}-monoid}) if $\LLs_H(x)$ is non-empty (respectively, non-empty and finite) for all $x \in H \setminus H^\times$.
\begin{lemma}
\label{lem:basic-properties-atoms-units}
Let $H$ be a monoid. The following hold:
\begin{enumerate}[label={\rm (\roman{*})}]
\item\label{it:lem:basic-properties-atoms-units(i)} If $u, v \in H^\times$, then $uv \in H^\times$. Moreover, the converse is true if $H = H^\times$ or $\mathscr{A}(H) \ne \emptyset$.
\item\label{it:lem:basic-properties-atoms-units(ii)} If $a \in \mathscr{A}(H)$ and $u \in H^\times$, then $u \fixed[0.05]{\text{ }} a, au \in \mathscr{A}(H)$.
\item\label{it:lem:basic-properties-atoms-units(iii)} ${\sf L}_H(u) = \emptyset$ for every $u \in H^\times \setminus \{1_H\}$.
\item\label{it:lem:basic-properties-atoms-units(iv)} ${\sf L}_H(x) = {\sf L}_H(uxv)$ for all $x \in H \setminus H^\times$ and $u, v \in H^\times$.
\end{enumerate}
\end{lemma}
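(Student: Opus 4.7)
For part (i), I would begin with the forward direction as the routine check that $v^{-1} u^{-1}$ is a two-sided inverse of $uv$ when $u, v \in H^\times$. The converse is vacuous when $H = H^\times$, so the interesting case is $\mathscr{A}(H) \neq \emptyset$. My plan there is to fix an atom $a \in \mathscr{A}(H)$ and, setting $w := (uv)^{-1}$, exploit the identity $a = a \cdot 1_H = a(uvw) = (au)(vw)$. Atomicity of $a$ forces either $au \in H^\times$ or $vw \in H^\times$. In the latter case, $u(vw) = 1_H$ yields $u = (vw)^{-1} \in H^\times$ on right-multiplying by $(vw)^{-1}$. In the former, writing $t := (au)^{-1}$, the equation $(ta)u = t(au) = 1_H$ gives $u$ a left inverse, while $u(vw) = 1_H$ gives a right inverse; these coincide by a standard one-line argument, so $u \in H^\times$. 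Once $u$ is a unit, $v = u^{-1}(uv) \in H^\times$ by the forward direction applied to the pair $(u^{-1}, uv)$.

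Part (ii) will come out of (i). Any factorization $ua = xy$ rewrites as $a = (u^{-1}x) y$, and since $a$ is an atom, either $u^{-1}x \in H^\times$ — in which case $x = u(u^{-1}x) \in H^\times$ by the forward direction of (i) — or $y \in H^\times$. That $ua$ is itself a non-unit is because otherwise $a = u^{-1}(ua)$ would be a unit. The argument for $au$ is symmetric.

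Part (iii) then reduces to a short combination of (i) and the definition of an atom. If $u \in H^\times \setminus \{1_H\}$ admitted a factorization $u = a_1 \cdots a_k$ with each $a_i \in \mathscr{A}(H)$, then for $k = 1$ the atom $a_1$ would be a unit, which is forbidden; and for $k \ge 2$, the converse in (i) — whose hypothesis $\mathscr{A}(H) \neq \emptyset$ is witnessed by $a_1$ — applied to $u = a_1 \cdot (a_2 \cdots a_k)$ would again force $a_1 \in H^\times$.

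For part (iv), I would iterate (ii): an atomic factorization $x = a_1 \cdots a_k$ (with $k \ge 1$, since $x \notin H^\times$ rules out $k = 0$) conjugates to $uxv = (ua_1) a_2 \cdots a_{k-1}(a_k v)$, still a product of atoms by (ii), where for $k = 1$ one instead observes that $(ua_1)v \in \mathscr{A}(H)$ by two applications of (ii). This gives $\LLs_H(x) \subseteq \LLs_H(uxv)$; the reverse inclusion is obtained by running the same argument on $uxv$ with the units $u^{-1}, v^{-1}$, after noting that $uxv \notin H^\times$ — else $x = u^{-1}(uxv) v^{-1}$ would be a unit. The most delicate point throughout is the converse in (i): I expect the main obstacle to be resisting any informal use of "if a product is a unit then so are its factors" outside the stated hypotheses, which is exactly what the auxiliary atom in Case A / Case B of my argument is engineered to circumvent.
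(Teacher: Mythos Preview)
Your proposal is correct and follows essentially the same approach as the paper. The only cosmetic differences are that in (i) the paper multiplies the auxiliary atom on the \emph{left} (writing $a = x(yza)$ after assuming $xyz = 1_H$) rather than on the right as you do, and in (iv) the paper rewrites $uxv$ via the conjugated factors $b_i := u a_i u^{-1}$ for $i < k$ and $b_k := u a_k v$, whereas your endpoint-only modification $(ua_1)\,a_2 \cdots a_{k-1}\,(a_k v)$ is marginally simpler; neither difference is substantive.
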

\begin{proof}
\ref{it:lem:basic-properties-atoms-units(i)} The first part is trivial and well known; in particular, if $u, v \in H^\times$, then $uv$ is invertible and $(u \fixed[0.1]{\text{ }} v)^{-1} = v^{-1} u^{-1}$. As for the converse, the claim is obvious if $H = H^\times$. Otherwise, pick $a \in \mathscr{A}(H)$ and suppose for a contradiction that there are $x, y \in H$ such that $xy \in H^\times$, but $x \notin H^\times$ or $y \notin H^\times$. We can assume (by symmetry) that $x \notin H^\times$.
Then $xyz = 1_H$ for some $z \in H$, which yields $a=x(yza)$. So $yza$ must be a unit, since $x$ is not and $a$ is an atom. In particular, $yzav=1_H$ for some $v \in H$. This shows that $yz$ is both left- and right-invertible, hence is invertible.
It follows $x=(yz)^{-1} \in H^\times$, a contradiction.

\ref{it:lem:basic-properties-atoms-units(ii)} Let $a \in \mathscr{A}(H)$ and $u \in H^\times$. We will prove that $a \fixed[0.05]{\text{ }} u$ is an atom (the other case is similar). Indeed, $a \fixed[0.05]{\text{ }} u$ is not a unit: Otherwise, $a = vu^{-1}$ for some $v \in H^\times$, which would imply, say, by point \ref{it:lem:basic-properties-atoms-units(i)} that $a \in H^\times$, a contradiction. Moreover, if $au = xy$ for some $x, y \in H$, then $a = x(yu^{-1})$. So, using that $a$ is an atom, we have $x \in H^\times$, or $yu^{-1} = v$ for some $v \in H^\times$ (and hence $y = vu \in H^\times$). To wit, $a \fixed[0.05]{\text{ }} u \in \mathscr{A}(H)$.

\ref{it:lem:basic-properties-atoms-units(iii)} It is evident that, if $\mathscr{A}(H)$ is empty, then so is ${\sf L}_H(x)$ for every $x \in H \setminus \{1_H\}$, and we are done. Otherwise, we get from point \ref{it:lem:basic-properties-atoms-units(i)} that the units of $H$ cannot be factored into a non-empty product of atoms of $H$, and consequently ${\sf L}_H(u) = \emptyset$ for every $u \in H^\times \setminus \{1_H\}$.

\ref{it:lem:basic-properties-atoms-units(iv)} Let $x \in H \setminus H^\times$ and $u, v \in H^\times$. Since $x = u^{-1}(uxv) v^{-1}$, it is sufficient to prove that $\mathsf L_H(x) \subseteq \mathsf L_H(uxv)$. If $\mathsf L_H(x)$ is empty, this is obvious. Otherwise, pick $k \in \mathsf L_H(x)$. Then $k \ge 1$ (because $x \ne 1_H$), and there exist $a_1, \ldots, a_k \in \mathscr{A}(H)$ such that $x = a_1 \cdots a_k$. It follows $uxv = b_1 \cdots b_k$, with $b_i := ua_i u^{-1}$ for $i \in \llb 1, k-1 \rrb$ and $b_k := ua_n v$. So we conclude from \ref{it:lem:basic-properties-atoms-units(ii)} that $k \in \mathsf L_H(uxv)$, and we are done.
\end{proof}
\begin{remark}
\label{rem:where-is-0}
It is perhaps worth noting that $0 \in {\sf L}_H(x)$ for some $x \in H$ only if $x = 1_H$, in contrast to the standard convention that the set of lengths of \textit{any} unit of $H$ is equal to $\{0\}$. As a matter of fact, we disagree with this convention,
since it looks no longer fit for the non-commutative setting
(cf. Remark \ref{remark:comparison-factorization-monoids}), and all the more in the light of Lemma \ref{lem:basic-properties-atoms-units}\ref{it:lem:basic-properties-atoms-units(iii)}.
\end{remark}
\begin{remark}
\label{rem:factorizations-of-1H}
By Lemma \ref{lem:basic-properties-atoms-units}\ref{it:lem:basic-properties-atoms-units(iii)}, $1_H$ cannot be expressed as a non-empty product of atoms of $H$. This yields that, for all $x, y \in H$,
$\LLs_H(x) + \LLs_H(y) \subseteq \LLs_H(xy)$
and $\sup \LLs_H(x) + \sup \LLs_H(y) \le \sup \LLs_H(xy)$.
\end{remark}
We let $\LLc(H) := \{\LLs_H(x): x \in H\} \setminus \{\emptyset\} \subseteq \PPc(\mathbf N)$. We refer to $\LLc(H)$ as the \textit{system of sets of lengths} of $H$.
Then, for each $k \in \mathbf N$ we denote by $\UUc_k(H)$ the union of all $L \in \mathscr{L}(H)$ with $k \in L$.
It is clear that $\mathscr{U}_0(H) = \{0\}$; and if $\mathscr{A}(H)$ is non-empty then $\mathscr{U}_1(H) = \{1\}$ and $k \in \mathscr{U}_k(H)$ for all $k \in \mathbf N$, otherwise $\mathscr{U}_1(H) = \mathscr{U}_2(H) = \cdots = \emptyset$.

We take $\Delta(H) := \bigcup_{L \in \LLc(H)} \Delta(L)$, where for $L \subseteq \mathbf{Z}$ we let $\Delta(L)$ be the
set of all $d \in \mathbf N^+$ such that $L \cap \llb l, l+d\fixed[0.2]{\text{ }} \rrb = \{l, l+d\}$ for some $l \in L$. We call $\Delta(H)$ the \textit{set of distances} (or \textit{delta set}) of $H$.

Sets of lengths, along with a number of invariants derived from them (e.g., unions of sets of lengths and sets of distances), are by and large the best tools so far available to describe the arithmetic of \BF-monoids, see \cite{Ge16c} for further discussion on this point.

\subsubsection{Factorizations}
\label{sec:factorizations}
We let $\pi_H$ be the unique monoid homomorphism $\mathscr{F}^\ast(H) \to H$ such that $\pi_H(x) = x$ for all $x \in H$,
and $\mathscr{C}_H$ the smallest monoid con\-gru\-ence on $\mathscr{F}^\ast(\mathscr{A}(H))$ for which the following holds:
\begin{enumerate}[label={$\bullet$}]
\item If $\mathfrak a = a_1 \ast \cdots \ast a_m$ and $\mathfrak b = b_1 \ast \cdots \ast b_n$ are, re\-spec\-tive\-ly, non-empty $\mathscr{A}(H)$-$\fixed[0.2]{\text{ }}$words of length $m$ and $n$, then $(\mathfrak a, \mathfrak b) \in \mathscr{C}_H$ if and only if $\pi_H(\mathfrak a) = \pi_H(\mathfrak b)$, $m = n$, and $a_1 \simeq_H b_{\sigma(1)}, \ldots, a_n \simeq_H b_{\sigma(n)}$ for some permutation $\sigma \in \mathfrak S_n$.
\end{enumerate}
We call $\pi_H$ the \textit{factorization homomorphism} of $H$, and the quotient $\mathsf Z(H) := \mathscr{F}^\ast(\mathscr{A}(H))/\mathscr{C}_H$ the \textit{factorization monoid} of $H$. We continue denoting the operation of $\mathsf{Z}(H)$ by the same symbol as the operation of $\mathscr{F}^\ast(\mathscr{A}(H))$, and we observe that,
if $H$ is a reduced commutative monoid and $\mathfrak a = a_1 \ast \cdots \ast a_n$ is a non-empty $\mathscr{A}(H)$-word of length $n$, then
$$
\llb \mathfrak a \rrb_{\mathscr{C}_H} = \bigl\{a_{\sigma(1)} \ast \cdots \ast a_{\sigma(n)}: \sigma \in \mathfrak{S}_n\bigr\}.
$$
Accordingly, we abuse notation and identify $\llb \mathfrak a \rrb_{\mathscr{C}_H}$ with $\mathfrak a$ whenever $H$ is commutative and $H^\times = \{1_H\}$.

Also, we notice that $\pi_H(\mathcal A) = \{\pi_H(\mathfrak a)\}$ for all $\mathcal A \in \mathsf Z(H)$ and $\mathfrak a \in \mathcal A$, and we define, for every $x \in H$,
$$
\mathsf{Z}_H(x) := \fixed[-0.15]{\text{ }}\bigl\{\llb \mathfrak a \rrb_{\mathscr{C}_H}: \mathfrak a \in \mathscr{F}^\ast(\mathscr{A}(H)) \text{ and } \pi_H(\mathfrak a) = x\bigr\} \fixed[-0.15]{\text{ }} \subseteq \mathsf{Z}(H)
$$
and
\begin{equation*}
\label{equ:factorizations-of-x}
\mathcal{Z}_H(x) := \pi_H^{-1}(x) \cap \mathscr{F}^\ast(\mathscr{A}(H)) = \bigcup \mathsf{Z}_H(x) \subseteq \mathscr{F}^\ast(\mathscr{A}(H)).
\end{equation*}
From here it is easy to see that
\begin{equation}
\label{equ:relation-lengths-factorization-monoid}
{\sf L}_H(x) = \bigl\{\|\mathfrak a\|_H: \mathfrak a \in \mathcal{Z}_H(x)\bigr\}, \text{ for all } x \in H.
\end{equation}
We refer to the elements of $\mathsf Z_H(x)$ as the \textit{factorization classes} of $x$, and to the $\mathscr{A}(H)$-words in $\mathcal{Z}_H(x)$ as the \textit{factorizations} of $x$. Then we have the following:
\begin{lemma}\label{lem:products-of-factorizations}
	Let $H$ be a monoid, and pick $x \in H \setminus \mathscr{A}(H)$ such that $x \ne 1_H$. Then
	\begin{equation}\label{equ:Z=Z'}
	\mathcal{Z}_H(x) = \bigcup_{y,\fixed[0.1]{\text{ }} z \fixed[0.15]{\text{ }}\in H \setminus H^\times:\fixed[0.25]{\text{ }} x \fixed[0.15]{\text{ }} = \fixed[0.15]{\text{ }} yz} \{\mathfrak{a} \ast \mathfrak{b}: (\mathfrak{a}, \mathfrak{b}) \in \mathcal{Z}_H(y) \times \mathcal{Z}_H(z)\}.
	\end{equation}
\end{lemma}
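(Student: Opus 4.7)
The plan is to prove the two inclusions of \eqref{equ:Z=Z'} separately, both by a direct set-chase argument that rests on nothing more than the definition of $\mathcal Z_H$, the fact that $\pi_H$ is a monoid homomorphism, and Lemma \ref{lem:basic-properties-atoms-units}.

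For the easier inclusion $\supseteq$, I would fix $y, z \in H \setminus H^\times$ with $x = yz$ and pick factorizations $\mathfrak a \in \mathcal Z_H(y)$ and $\mathfrak b \in \mathcal Z_H(z)$. Then $\mathfrak a \ast \mathfrak b$ lies in $\mathscr F^\ast(\mathscr A(H))$ by closure of the free monoid under concatenation, and $\pi_H(\mathfrak a \ast \mathfrak b) = \pi_H(\mathfrak a)\pi_H(\mathfrak b) = yz = x$, so $\mathfrak a \ast \mathfrak b \in \mathcal Z_H(x)$. This half uses neither of the hypotheses on $x$.

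For the inclusion $\subseteq$, pick $\mathfrak z \in \mathcal Z_H(x)$. The assumption $x \ne 1_H$ forces $\mathfrak z$ to be non-empty, so $\mathfrak z = a_1 \ast \cdots \ast a_n$ for some $a_1, \ldots, a_n \in \mathscr A(H)$ with $n \ge 1$; and the assumption $x \notin \mathscr A(H)$ rules out $n = 1$ (as otherwise $x = a_1$ would be an atom), so $n \ge 2$. I would then split $\mathfrak z = \mathfrak a \ast \mathfrak b$ with $\mathfrak a := a_1$ and $\mathfrak b := a_2 \ast \cdots \ast a_n$, and set $y := \pi_H(\mathfrak a)$, $z := \pi_H(\mathfrak b)$. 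By construction $yz = x$, $\mathfrak a \in \mathcal Z_H(y)$, and $\mathfrak b \in \mathcal Z_H(z)$, so the only remaining verification is that $y, z \notin H^\times$.

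The only (very mild) obstacle is precisely this last check, and it is handled by Lemma \ref{lem:basic-properties-atoms-units}\ref{it:lem:basic-properties-atoms-units(iii)} (equivalently, Remark \ref{rem:factorizations-of-1H}): since $\mathscr A(H) \ne \emptyset$, no unit admits a non-empty factorization into atoms, so any non-empty $\mathscr A(H)$-word maps under $\pi_H$ to an element of $H \setminus H^\times$. Applied to the non-empty words $\mathfrak a$ and $\mathfrak b$, this yields $y, z \in H \setminus H^\times$, completing the inclusion and hence the proof.
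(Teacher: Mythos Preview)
Your proof is correct and follows essentially the same approach as the paper's: both verify the easy inclusion directly, then split an arbitrary factorization of length $\ge 2$ into a single atom and the rest, invoking Lemma~\ref{lem:basic-properties-atoms-units} to ensure the two pieces map to non-units. The only cosmetic differences are that the paper peels off the last atom rather than the first and cites part~\ref{it:lem:basic-properties-atoms-units(i)} of Lemma~\ref{lem:basic-properties-atoms-units} instead of part~\ref{it:lem:basic-properties-atoms-units(iii)}/Remark~\ref{rem:factorizations-of-1H} for the non-unit check.
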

\begin{proof}
	Let $\mathcal{Z}_H^\prime(x)$ denote the set on the right-hand side of equation \eqref{equ:Z=Z'}. It is clear that $\mathcal{Z}_H^\prime(x) \subseteq \mathcal{Z}_H(x)$. As for the opposite inclusion, this is obvious if $\mathcal{Z}_H(x) = \emptyset$. Otherwise, $\mathcal{Z}_H(x)$ is a non-empty subset of $\mathscr{F}^\ast(\mathscr{A}(H)) \setminus \bigl\{1_{\mathscr{F}^\ast(\mathscr{A}(H))}\bigr\}$. Accordingly, let $\mathfrak a := a_1 \ast \cdots \ast a_n \in \mathcal{Z}_H(x)$. Then $n \ge 2$ (since $x$ is not an atom), and hence $\mathfrak a = \mathfrak b \ast \mathfrak c$, where $\mathfrak b := a_1 \ast \cdots \ast a_{n-1}$ and $\mathfrak c := a_n$ are non-empty $\mathscr{A}(H)$-words. But this implies $\mathfrak a \in \mathcal{Z}_H^\prime(x)$, because it is evident from the above that $\mathscr A(H)$ is non-empty, and therefore $y := \pi_H(\mathfrak b)$ and $z := \pi_H(\mathfrak c)$ are non-units of $H$, by Lemma \ref{lem:basic-properties-atoms-units}\ref{it:lem:basic-properties-atoms-units(i)}.
\end{proof}
Now we take a break for some highlights, to put things in perspective and contrast our approach to the study of the arithmetic of monoids with what has been done so far in the existing body of literature.
\begin{remark}
\label{remark:comparison-factorization-monoids}
Our definition of the factorization monoid $\mathsf Z(H)$ is, in general, inconsistent with
analogous definitions from the literature on factorization theory, and it is probably useful to explain why
this inconsistency is not necessarily bad.

Our terms for comparison will be the classical definition of the factorization monoid
(for the case when $H$ is commutative and cancellative) and Smertnig's definition of the monoid of rigid factorizations (for cancellative monoids), for which we use, respectively, the notation $\mathsf Z_{\rm GeH}(H)$ and ${\sf Z}_{\rm Sm}(H)$, and we refer, respectively, to \cite[Definition 1.2.6]{GeHK06} and \cite[\S{ }3]{Sm13} (see also Remarks \ref{rem:permutable_factorizations} and \S{ }\ref{sec:distances}).

To start with, it is worth stressing that a ``full comparison'' between $\mathsf Z_{\rm GeH}(H)$ and $\mathsf Z(H)$, whatever it may mean, is just impossible. Not only because $\mathsf Z_{\rm GeH}(H)$ is not defined for non-commutative monoids (cancellativity has no active role in this regard, see \cite[\S{ }3]{FGKT}),
but also, and more importantly, because there seems to be no meaningful way to carry over the definition of $\mathsf Z_{\rm GeH}(H)$ to a non-commutative setting: $\mathsf Z_{\rm GeH}(H)$ is the free \textit{abelian} monoid with basis $\mathscr{A}(H_{\rm red})$, where $H_{\rm red}$ is the quotient $H/H^\times$. Thus, a naive attempt to generalize the classical definition to the case when $H$ may not be commutative, would be to take the quotient of $H$ by the monoid congruence $\mathscr{C}_{\rm red}$ generated by the relation $\simeq_H$ and to let the factorization monoid of $H$ equal to $\mathscr{F}^\ast(\mathscr{A}(H / \mathscr{C}_{\rm red}))$.
But this approach has a major drawback: If $H$ is commutative, then $\mathscr{C}_{\rm red}$ and $\simeq_H$ coincide. Otherwise, $\simeq_H$ need not be a congruence and $\mathscr{C}_{\rm red}$ can be ``much larger'' than $\simeq_H$,
with the result that $H /\mathscr{C}_{\rm red}$ is ``too small'' for carrying any interesting information about the arithmetic of $H$ (cf. \cite[Remarks 3.3.1]{Sm13}).

In a similar vein, a full comparison between $\mathsf Z_{\rm Sm}(H)$ and $\mathsf Z(H)$ is also unfeasible, since the definition of $\mathsf Z_{\rm Sm}(H)$ is phrased in the language of categories, while the present paper is entirely focused on monoids (though a large part of this section can be abstracted to the level of categories without much trouble).

So, we have no choice but to restrict the comparison between $\mathsf Z_{\rm GeH}(H)$ and $\mathsf Z(H)$ to the commutative setting, and the comparison between $\mathsf Z_{\rm Sm}(H)$ and $\mathsf Z(H)$ to the case when the former is specialized to monoids (no further comment will be made on this point in the sequel).
\vskip 0.1cm
\textit{Round 1:} $\mathsf Z_{\rm GeH}(H)$ vs $\mathsf Z(H)$. Assume that $H$ is commutative, and denote by $\mathscr{C}_H^{\fixed[0.2]{\text{ }}\prime}$
  the smallest monoid con\-gru\-ence on $\mathscr{F}^\ast(\mathscr{A}(H))$ for which the following holds:
  \begin{itemize}
  \item If $\mathfrak a = a_1 \ast \cdots \ast a_m$ and $\mathfrak b = b_1 \ast \cdots \ast b_n$ are, re\-spec\-tive\-ly, non-empty $\mathscr{A}(H)$-words of length $m$ and $n$, then $(\mathfrak a, \mathfrak b) \in \mathscr{C}_H^{\fixed[0.2]{\text{ }}\prime}$ if and only if $m = n$ and $a_1 \simeq_H b_{\sigma(1)}, \ldots, a_n \simeq_H b_{\sigma(n)}$ for some $\sigma \in \mathfrak{S}_n$.
  \end{itemize}
  It is readily checked that $\mathsf Z_{\rm GeH}(H)$ is isomorphic (as a monoid) to the quotient $\mathsf Z_{\rm GeH}^{\fixed[0.2]{\text{ }} \prime}(H) := \mathscr{F}^\ast(\mathscr{A}(H))/\mathscr{C}_H^{\fixed[0.2]{\text{ }}\prime}$. Therefore, rather than comparing $\mathsf Z(H)$ with $\mathsf Z_{\rm GeH}(H)$, we may compare the former with $\mathsf Z_{\rm GeH}^{\fixed[0.2]{\text{ }} \prime}(H)$, which has practical advantages.

  In particular, there is a unique homomorphism $\pi_{\rm GeH}: \mathsf Z_{\rm GeH}^{\fixed[0.2]{\text{ }} \prime}(H) \to H_{\rm red}$ such that $\pi_{\rm GeH}\bigl(\llb a \rrb_{\mathscr{C}_H^{\fixed[0.2]{\text{ }}\prime}}\bigr) = aH^\times$ for all $a \in \mathscr{A}(H)$, and for every $x \in H$ we can identify the elements of the set
  $$
  \mathsf{Z}_{\rm GeH}^{\fixed[0.2]{\text{ }} \prime}(x) := \pi_{\rm GeH}^{-1}(xH^\times) \subseteq \mathsf Z_{\rm GeH}^{\fixed[0.2]{\text{ }}\prime}(H)
  $$
  with the factorizations of $x$ in the sense of \cite[Definition 1.2.6]{GeHK06}. So, taking
  \begin{equation*}
  \label{equ:classical-factorizations-of-x}
  \mathcal{Z}_{\rm GeH}^{\fixed[0.2]{\text{ }}\prime}(x) := \bigcup \mathsf{Z}_{\rm GeH}(x) \subseteq \mathscr{F}^\ast(\mathscr{A}(H))
  \end{equation*}
  and calling the $\mathscr{A}(H)$-words in $\mathcal{Z}_{\rm GeH}^{\fixed[0.2]{\text{ }}\prime}(x)$ the \textit{classical factorizations} of $x$, we end up with the conclusion that, in the multiplicative monoid of the ring of integers, the $\mathscr{A}(\mathbf P)$-words $2 \ast (-3)$ and $2 \ast 3$, where $\mathbf P$ is the set of rational primes, are both classical factorizations of $6$. Of course, there is nothing wrong or paradoxical with this inference (it is just the consequence of some definitions), though we do not find it very natural and nothing similar happens with our definitions.

  Indeed, $\mathcal{Z}_H(1_H) = \mathcal{Z}_{\rm GeH}^{\fixed[0.2]{\text{ }}\prime}(x) = \fixed[-0.2]{\text{ }} \bigl\{1_{\mathscr{F}^\ast(\mathscr{A}(H))}\bigr\}$ for $x \in H^\times$, and $\mathcal{Z}_H(x) = \emptyset$ for $x \in H^\times \setminus \{1_H\}$. Also, if $\mathfrak a = a_1 \ast \cdots \ast a_n$ is a non-empty $\mathscr{A}(H)$-word of length $n$ and $x = \pi_H(\mathfrak a)$, then
  \begin{equation}
  \label{equ:congr-class-commutative-our-approach}
  \llb \mathfrak a \rrb_{\mathscr{C}_H} = \fixed[-0.2]{\text{ }} \bigl\{\fixed[-0.1]{\text{ }}\bigl(a_{\sigma(1)} u_1\bigr)\fixed[-0.1]{\text{ }} \ast \cdots \ast \fixed[-0.1]{\text{ }}\bigl(a_{\sigma(n)} u_n\bigr)\fixed[-0.1]{\text{ }}: \sigma \in \mathfrak{S}_n,\ u_1, \ldots, u_n \in H^\times, \text{ and }u_1 \cdots u_n x = x\fixed[-0.1]{\text{ }}\bigr\}
  \end{equation}
  and
  \begin{equation*}
  \label{equ:congr-class-commutative-classical-approach}
  \llb \mathfrak a \rrb_{\mathscr{C}_H^\prime} = \fixed[-0.2]{\text{ }} \bigl\{\fixed[-0.1]{\text{ }}\bigl(a_{\sigma(1)} u_1\bigr)\fixed[-0.1]{\text{ }} \ast \cdots \ast \fixed[-0.1]{\text{ }}\bigl(a_{\sigma(n)} u_n\bigr)\fixed[-0.1]{\text{ }}: \sigma \in \mathfrak{S}_n \text{ and }u_1, \ldots, u_n \in H^\times\fixed[-0.1]{\text{ }}\bigr\}\fixed[0.1]{\text{ }}.
  \end{equation*}
  It follows that $\llb \mathfrak a \rrb_{\mathscr{C}_H} \subseteq \llb \mathfrak a \rrb_{\mathscr{C}_H^\prime}$, and the inclusion is strict if, for instance, $H$ is strongly unit-cancellative, but not reduced. The point is simply that $\mathscr{C}_H \subseteq \mathscr{C}_H^{\fixed[0.2]{\text{ }}\prime}$, and in general we do not have equality.

  In other terms, ${\sf Z}_{\rm GeH}^{\fixed[0.2]{\text{ }} \prime}(H)$ is ``coarser'' than ${\sf Z}(H)$,
  in the sense that the former embeds (as a monoid) into the latter, but the embedding is an isomorphism if and only if $H$ is reduced.
\vskip 0.1cm
\textit{Round 2:} $\mathsf Z_{\rm Sm}(H)$ vs $\mathsf Z(H)$. In the case of monoids, Smertnig's definition of ${\sf Z}_{\rm Sm}(H)$ comes down to the following: Denote by $\circ$ the binary operation on the set $\mathscr{F}_{\rm Sm}(H) := H^\times \fixed[-0.1]{\text{ }} \times \mathscr{F}^\ast(\mathscr{A}(H))$ given by
\begin{equation*}
((u, \mathfrak a), (v, \mathfrak b)) \mapsto
\left\{
\begin{array}{ll}
\!\! (u, \mathfrak a \fixed[0.2]{\text{ }} a_n^{-1} \ast (a_nv) \ast \mathfrak b)
  & \text{if } n := \|\mathfrak a\|_H \ge 1 \text{ and }\mathfrak a = a_1 \ast \cdots \ast a_n \\
\!\! (uv, \mathfrak b) & \text{otherwise}
\end{array}
\right.\!\!\!,
\end{equation*}
which is well defined by Lemma \ref{lem:basic-properties-atoms-units}\ref{it:lem:basic-properties-atoms-units(ii)} (Smertnig's original definition is restricted to the can\-cel\-la\-tive setting, where the well-definedness of $\circ$ is trivial).
Note that the pair $(\mathscr{F}_{\rm Sm}(H), \circ\fixed[0.1]{\text{ }})$ is a monoid. Accordingly, let $\mathscr{C}_{\rm Sm}$ be the smallest monoid congruence on $(\mathscr{F}_{\rm Sm}(H), \circ \fixed[0.1]{\text{ }})$ determined by the following:
\begin{itemize}
\item If $u, v \in H^\times$, and $\mathfrak a = a_1 \ast \cdots \ast a_m$ and $\mathfrak b = b_1 \ast \cdots \ast b_n$ are, respectively, non-empty $\mathscr{A}(H)$-words of length $m$ and $n$, then $((u, \mathfrak a), (v, \mathfrak b)) \in \mathscr{C}_{\rm Sm}$ if and only if $u \fixed[0.2]{\text{ }} \pi_H(\mathfrak a) = v \fixed[0.2]{\text{ }} \pi_H(\mathfrak b)$, $m = n$, and there exist $\varepsilon_1, \ldots, \varepsilon_n \in H^\times$ with $\varepsilon_n = 1_H$, $u \fixed[0.2]{\text{ }} a_1 = v \fixed[0.15]{\text{ }} b_1 \fixed[0.1]{\text{ }} \varepsilon_1^{-1}$, and $a_i = \varepsilon_{i-1} b_i \fixed[0.15]{\text{ }} \varepsilon_i^{-1}$ for $i \in \llb 2, n \rrb$.
\end{itemize}
In fact, $\mathsf{Z}_{\rm Sm}(H)$ is the quotient of $(\mathscr{F}_{\rm Sm}(H),\circ \fixed[0.1]{\text{ }})$ by the congruence $\mathscr{C}_{\rm Sm}$. In particular, if $H$ is reduced, then $\mathsf{Z}_{\rm Sm}(H) \cong \mathscr{F}^\ast(\mathscr{A}(H))$. So, contrary to what happens with $\mathsf Z(H)$, $\mathsf Z_{\rm Sm}(H)$ is not even isomorphic to $\mathsf Z_{\rm GeH}(H)$ when $H$ is reduced and commutative, cf. \cite[p. 492]{BaSm}.
Nevertheless, there are strong sim\-i\-lar\-i\-ties between the constructions of $\mathsf{Z}_{\rm Sm}(H)$ and $\mathsf{Z}(H)$, which will be further clarified by Remark \ref{rem:permutable_factorizations}.

First, both constructions involve, through the definition of the congruences $\mathscr{C}_{\rm Sm}$ and $\mathscr{C}_H$, a con\-di\-tion (in terms of the homomorphism $\pi_H$) that rules out the ``issues'' pointed out in the above in reference to the classical factorizations in the commutative setting.

Secondly, both agree on the role of $\mathscr{F}^\ast(\mathscr{A}(H))$ and the idea that factorizations, whatever they may be, are related to the quotient of $\mathscr{F}^\ast(\mathscr{A}(H))$, or something as close to $\mathscr{F}^\ast(\mathscr{A}(H))$ as $\mathscr{F}_{\rm Sm}(H)$, by a suitable congruence. But while $\mathsf{Z}_{\rm Sm}(H)$ brings in ``the $H^\times$ factor {(...)} to represent trivial factorizations of units'' (to quote Smertnig's own words from \cite[Remark 3.3.1]{Sm13}), we brush off the trivial factorizations of a unit $u \ne 1_H$ from our approach: 
This leads to a simplification of the theory,
without causing any significant loss (cf. Remarks \ref{rem:where-is-0} and \ref{rem:factorizations-of-1H}).
\end{remark}
\begin{remark}
\label{rem:permutable_factorizations}
The factorization monoid $\mathsf Z(H)$ is \textit{essentially} the same as Baeth and Smertnig's monoid, $\mathsf{Z}_p(H)$, of permutable factorizations: This may not be immediately apparent, but it follows from Lemma \ref{lem:basic-properties-atoms-units}\ref{it:lem:basic-properties-atoms-units(ii)} and a careful reading of \cite[Construction 3.3(2), Definitions 3.4(2) and 3.8(2), and Remark 3.9(2)]{BaSm}. In the notations and terminology of Remark \ref{remark:comparison-factorization-monoids}, $\mathsf{Z}_p(H)$ is, in fact, the quotient of $\mathsf Z_{\rm Sm}(H)$ by the smallest monoid congruence $\sim_p$ on $\mathsf Z_{\rm Sm}(H)$ for which the following holds:
\begin{itemize}
\item If $u, v \in H^\times$, and $\mathfrak a = a_1 \ast \cdots \ast a_m$ and $\mathfrak b = b_1 \ast \cdots \ast b_n$ are non-empty $\mathscr{A}(H)$-words of length $m$ and $n$, respectively, then $\llb (u, \mathfrak a) \rrb_{\mathscr{C}_{\rm Sm}} \sim_p \llb (v, \mathfrak b) \rrb_{\mathscr{C}_{\rm Sm}}$ if and only if $u \fixed[0.2]{\text{ }} \pi_H(\mathfrak a) = v \fixed[0.2]{\text{ }} \pi_H(\mathfrak b)$, $m = n$, and there exists $\sigma \in \mathfrak S_n$ such that $a_i \simeq_H b_{\sigma(i)}$ for all $i \in \llb 1, n \rrb$.
\end{itemize}
Consequently, $\mathsf Z(H)$ is monoid isomorphic to $\mathsf Z_p(H) \setminus \mathcal C$, where $\mathcal C$ is the the set of all congruence classes in $\mathsf Z_p(H)$ corresponding to a rigid factorization of the form $\bigl\llb \bigl(u, 1_{\mathscr{F}^\ast(\mathscr{A}(H))}\bigr) \bigr\rrb_{\mathscr{C}_{\rm Sm}}$ with $u \in H^\times \setminus \{1_H\}$. In particular, we have a monoid isomorphism between $\mathsf Z(H)$ and $\mathsf Z_p(H)$ if and only if $H$ is reduced. 
\end{remark}
\subsubsection{Distances and catenary degree}
\label{sec:distances}
Let $\mathsf{d}$ be a function $\mathscr{F}^\ast(\mathscr{A}(H)) \times \mathscr F^\ast(\mathscr{A}(H)) \to \mathbf R$. We say that $\sf d$ is a \textit{\textup{(}global\textup{)} distance} (on $H$) if, for all $\mathfrak a, \mathfrak b, \mathfrak c \in \mathscr{F}^\ast(\mathscr{A}(H))$, the following hold:
\begin{enumerate}[label={\rm (\textsc{d}\arabic{*})}]
	\item\label{it:distance-definition(i)} $\mathsf d(\mathfrak a, \mathfrak b) = 0$ whenever $(\mathfrak a, \mathfrak b) \in \mathscr{C}_H$.
	\item\label{it:distance-definition(ii)} $\mathsf d(\mathfrak a, \mathfrak b) = \mathsf d(\mathfrak b, \mathfrak a)$.
	\item\label{it:distance-definition(iii)} $\mathsf d(\mathfrak a, \mathfrak b) \le \mathsf d(\mathfrak a, \mathfrak c) + \mathsf d(\mathfrak c, \mathfrak b)$.
	\item\label{it:distance-definition(v)} $\bigl|\|\mathfrak a\|_H - \|\mathfrak b\|_H\bigr| \fixed[-0.1]{\text{ }} \le \mathsf d(\mathfrak a, \mathfrak b) \le \max\bigl(\|\mathfrak a\|_H, \|\mathfrak b\|_H\bigr)$.
\end{enumerate}
We refer to $\mathsf d$ as a $\mathscr{C}_H$-\textit{metric} if it is a distance and, in addition, $\mathsf d(\mathfrak a, \mathfrak b) = 0$ for some $\mathfrak a, \mathfrak b \in \mathscr F^\ast(\mathscr{A}(H))$ implies $(\mathfrak a, \mathfrak b) \in \mathscr{C}_H$. Moreover, we call $\mathsf d$ \textit{subinvariant} (on $H$) if we have:
\begin{enumerate}[label={\rm (\textsc{d}\arabic{*})}, resume]
	\item\label{it:distance-definition(iv)} $\mathsf d(\mathfrak c \ast \mathfrak a \ast \mathfrak d, \mathfrak c \ast \mathfrak b \ast \mathfrak d) \le \mathsf d(\mathfrak a, \mathfrak b)$ for all $\mathfrak a, \mathfrak b, \mathfrak c, \mathfrak d \in \mathscr F^\ast(\mathscr{A}(H))$.
\end{enumerate}
In a similar vein, we say that $\mathsf d$ is \textit{invariant} (on $H$) if \ref{it:distance-definition(iv)} holds with equality, namely:
\begin{enumerate}[label={\rm (\textsc{d}\arabic{*})}, resume]
	\item\label{it:distance-definition(vi)} $\mathsf d(\mathfrak c \ast \mathfrak a \ast \mathfrak d, \mathfrak c \ast \mathfrak b \ast \mathfrak d) = \mathsf d(\mathfrak a, \mathfrak b)$ for all $\mathfrak a, \mathfrak b, \mathfrak c, \mathfrak d \in \mathscr F^\ast(\mathscr{A}(H))$.
\end{enumerate}
Lastly, we take $\mathsf d$ to be \textit{locally invariant} (on $H$) if it is subinvariant and
\begin{enumerate}[label={\rm (\textsc{d}\arabic{*})}, resume]
	\item\label{it:distance-definition(vii)} $\mathsf d(\mathfrak c \ast \mathfrak a \ast \mathfrak d, \mathfrak c \ast \mathfrak b \ast \mathfrak d) = \mathsf d(\mathfrak a, \mathfrak b)$ for all $\mathfrak a, \mathfrak b, \mathfrak c, \mathfrak d \in \mathscr F^\ast(\mathscr{A}(H))$ with $\pi_H(\mathfrak a) = \pi_H(\mathfrak b)$.
\end{enumerate}
\begin{remark}
\label{rem:modelled-after}
The above definitions are all modeled after \cite[Definition 3.2]{BaSm}, where $\mathscr{F}^\ast(\mathscr{A}(H))$ is replaced by the category of rigid factorizations $\mathsf Z_{\rm Sm}(H)$, distances are all $\mathbf N$-valued and invariant, and the right-most inequality in \ref{it:distance-definition(v)} has a slightly different form, for the fact that $\mathsf Z_{\rm Sm}(H)$ is designed to include the trivial fac\-tor\-i\-za\-tions of the units of $H$ (see Remark \ref{remark:comparison-factorization-monoids} for notations and further details).
\end{remark}
The interest for subinvariant distances stems in part from the next lemma, which the reader may want to compare with points (1) and (2) of \cite[Lemma 3.7]{BaSm}.
\begin{lemma}
	\label{lem:subinvariant-distances}
	Let $H$ be a monoid and $\sf d$ a subinvariant distance on $H$. Then:
	\begin{enumerate}[label={\rm (\roman{*})}]
		\item\label{it:lem:subinvariant-distances(0)} $\mathsf d(\mathfrak a, \mathfrak b) = \mathsf d(\mathfrak c, \mathfrak d)$ for all $\mathfrak a, \mathfrak b, \mathfrak c, \mathfrak d \in \mathscr{F}^\ast(\mathscr{A}(H))$ with $(\mathfrak a, \mathfrak c), (\mathfrak b, \mathfrak d) \in \mathscr{C}_H$.
		\item\label{it:lem:subinvariant-distances(i)} $\mathsf d(\mathfrak a \ast \mathfrak c, \mathfrak b \ast \mathfrak d) \le \mathsf d(\mathfrak a, \mathfrak b) + \mathsf d(\mathfrak c, \mathfrak d)$ for all $\mathfrak a, \mathfrak b, \mathfrak c, \mathfrak d \in \mathscr{F}^\ast(\mathscr{A}(H))$.
		\item\label{it:lem:subinvariant-distances(ii)} The binary relation $\sim_{\sf d}$ on $\mathscr{F}^\ast(\mathscr{A}(H))$, defined by taking $\mathfrak a \sim_{\sf d} \mathfrak b$, for some $\mathfrak a, \mathfrak b \in \mathscr{F}^\ast(\mathscr{A}(H))$, if and only if $\pi_H(\mathfrak a) = \pi_H(\mathfrak b)$ and $\mathsf d(\mathfrak a, \mathfrak b) = 0$, is a monoid congruence, with $\llb \mathfrak c \rrb_{\mathscr{C}_H} \subseteq \llb \mathfrak c \rrb_{\sim_{\sf d}}$ for every $\mathfrak c \in \mathscr{F}^\ast(\mathscr{A}(H))$.
	\end{enumerate}
\end{lemma}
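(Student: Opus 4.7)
The plan is to dispatch the three parts in order, using the axioms (\textsc{d}1)--(\textsc{d}5) (with (\textsc{d}5) denoting subinvariance). A preliminary observation I would use throughout is that any $\mathbf R$-valued map $\sf d$ satisfying (\textsc{d}1)--(\textsc{d}3) is automatically non-negative, since $0 = \mathsf d(\mathfrak a, \mathfrak a) \le \mathsf d(\mathfrak a, \mathfrak b) + \mathsf d(\mathfrak b, \mathfrak a) = 2 \mathsf d(\mathfrak a, \mathfrak b)$ for all $\mathfrak a, \mathfrak b \in \mathscr{F}^\ast(\mathscr{A}(H))$.

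For part (i), axiom (\textsc{d}1) gives $\mathsf d(\mathfrak a, \mathfrak c) = \mathsf d(\mathfrak b, \mathfrak d) = 0$; two nested applications of the triangle inequality (\textsc{d}3), combined with the symmetry (\textsc{d}2), then yield
\[
\mathsf d(\mathfrak a, \mathfrak b) \le \mathsf d(\mathfrak a, \mathfrak c) + \mathsf d(\mathfrak c, \mathfrak d) + \mathsf d(\mathfrak d, \mathfrak b) = \mathsf d(\mathfrak c, \mathfrak d),
\]
and swapping the roles of the two pairs delivers the reverse inequality. For part (ii), I would use $\mathfrak b \ast \mathfrak c$ as an intermediate point: by (\textsc{d}3),
\[
\mathsf d(\mathfrak a \ast \mathfrak c, \mathfrak b \ast \mathfrak d) \le \mathsf d(\mathfrak a \ast \mathfrak c, \mathfrak b \ast \mathfrak c) + \mathsf d(\mathfrak b \ast \mathfrak c, \mathfrak b \ast \mathfrak d),
\]
and two invocations of subinvariance (\textsc{d}5)---first with empty prefix and suffix $\mathfrak c$, then with prefix $\mathfrak b$ and empty suffix---bound the two summands by $\mathsf d(\mathfrak a, \mathfrak b)$ and $\mathsf d(\mathfrak c, \mathfrak d)$, respectively.

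For part (iii), I would first check that $\sim_{\sf d}$ is an equivalence relation: reflexivity follows from (\textsc{d}1) applied to the diagonal of $\mathscr{C}_H$, symmetry from (\textsc{d}2), and transitivity from (\textsc{d}3) together with the preliminary non-negativity remark. Compatibility with the operation $\ast$ is then immediate from part (ii) and the fact that $\pi_H$ is a monoid homomorphism, so $\sim_{\sf d}$ is a monoid congruence. The class inclusion $\llb \mathfrak c \rrb_{\mathscr{C}_H} \subseteq \llb \mathfrak c \rrb_{\sim_{\sf d}}$ then reduces to showing that $(\mathfrak a, \mathfrak c) \in \mathscr{C}_H$ implies both $\mathsf d(\mathfrak a, \mathfrak c) = 0$ and $\pi_H(\mathfrak a) = \pi_H(\mathfrak c)$; the former is exactly (\textsc{d}1), while the latter holds by minimality, since the binary relation $\{(\mathfrak a, \mathfrak b) \in \mathscr{F}^\ast(\mathscr{A}(H)) \times \mathscr{F}^\ast(\mathscr{A}(H)) : \pi_H(\mathfrak a) = \pi_H(\mathfrak b)\}$ is itself a monoid congruence containing every pair in the generating family of $\mathscr{C}_H$.

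The only mildly delicate step I anticipate is this very last one: the explicit ``if and only if'' description of $\mathscr{C}_H$ is stated only for pairs of \emph{non-empty} words, so the $\pi_H$-compatibility of arbitrary $\mathscr{C}_H$-related pairs has to be recovered via the minimality argument just sketched rather than read off the definition. Everything else is routine manipulation of the distance axioms, with part (ii) functioning as the key technical lemma that feeds the congruence verification in part (iii).
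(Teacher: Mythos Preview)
Your proposal is correct and follows essentially the same route as the paper: part (i) via (\textsc{d}1) plus the triangle inequality, part (ii) via the intermediate point $\mathfrak b \ast \mathfrak c$ and two applications of subinvariance, and part (iii) by combining these with the fact that $\pi_H$ is a homomorphism. Your explicit handling of non-negativity and of the $\pi_H$-compatibility for $\mathscr{C}_H$-related pairs is slightly more careful than the paper (which simply says ``the rest is trivial by \ref{it:distance-definition(i)}''), but there is no substantive difference in strategy.
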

\begin{proof}
	\ref{it:lem:subinvariant-distances(0)} Let $\mathfrak a, \mathfrak b, \mathfrak c \in \mathscr{F}^\ast(\mathscr{A}(H))$ with $(\mathfrak a, \mathfrak c) \in \mathscr{C}_H$. By \ref{it:distance-definition(ii)}, it is sufficient to show that $\mathsf d(\mathfrak a, \mathfrak b) \le \mathsf d(\mathfrak c, \mathfrak b)$, which is straightforward, because $\mathsf d(\mathfrak a, \mathfrak b) \le \mathsf d(\mathfrak a, \mathfrak c) + \mathsf d(\mathfrak c, \mathfrak b)$ by \ref{it:distance-definition(iii)} and $\mathsf d(\mathfrak a, \mathfrak c) = 0$ by \ref{it:distance-definition(i)}.
	
	\ref{it:lem:subinvariant-distances(i)} Recall from the above that $\mathsf d$ is non-negative. Then, consider that, for all $\mathfrak a, \mathfrak b, \mathfrak c, \mathfrak d \in \mathscr{F}^\ast(\mathscr{A}(H))$,
	$$
	\mathsf d(\mathfrak a \ast \mathfrak c, \mathfrak b \ast \mathfrak d) \fixed[-0.5]{\text{ }} \stackrel{\ref{it:distance-definition(iii)}}{\le} \fixed[-0.5]{\text{ }} \mathsf d(\mathfrak a \ast \mathfrak c, \mathfrak b \ast \mathfrak c) + \mathsf d(\mathfrak b \ast \mathfrak c, \mathfrak b \ast \mathfrak d) \fixed[-0.5]{\text{ }} \stackrel{\ref{it:distance-definition(iv)}}{\le} \fixed[-0.5]{\text{ }} \mathsf d(\mathfrak a, \mathfrak b) + \mathsf d(\mathfrak c, \mathfrak d),
	$$
	\ref{it:lem:subinvariant-distances(ii)} Let $\mathfrak a, \mathfrak b, \mathfrak c, \mathfrak d \in \mathscr{F}^\ast(\mathscr{A}(H))$. If $\mathfrak a \sim_{\sf d} \mathfrak b$ and $\mathfrak b \sim_{\sf d} \mathfrak c$, then $\mathsf d(\mathfrak a, \mathfrak c) \le \mathsf d(\mathfrak a, \mathfrak b) + \mathsf d(\mathfrak b, \mathfrak c) = 0$ by \ref{it:distance-definition(iii)}, whence it is easy to check that $\sim_{\sf d}$ is an equivalence relation. To show that $\sim_{\sf d}$ is actually a congruence, assume $\mathfrak a \sim_{\sf d} \mathfrak b$ and $\mathfrak c \sim_{\sf d} \mathfrak d$, i.e., $\pi_H(\mathfrak a) = \pi_H(\mathfrak b)$, $\pi_H(\mathfrak c) = \pi_H(\mathfrak d)$, and $\mathsf d(\mathfrak a, \mathfrak b) = \mathsf d (\mathfrak c, \mathfrak d) = 0$. We need to prove that $\mathfrak a \ast \mathfrak c \sim_{\sf d} \mathfrak b \ast \mathfrak d$, which is immediate, since, on the one hand, $\pi_H$ being a homomorphism $\mathscr{F}^\ast(H) \to H$ yields
	\begin{equation*}
	\label{equ:piH-of-sets}
	\pi_H(\mathfrak a \ast \mathfrak c) = \pi_H(\mathfrak a) \ast \pi_H(\mathfrak c) = \pi_H(\mathfrak b) \ast \pi_H(\mathfrak d) = \pi_H(\mathfrak b \ast \mathfrak d),
	\end{equation*}
	and on the other, we obtain from \ref{it:lem:subinvariant-distances(i)} that $
	\mathsf d(\mathfrak a \ast \mathfrak c, \mathfrak b \ast \mathfrak d) \le \mathsf d(\mathfrak a, \mathfrak b) + \mathsf d (\mathfrak c, \mathfrak d) = 0$. So $\sim_{\mathsf{d}}$ is a con\-gru\-ence, and the rest is trivial by \ref{it:distance-definition(i)}.
\end{proof}
By Lemma \ref{lem:subinvariant-distances}\ref{it:lem:subinvariant-distances(ii)}, every subinvariant distance $\mathsf d$ on $H$ yields a corresponding notion of factorization class, by looking at the quotient $\mathsf Z_{\sf d}(H)$ of $\mathscr{F}^\ast(\mathscr{A}(H))$ by the congruence $\sim_{\sf d}$, i.e., by identifying two words $\mathfrak a, \mathfrak b \in \mathscr{F}^\ast(\mathscr{A}(H))$ if and only if $\mathsf d(\mathfrak a, \mathfrak b) = 0$, cf. \cite[Definition 3.8(1)]{BaSm}.
However, we will not pursue this direction here, as it would take us too far from our main goals.
Instead,
we note that, by Lemma \ref{lem:subinvariant-distances}\ref{it:lem:subinvariant-distances(ii)}, $\mathsf Z_{\mathsf d}(H) = \mathsf Z(H)$ whenever $\mathsf d$ is a $\mathscr{C}_H$-metric, and we proceed to introduce \textit{the} distance we are going to use in the sequel of the paper and to show that it is, in fact, a $\mathscr{C}_H$-metric, cf. \cite[Proposition 1.2.5]{GeHK06}.

\begin{definition}
We set
$
\mathscr{A}^\ast(H) := \bigl\{H^\times a H^\times: a \in \mathscr{A}(H)\bigr\}$, and
given $\mathfrak A \in \mathscr{A}^\ast(H)$ and $\mathfrak z \in \mathscr{F}^\ast(H)$, we let
$$
\mathsf{v}_{H}(\mathfrak z\fixed[0.22]{\text{ }}; \mathfrak A) :=
\left\{
\begin{array}{ll}
\!\! \bigl|\bigl\{i \in \llb 1, n \rrb: z_i \in \mathfrak A \bigr\}\bigr| & \text{if }n := \|\mathfrak z\|_H \ge 1 \text{ and } \mathfrak z = z_1 \ast \cdots \ast z_n \\
\!\! 0 & \text{otherwise}
\end{array}
\right.\!\!\!,
$$
cf. \cite[Definition 1.1.9.1]{GeHK06}.
Then, for all $\mathfrak a, \mathfrak b \in \mathscr{F}^\ast(\mathscr{A}(H))$ we take
$$
\delta_H(\mathfrak a, \mathfrak b) :=
\left\{
\begin{array}{ll}
\!\! 0 & \text{if }\pi_H(\mathfrak a) = \pi_H(\mathfrak b) \\
\!\! \frac{1}{2} & \text{otherwise}
\end{array}
\right.
$$
and
\begin{equation}
\label{equ:definition-of-the-logical-and}
\mathfrak a \land_H \mathfrak b := {\max\bigr(\|\mathfrak a\|_H, \|\mathfrak b\|_H\bigl)} - {{\sum}_{\mathfrak A \in \mathscr{A}^\ast(H)} \min(\mathsf{v}_{H}(\mathfrak a\fixed[0.22]{\text{ }}; \mathfrak A), \mathsf{v}_{H}(\mathfrak b\fixed[0.22]{\text{ }}; \mathfrak A))}.
\end{equation}
Lastly, we let the \textit{matching distance} of $H$ be the function
$$
\mathsf{d}_H: \mathscr{F}^\ast(\mathscr{A}(H)) \times \mathscr{F}^\ast(\mathscr{A}(H)) \to \mathbf R: (\mathfrak a, \mathfrak b) \mapsto \max(\delta_H(\mathfrak a, \mathfrak b), \mathfrak a \wedge_H \mathfrak b).
$$
\end{definition}
It turns out that $\mathsf d_H$ provides a natural way to measure how different two factorizations of a fixed element are from each other, especially when related to our definition of the factorization monoid $\mathsf Z(H)$.
\begin{lemma}
	\label{lem:fundamental-lemma-for-distance}
	Let $H$ be a monoid and pick $\mathfrak a, \mathfrak b \in \mathscr{F}^\ast(\mathscr{A}(H))$. Then
	\begin{equation}
	\label{equ:identity-for-the-wedge}
	\mathfrak a \wedge_H \mathfrak b = \frac{1}{2}\fixed[0.2]{\text{ }}{\sum}_{\mathfrak{A} \in \mathscr{A}^\ast(H)} \bigl|
	\mathsf{v}_{H}(\mathfrak a\fixed[0.22]{\text{ }}; \mathfrak A) - \mathsf{v}_{H}(\mathfrak b\fixed[0.22]{\text{ }}; \mathfrak A)\bigr| + \frac{1}{2}\fixed[0.2]{\text{ }}\bigl|\|\mathfrak a\|_H - \|\mathfrak b\|_H\bigr|.
	\end{equation}
	In particular, if $(\mathfrak a, \mathfrak c), (\mathfrak b, \mathfrak d) \in \mathscr{C}_H$, then $\mathfrak a \wedge_H \mathfrak b = \mathfrak c \wedge_H \mathfrak d$.
\end{lemma}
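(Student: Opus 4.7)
The plan is to reduce the identity \eqref{equ:identity-for-the-wedge} to the elementary max/min identities
\[
\max(x,y) = \tfrac{1}{2}(x+y) + \tfrac{1}{2}|x-y|, \qquad \min(x,y) = \tfrac{1}{2}(x+y) - \tfrac{1}{2}|x-y|,
\]
valid for all real $x, y$. The one combinatorial ingredient I will need is that the set $\mathscr{A}^\ast(H) = \{H^\times a H^\times : a \in \mathscr{A}(H)\}$ of associate classes of atoms is a partition of $\mathscr{A}(H)$ (since $\simeq_H$ is an equivalence relation by the group structure on $H^\times$, cf.\ Lemma \ref{lem:basic-properties-atoms-units}\ref{it:lem:basic-properties-atoms-units(i)}). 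From this I immediately get
\[
\|\mathfrak z\|_H = {\sum}_{\mathfrak A \in \mathscr{A}^\ast(H)} \mathsf v_H(\mathfrak z\fixed[0.22]{\text{ }}; \mathfrak A) \qquad \text{for every } \mathfrak z \in \mathscr F^\ast(\mathscr A(H)),
\]
with all but finitely many summands equal to $0$.

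For the main identity, I would first apply the min identity to each term of the sum $\sum_{\mathfrak A} \min(\mathsf v_H(\mathfrak a\fixed[0.22]{\text{ }}; \mathfrak A), \mathsf v_H(\mathfrak b\fixed[0.22]{\text{ }}; \mathfrak A))$ appearing in \eqref{equ:definition-of-the-logical-and}, then collect $\sum_{\mathfrak A} \mathsf v_H(\mathfrak a\fixed[0.22]{\text{ }}; \mathfrak A) = \|\mathfrak a\|_H$ and similarly for $\mathfrak b$, and finally apply the max identity to $\max(\|\mathfrak a\|_H, \|\mathfrak b\|_H)$. The two halves of $(\|\mathfrak a\|_H + \|\mathfrak b\|_H)/2$ cancel, leaving precisely the right-hand side of \eqref{equ:identity-for-the-wedge}. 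This step is purely algebraic bookkeeping and I do not expect any real obstacle, provided I keep track of which sums are finite (they all are, since only finitely many $\mathfrak A$ contribute to $\mathfrak a$ or to $\mathfrak b$).

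For the ``in particular'' clause, the key observation is that $\mathscr C_H$-equivalent words are associate-class-preserving permutations of each other: by the definition of $\mathscr C_H$ in \S{}\ref{sec:factorizations}, if $(\mathfrak a, \mathfrak c) \in \mathscr C_H$ with $\mathfrak a = a_1 \ast \cdots \ast a_n$ and $\mathfrak c = c_1 \ast \cdots \ast c_n$, then there is $\sigma \in \mathfrak S_n$ with $a_i \simeq_H c_{\sigma(i)}$ for every $i$. Hence $a_i$ and $c_{\sigma(i)}$ belong to the same element of $\mathscr A^\ast(H)$, which gives $\mathsf v_H(\mathfrak a\fixed[0.22]{\text{ }}; \mathfrak A) = \mathsf v_H(\mathfrak c\fixed[0.22]{\text{ }}; \mathfrak A)$ for every $\mathfrak A \in \mathscr A^\ast(H)$; and of course $\|\mathfrak a\|_H = \|\mathfrak c\|_H = n$. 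The same holds for $(\mathfrak b, \mathfrak d)$. Plugging these equalities into the right-hand side of \eqref{equ:identity-for-the-wedge} (for the pair $(\mathfrak c, \mathfrak d)$) shows that it coincides with the value for $(\mathfrak a, \mathfrak b)$, completing the proof.

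The only mild care needed is the treatment of the empty-word case (when $\mathfrak a$ or $\mathfrak b$ equals $1_{\mathscr F^\ast(\mathscr A(H))}$), but by the definition of $\mathsf v_H$ this corresponds to the zero sequence $\mathsf v_H(\mathfrak a\fixed[0.22]{\text{ }};\cdot) \equiv 0$ and so is absorbed by the same computation. I do not anticipate a genuine obstacle here: the lemma is essentially the classical fact that the $\ell^1$-distance between two finite multisets equals twice the symmetric-difference count, dressed up with the length-discrepancy term.
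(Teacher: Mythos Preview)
Your proof is correct and, if anything, cleaner than the paper's. Both arguments rest on the same two ingredients---the partition identity $\|\mathfrak z\|_H = \sum_{\mathfrak A} \mathsf v_H(\mathfrak z; \mathfrak A)$ and the elementary formula $2\max(x,y) = x+y+|x-y|$---but they deploy them differently. The paper first permutes the letters of $\mathfrak a$ and $\mathfrak b$ so that a maximal common ``prefix'' (up to associates) is aligned, strips that prefix off to obtain words $\mathfrak a_k, \mathfrak b_k$ with disjoint associate-class support, and then observes that for such disjointly-supported words the sum $\|\mathfrak a_k\|_H + \|\mathfrak b_k\|_H$ equals $\sum_{\mathfrak A} |\mathsf v_H(\mathfrak a; \mathfrak A) - \mathsf v_H(\mathfrak b; \mathfrak A)|$. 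You bypass this combinatorial detour entirely by applying the companion identity $2\min(x,y) = x+y-|x-y|$ termwise to the sum of minima in \eqref{equ:definition-of-the-logical-and}; the $(\|\mathfrak a\|_H + \|\mathfrak b\|_H)/2$ pieces then cancel automatically. Your route is shorter and treats the empty-word case uniformly, while the paper's matching construction makes the interpretation of $\mathfrak a \wedge_H \mathfrak b$ as ``size of the unmatched part'' more visible. The ``in particular'' clause is handled identically in both.
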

\begin{proof}
	It is not difficult to see that
	\begin{equation}
	\label{equ:unravelling-definition-of-norm}
	\|\mathfrak z\|_H = \sum_{\mathfrak A \in \mathscr{A}^\ast(H)} \mathsf v_H(\mathfrak z\fixed[0.22]{\text{ }}; \mathfrak A) = \sideset{}{'}\sum_{\mathfrak A \in \mathscr A^\ast(H)} \mathsf v_H(\mathfrak z\fixed[0.22]{\text{ }}; \mathfrak A),\quad \text{for every }\mathfrak z \in \mathscr{F}^\ast(\mathscr{A}(H)).
	\end{equation}
	Here, the prime in the sum means that the summation is over all $\mathfrak A \in \mathscr A^\ast(H)$ such that $\mathsf v_H(\mathfrak z\fixed[0.22]{\text{ }}; \mathfrak A) \ne 0$.
	
	As a consequence, the claim is trivial if $\mathfrak a$ or $\mathfrak b$ is the empty word.
	Otherwise, write $\mathfrak a = a_1 \ast \cdots \ast a_m$ and $\mathfrak b = b_1 \ast \cdots \ast b_n$, where $a_1, \ldots, a_m, b_1, \ldots, b_n \in \mathscr{A}(H)$. Because
	$$
	\mathfrak a \land_H \mathfrak b =
	\fixed[-0.2]{\text{ }} {\bigr(a_{\sigma(1)} \ast \cdots \ast a_{\sigma(m)}\bigl)} \land_H \fixed[-0.1]{\text{ }} {\bigr(b_{\tau(1)} \ast \cdots \ast b_{\tau(n)}\bigl)},
	$$
	for all $\sigma \in \mathfrak S_m$ and $\tau \in \mathfrak S_n$, there is no loss of generality in assuming that there exists $k \in \llb 0, \min(m,n) \rrb$ with $a_i \simeq_H b_i$ for $i \in \llb 1, k \rrb$, but $a_i \not \simeq_H b_j$ for every $i \in \llb k+1, m \rrb$ and $j \in \llb k+1, n \rrb$.
	Accordingly, set $\mathfrak a_0 := \mathfrak a$ and $\mathfrak b_0 := \mathfrak b$,
	and for each $i \in \llb 1, k \rrb$ define $\mathfrak a_i := a_i^{-1}\mathfrak a_{i-1}$ and $\mathfrak b_i := b_i^{-1} \mathfrak b_{i-1}$.
	Then
	\begin{equation}
	\label{equ:unravelling-the-distance}
	\begin{split}
	\mathfrak a \land_H \mathfrak b  = \max\bigl(\|\mathfrak a_k\|_H, \|\mathfrak b_k\|_H\bigr) \fixed[-0.2]{\text{ }} & =
	\frac{1}{2} \Big(\|\mathfrak a_k\|_H + \|\mathfrak b_k\|_H\Big)\fixed[-0.2]{\text{ }} + \frac{1}{2}\bigl|\|\mathfrak a_k\|_H - \|\mathfrak b_k\|_H\bigr|,
	\end{split}
	\end{equation}
	where for the second equality we have used that $2\max(x,y) = x+y+|x-y|$ for all $x, y \in \mathbf R$. On the other hand, it is clear from our definitions that
	\begin{equation}\label{equ:equal-differences-of-norms}
	\|\mathfrak a_k\|_H - \|\mathfrak b_k\|_H = \|\mathfrak a\|_H - \|\mathfrak b\|_H,
	\end{equation}
	and it is easily checked that, for each $\mathfrak A \in \mathscr{A}^\ast(H)$, we have
	\begin{equation}\label{equ:primed-sums}
	\|\mathfrak a_k\|_H + \|\mathfrak b_k\|_H \stackrel{\eqref{equ:unravelling-definition-of-norm}}{=} \sideset{}{'}\sum_{\mathfrak A \in \mathscr A^\ast(H)} \mathsf v_H(\mathfrak a_k; \mathfrak A) + \sideset{}{'}\sum_{\mathfrak A \in \mathscr A^\ast(H)} \mathsf v_H(\mathfrak b_k; \mathfrak A) = \sum_{\mathfrak A \in \mathscr A^\ast(H)} |\mathsf v_H(\mathfrak a_k; \mathfrak A) - \mathsf v_H(\mathfrak b_k; \mathfrak A)|,
	\end{equation}
	where the last equality follows from considering that $\mathsf v_H(\mathfrak a_k; \mathfrak A) \ne 0$, for some $\mathfrak A \in \mathscr A^\ast(H)$, if and only if $\mathsf v_H(\mathfrak b_k; \mathfrak A) = 0$ (by construction of $\mathfrak a_k$ and $\mathfrak b_k$). Then \eqref{equ:primed-sums}, together with \eqref{equ:unravelling-the-distance} and \eqref{equ:equal-differences-of-norms}, leads to \eqref{equ:identity-for-the-wedge}, because
	$$
	\mathsf v_H(\mathfrak a_k; \mathfrak A) - \mathsf v_H(\mathfrak b_k; \mathfrak A) = \mathsf v_H(\mathfrak a; \mathfrak A) - \mathsf v_H(\mathfrak b; \mathfrak A), \quad \text{for all } \mathfrak A \in \mathscr A^*(H).
	$$
	The ``In particular'' part of the statement is now immediate, since $\|\mathfrak c\|_H = \|\mathfrak d\|_H$ and $\mathsf v_H(\mathfrak c \fixed[0.22]{\text{ }}; \mathfrak A) = \mathsf v_H(\mathfrak d \fixed[0.22]{\text{ }}; \mathfrak A)$ for all $(\mathfrak c, \mathfrak d) \in \mathscr{C}_H$ and $\mathfrak A \in \mathscr{A}^\ast(H)$.
\end{proof}
\begin{proposition}
	\label{prop:dH-a-locally-invariant-metric}
	$\mathsf d_H$ is a locally invariant $\mathscr{C}_H$-metric and has the additional property that:
	\begin{enumerate}[label={\rm (\roman{*})}]
		\item $\mathsf d_H(\mathfrak a, \mathfrak b)$ is a non-negative integer for every $(\mathfrak a, \mathfrak b) \in \mathscr{C}_H$;
		\item $\mathsf d_H(\mathfrak a^k, \mathfrak b^k) = k \fixed[0.2]{\text{ }} \mathsf d_H(\mathfrak a, \mathfrak b)$ for all $\mathfrak a, \mathfrak b \in \mathscr{F}^\ast(\mathscr{A}(H))$ and $k \in \mathbf N$.
	\end{enumerate}
	Moreover, $\mathsf d_H(\mathfrak a, \mathfrak b) = \frac{1}{2}$ for some non-empty $\mathscr{A}(H)$-words $\mathfrak a = a_1 \ast \cdots \ast a_m$ and $\mathfrak b = b_1 \ast \cdots \ast b_n$ of length $m$ and $n$, respectively, if and only if $\pi_H(\mathfrak a) \ne \pi_H(\mathfrak b)$, $m = n$, and there exists a permutation $\sigma \in \mathfrak S_n$ such that $a_i \simeq_H b_{\sigma(i)}$ for every $i \in \llb 1, m \rrb$.
\end{proposition}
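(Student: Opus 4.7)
The strategy is to verify each distance axiom separately for the two components of $\mathsf d_H = \max(\delta_H, \wedge_H)$, pushing the combinatorial work onto $\wedge_H$ via the identity \eqref{equ:identity-for-the-wedge} of Lemma~\ref{lem:fundamental-lemma-for-distance}, and then to extract the additional properties from the same analysis.

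For \ref{it:distance-definition(i)}, the hypothesis $(\mathfrak a, \mathfrak b) \in \mathscr{C}_H$ yields $\pi_H(\mathfrak a) = \pi_H(\mathfrak b)$ (so $\delta_H = 0$) together with a permutation matching atoms up to associates, which forces $\|\mathfrak a\|_H = \|\mathfrak b\|_H$ and $\mathsf v_H(\mathfrak a\fixed[0.22]{\text{ }}; \mathfrak A) = \mathsf v_H(\mathfrak b\fixed[0.22]{\text{ }}; \mathfrak A)$ for every $\mathfrak A \in \mathscr A^\ast(H)$, hence $\wedge_H = 0$. Symmetry \ref{it:distance-definition(ii)} and the upper half of \ref{it:distance-definition(v)} are transparent from the definitions, while the lower half $|\|\mathfrak a\|_H - \|\mathfrak b\|_H| \le \mathsf d_H(\mathfrak a, \mathfrak b)$ drops out of \eqref{equ:identity-for-the-wedge}. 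For the triangle inequality \ref{it:distance-definition(iii)}, the bound $\max(x_1, y_1) + \max(x_2, y_2) \ge \max(x_1 + x_2, y_1 + y_2)$ reduces matters to separate triangle inequalities for $\delta_H$ (if $\pi_H$ differs at the endpoints, it must differ against some intermediate value) and for $\wedge_H$ (termwise triangle inequality for $|\cdot|$ inside \eqref{equ:identity-for-the-wedge}).

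For the $\mathscr C_H$-metric property, $\mathsf d_H(\mathfrak a, \mathfrak b) = 0$ forces $\delta_H = 0$ (so $\pi_H(\mathfrak a) = \pi_H(\mathfrak b)$) and $\wedge_H = 0$ (so $\|\mathfrak a\|_H = \|\mathfrak b\|_H$ and the $\mathsf v_H$-profiles coincide); pairing atoms of $\mathfrak a$ with those of $\mathfrak b$ class-by-class in $\mathscr A^\ast(H)$ then produces a permutation verifying the last clause of the definition of $\mathscr C_H$. For local invariance \ref{it:distance-definition(vii)}, the identity $\mathsf v_H(\mathfrak c \ast \mathfrak a \ast \mathfrak d\fixed[0.22]{\text{ }}; \mathfrak A) = \mathsf v_H(\mathfrak c\fixed[0.22]{\text{ }}; \mathfrak A) + \mathsf v_H(\mathfrak a\fixed[0.22]{\text{ }}; \mathfrak A) + \mathsf v_H(\mathfrak d\fixed[0.22]{\text{ }}; \mathfrak A)$ shows that both the $\mathsf v_H$-differences and the length difference on the right-hand side of \eqref{equ:identity-for-the-wedge} are preserved under flanking by the same $\mathfrak c$ and $\mathfrak d$, so $\wedge_H$ is fully invariant; the $\delta_H$-part, by contrast, can only decrease under such flanking (non-cancellativity in $H$ may equalize previously distinct $\pi_H$-values), giving subinvariance overall and equality whenever $\pi_H(\mathfrak a) = \pi_H(\mathfrak b)$.

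Property (i) is immediate from \ref{it:distance-definition(i)} (in fact $\mathsf d_H = 0$ on pairs in $\mathscr C_H$); more informatively, $\wedge_H$ is a non-negative integer by the raw form of its definition in \eqref{equ:definition-of-the-logical-and}, so $\mathsf d_H \in \mathbf N$ whenever $\pi_H(\mathfrak a) = \pi_H(\mathfrak b)$. Property (ii) is obtained by substituting $\mathsf v_H(\mathfrak a^k\fixed[0.22]{\text{ }}; \mathfrak A) = k\,\mathsf v_H(\mathfrak a\fixed[0.22]{\text{ }}; \mathfrak A)$ and $\|\mathfrak a^k\|_H = k\|\mathfrak a\|_H$ into \eqref{equ:identity-for-the-wedge} to get $\wedge_H(\mathfrak a^k, \mathfrak b^k) = k\,\wedge_H(\mathfrak a, \mathfrak b)$, followed by a short case analysis on $\delta_H$. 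For the ``Moreover'' clause, $\mathsf d_H(\mathfrak a, \mathfrak b) = \tfrac12$ forces $\delta_H = \tfrac12$ (so $\pi_H$ differs) and $\wedge_H \le \tfrac12$; integrality of $\wedge_H$ then gives $\wedge_H = 0$, and the $\mathscr C_H$-metric analysis delivers the required permutation. The main obstacle throughout is the careful bookkeeping between $\delta_H$ and $\wedge_H$ under non-cancellativity, which is precisely the phenomenon that prevents $\mathsf d_H$ from being fully invariant.
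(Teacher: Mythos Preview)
Your approach matches the paper's own: both reduce the verification to the identity \eqref{equ:identity-for-the-wedge} of Lemma~\ref{lem:fundamental-lemma-for-distance} for the $\wedge_H$-component together with the elementary triangle inequality $\delta_H(\mathfrak a,\mathfrak b)\le\delta_H(\mathfrak a,\mathfrak c)+\delta_H(\mathfrak c,\mathfrak b)$ for the $\delta_H$-component, and your write-up simply fills in the details that the paper leaves to the reader.

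One caveat worth recording: the ``short case analysis on $\delta_H$'' you invoke for property~(ii) does not actually close in the regime $\mathfrak a \wedge_H \mathfrak b = 0$, $\pi_H(\mathfrak a)\ne\pi_H(\mathfrak b)$, $k\ge 2$ (take, e.g., $\mathfrak a=a\ast b$ and $\mathfrak b=b\ast a$ in a free monoid on $\{a,b\}$), since then $\mathsf d_H(\mathfrak a^k,\mathfrak b^k)=\max\bigl(\delta_H(\mathfrak a^k,\mathfrak b^k),\,k\cdot 0\bigr)\le\tfrac12$ while $k\,\mathsf d_H(\mathfrak a,\mathfrak b)=k/2\ge 1$. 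This is a defect in the statement of~(ii) itself rather than in your method, and the paper's sketch does not address it either; everything else in your argument is sound and aligned with the paper.
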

\begin{proof}
	\ref{it:distance-definition(i)} and \ref{it:distance-definition(ii)} are trivial, the rest is a consequence of \eqref{equ:unravelling-definition-of-norm}, Lemma \ref{lem:fundamental-lemma-for-distance}, the triangle inequality for the absolute value, and the fact that $\delta_H(\mathfrak a, \mathfrak b) \le \delta_H(\mathfrak a, \mathfrak c) + \delta_H(\mathfrak c, \mathfrak b)$ for all $\mathfrak a, \mathfrak b, \mathfrak c \in \mathscr{F}^\ast(\mathscr{A}(H))$, with equality if and only if $(\mathfrak a, \mathfrak c) \in \mathscr{C}_H$ or $(\mathfrak c, \mathfrak b) \in \mathscr{C}_H$ (we encourage the reader to fill in the details).
\end{proof}
\begin{remark}
	\label{rem:Baeth-Smertning-permutable-distance}
	Up to the technical details highlighted in Remarks \ref{rem:permutable_factorizations} and \ref{rem:modelled-after}, $\mathsf{d}_H$ is no different from the \textit{permutable distance} introduced by Baeth and Smertnig in the cancellative setting, cf. \cite[Definition 3.4(2) and Construction 3.3(2)]{BaSm}. In particular, it follows from Remark \ref{rem:permutable_factorizations} and \cite[Remark 3.5(1)]{BaSm} that $\mathsf d_H$ is essentially the same as the distance of \cite[Definition 1.2.4 and p. 14]{GeHK06} on the level of reduced, cancellative, commutative monoids.
\end{remark}
We conclude the section with the definition of another arithmetic invariant that has played a prominent role in recent developments of factorization theory, as it provides more accurate information about factorizations than just their lengths.
\begin{definition}
	Let $H$ be a monoid.
	We take the \textit{catenary degree} of an element $x \in H$, denoted by ${\sf c}_H(x)$, to be the infimum of the set of all $d \in \mathbf N$ for which the following condition is verified:
\begin{itemize}
\item For all $\mathfrak a, \mathfrak b \in \mathcal{Z}_H(x)$ there are factorizations $\mathfrak{c}_0, \ldots, \mathfrak{c}_n \in \mathcal{Z}_H(x)$ with $\mathfrak{c}_0 = \mathfrak{a}$ and $\mathfrak{c}_n = \mathfrak b$ such that ${\sf d}_H(\mathfrak c_{i-1},\mathfrak c_i) \le d$ for every $i \in \llb 1, n \rrb$.
\end{itemize}
It is seen that ${\sf c}_H(x) = 0$, for a given $x \in H$, if and only if $|\mathsf{Z}_H(x)| \le 1$. Consequently, we take
\begin{equation*}
\label{equ:set-of-catenary-degrees}
\cat(H) := \{{\sf c}_H(x): x \in H\} \setminus \{0\} \subseteq \mathbf N^+ \cup \{\infty\},
\end{equation*}
and we call $\cat(H)$ the \textit{set of catenary degrees} (or \textit{catenary set}) of $H$.
\end{definition}
It is clear that $\cat(H) \subseteq \mathbf N^+$ if $H$ is a \BF-monoid, but this need not be true in general.
\begin{remark}
\label{rem:comparing-catenary-degrees}
Let $x \in H \setminus H^\times$. It follows by Remark \ref{rem:Baeth-Smertning-permutable-distance} that, if $H$ is atomic and cancellative, ${\sf c}_H(x)$ coincides with the catenary degree of $x$ associated, according to \cite[Definition 4.1(3)]{BaSm}, to the permutable distance of Baeth and Smertnig. In particular, if $H$ is atomic, cancellative, and commutative, ${\sf c}_H(x)$ has the same value as the catenary degree of $x$ in the classical theory, cf. \cite[Definition 1.6.1.2]{GeHK06}.
\end{remark}
Note that, in the same spirit of \cite[\S{ }4]{BaSm}, every subinvariant distance $\sf d$ on $H$ gives rise to a corresponding notion of catenary degree. However, this is something beyond the scope of the present work.

\subsubsection{Equimorphisms}\label{subsub:equimorphisms}
The kind of arithmetic properties we consider in this paper, are often studied by reduction to suitable families of atomic monoids that are, in a certain way, less problematic than others.
This is achieved by means of transfer techniques (cf. Remark \ref{rem:philosophy-of-equimorphisms}), as per Halter-Koch's notion of transfer homomorphism in the commutative and cancellative setting, see \cite[Lemma 5.4]{HK97a}; or Baeth and Smertnig's notion of weak transfer homomorphism, see \cite[Definition 2.1]{BaSm} and \cite[\S{ }4]{Ge16c}.

\begin{definition}\label{def:equimorphisms}
Let $H$ and $K$ be multiplicatively written monoids, and let $\varphi$ a homomorphism $H \to K$.
We denote by $\varphi^\ast$ the unique (monoid) homomorphism $\mathscr{F}^\ast(H) \to \mathscr{F}^\ast(K)$ such that $\varphi^\ast(x) = \varphi(x)$ for all $x \in H$, 
and we refer to $\varphi$ as a (\textit{monoid}) \textit{equimorphism} (from $H$ to $K$) if:
\begin{enumerate}[label={\rm (\textsc{e}\arabic{*})}]
\item\label{covariant-transfer(1)}
$\varphi^{-1}(K^\times) \subseteq H^\times$ (or equivalently $\varphi^{-1}(K^\times) = H^\times$).
\item\label{covariant-transfer(2)} $\varphi$ is \textit{atom-preserving}, i.e., $\varphi(a) \in \mathscr{A}(K)$ for all $a \in \mathscr{A}(H)$.
\item\label{covariant-transfer(3)} If $x \in H$, $\mathfrak b \in \mathcal{Z}_K(\varphi(x)) \ne \emptyset$, and $\|\mathfrak b\|_K \ne 0$, then $\varphi^\ast(\mathfrak a) \in \llb \mathfrak b \rrb_{\mathscr{C}_K}$ for some $\mathfrak{a} \in \mathcal{Z}_H(x)$.
\end{enumerate}
Moreover, we call $\varphi$ a \textit{weak transfer homomorphism} if it is an equimorphism and $K = K^\times \varphi(H) K^\times$.

Then, we say that $H$ is \textit{equimorphic} to $K$ if there exists an equimorphism from $H$ to $K$; and that $H$ is a \textit{transfer Krull monoid} if there is a weak transfer homomorphism from $H$ to a monoid of zero-sum sequences over an abelian group $G$ with support in a subset $G_0 \subseteq G$ (see \cite[Definition 2.5.5]{GeHK06} for further details and terminology).
\end{definition}
%
\begin{remark}
\label{rem:covariant-transfer(1)}
In \ref{covariant-transfer(3)}, the $K$-word $\varphi^\ast(\mathfrak a)$ is actually an $\mathscr{A}(K)$-word by condition \ref{covariant-transfer(2)}. In addition, the $\mathscr{A}(H)$-word $\mathfrak a$ is non-empty, since  $\|\mathfrak a\|_H = \|\varphi^*(\mathfrak a)\|_K$ and, on the other hand, $\varphi^*(\mathfrak a) \in \llb \mathfrak b \rrb_{\mathscr C_K}$ implies $\|\varphi^*(\mathfrak a)\|_K = \|\mathfrak b\|_K \ne 0$. 
Accordingly, write $\mathfrak a = a_1 \ast \cdots \ast a_m$ and $\mathfrak{b} = b_1 \ast \cdots \ast b_n$, with $a_1, \ldots, a_m \in \mathscr{A}(H)$ and $b_1, \ldots, b_n \in \mathscr{A}(K)$. Then $\varphi^\ast(\mathfrak{a}) \in \llb \mathfrak b \rrb_{\mathscr{C}_K}$ is equivalent to having that $\pi_K(\mathfrak b) = \pi_K(\varphi^\ast(\mathfrak a))$, $m = n$, and there exists a permutation $\sigma \in \mathfrak S_n$ such that $b_{\sigma(i)} \simeq_K \varphi(a_i)$ for every $i \in \llb 1, n \rrb$.
\end{remark}
\begin{remark}
\label{rem:covariant-transfer(3)}
Condition \ref{covariant-transfer(2)} cannot be proved from \ref{covariant-transfer(1)} and \ref{covariant-transfer(3)}. Indeed, let $H$ (respectively, $K)$ be the monoid of non-negative integers (respectively, non-negative real numbers) under addition, and let $\varphi$ be the canonical embedding. Clearly, $\varphi$ satisfies \ref{covariant-transfer(1)} and \ref{covariant-transfer(3)}, because $H^\times = K^\times = \{0\}$ and $\mathscr{A}(K) = \emptyset$. But $1 \in \mathscr{A}(H)$, so $\varphi$ cannot satisfy \ref{covariant-transfer(2)}.
\end{remark}
\begin{remark}
\label{rem:comparison-between-equis-and-weak-transfer-homs}
In Baeth and Smertnig's original definition of a weak transfer homomorphism $\varphi: H \to K$, it is assumed that $H$ is cancellative and $K$ is atomic, which implies that $\varphi$ is atom-preserving. 
By Remark \ref{rem:covariant-transfer(3)}, this need not hold for an arbitrary equimorphism, which is the reason for having included condition \ref{covariant-transfer(2)} in the above definitions. In particular, it follows from here and Remark \ref{rem:covariant-transfer(1)} that every weak transfer homomorphism in the sense of Baeth and Smertnig is also a weak transfer homomorphism in our sense, and hence an equimorphism.
\end{remark}
\begin{remark}
\label{rem:philosophy-of-equimorphisms}
The rationale behind the introduction of transfer techniques in factorization theory is as follows: We have some kind of monoid homomorphism $\varphi: H \to K$, and we want to understand properties of one of $H$ or $K$ by looking at corresponding properties of the other. To this end, we use $\varphi$ to shift information from $H$ to $K$ (as we do here with equimorphisms, see Theorems \ref{th:cotransfer-hom} and \ref{th:transfer}), if $H$ is, in a sense, easier to study than $K$; or to pull it back from $K$ to $H$ (as is commonly the case with transfer and weak transfer homomorphisms), if it is the other way around.
\end{remark}
\subsection{Abstract arithmetic results}
Now that we have introduced most of the basic notions we need and clarified, we hope, some subtle aspects of the theory, we are ready to prove a couple of results extending some pieces of
\cite[Proposition 1.2.11.1]{GeHK06} and \cite[Lemma 11]{Ge16c}, respectively, to the general setting of this work: As is true for the largest number of results from the present section, they will be used later, in \S{}\S{ }\ref{sec:power_monoid} and \ref{sec:the_case_of_integers}, to study the arithmetic of power monoids.
\begin{proposition}
\label{prop:divisor-closed-sub}
Let $H$ be a monoid, and assume that $M$ is a divisor-closed submonoid of $H$. Then $M^\times = H^\times$ and $\mathscr{A}(M) = \mathscr{A}(H) \cap M$. In addition, ${\sf L}_M(x) = {\sf L}_H(x)$, $\mathsf{Z}_M(x) = \mathsf{Z}_H(x)$, and ${\sf c}_M(x) = {\sf c}_H(x)$ for all $x \in M$, and con\-se\-quent\-ly $\mathscr{L}(M) \subseteq \mathscr{L}(H)$, $\Delta(M) \subseteq \Delta(H)$, and $\cat(M) \subseteq \cat(H)$.
\end{proposition}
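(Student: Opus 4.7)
The plan is to establish the four assertions in the order they are stated, since each subsequent claim uses the preceding ones. The whole argument hinges on leveraging divisor-closedness together with Lemma \ref{lem:basic-properties-atoms-units}.

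First I would show $M^\times = H^\times$. The inclusion $M^\times \subseteq H^\times$ is immediate from $M \subseteq H$ and the fact that $M$ and $H$ share the same identity. Conversely, if $u \in H^\times$, then $u^{-1} \cdot 1_H \cdot u = 1_H \in M$, so $u^{-1} \mid_H 1_H$, and divisor-closedness forces $u^{-1} \in M$; symmetrically $u \in M$, whence $u \in M^\times$. Next, to prove $\mathscr{A}(M) = \mathscr{A}(H) \cap M$, I would exploit that, for any factorization $xy = a$ in $H$ with $a \in M$, both $x$ and $y$ divide $a$ in $H$ and therefore belong to $M$ by divisor-closedness. This lets one move between factorizations in $M$ and factorizations in $H$ at the level of single atoms, using the equality $M^\times = H^\times$ from Step~1 to handle the unit condition on both sides.

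For ${\sf L}_M(x) = {\sf L}_H(x)$ with $x \in M$, the case $x = 1_H$ is by convention; otherwise, every factorization $x = a_1 \cdots a_k$ in $H$ has all $a_i \in M$ (again by the divisor-closedness argument, since $a_i \mid_H x$), and thus, by Step~2, all $a_i \in \mathscr{A}(M)$. In fact this same observation yields $\mathcal Z_M(x) = \mathcal Z_H(x)$ as subsets of $\mathscr{F}^\ast(\mathscr{A}(H))$ once $\mathscr{F}^\ast(\mathscr{A}(M))$ is identified with a sub-free-monoid. To upgrade this to $\mathsf{Z}_M(x) = \mathsf{Z}_H(x)$, I would check that the congruences $\mathscr{C}_M$ and $\mathscr{C}_H$ agree on $\mathcal Z_M(x)$: this reduces to noting that $\pi_M$ is the restriction of $\pi_H$, and that for $a, b \in M$ the relations $a \simeq_M b$ and $a \simeq_H b$ coincide because $H^\times a H^\times = M^\times a M^\times$ by Step~1. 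A small point to make carefully is that $\llb \mathfrak a \rrb_{\mathscr C_H}$ stays inside $\mathscr F^\ast(\mathscr A(M))$: any $\mathfrak b \sim_{\mathscr C_H} \mathfrak a$ has $\pi_H(\mathfrak b) = x \in M$, and by the divisor-closedness argument all its letters lie in $\mathscr A(M)$.

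The equality ${\sf c}_M(x) = {\sf c}_H(x)$ will then follow once I verify that $\mathsf d_M$ and $\mathsf d_H$ agree on pairs of words from $\mathscr F^\ast(\mathscr A(M))$. Since $\pi_M = \pi_H|$, one has $\delta_M = \delta_H$ on such pairs; for the $\wedge$ term, $M^\times = H^\times$ implies $\mathscr A^\ast(M) = \{H^\times a H^\times : a \in \mathscr A(H) \cap M\} \subseteq \mathscr A^\ast(H)$, and the multiplicities $\mathsf v_H(\mathfrak a; \mathfrak A)$ vanish for any $\mathfrak A \in \mathscr A^\ast(H) \setminus \mathscr A^\ast(M)$ when $\mathfrak a$ uses only letters from $\mathscr A(M)$, so the sum in \eqref{equ:definition-of-the-logical-and} is unaffected. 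Combined with $\mathcal Z_M(x) = \mathcal Z_H(x)$, the catenary-degree definition gives ${\sf c}_M(x) = {\sf c}_H(x)$ directly. The three set-level inclusions $\mathscr L(M) \subseteq \mathscr L(H)$, $\Delta(M) \subseteq \Delta(H)$, and $\cat(H) \supseteq \cat(M)$ are then formal consequences.

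No step is genuinely hard; the main obstacle to clean exposition is the bookkeeping needed to justify $\llb \mathfrak a \rrb_{\mathscr C_H} = \llb \mathfrak a \rrb_{\mathscr C_M}$, because the two congruences live on different free monoids and one must rule out that passing from $\mathscr C_M$ to $\mathscr C_H$ enlarges the class via letters outside $\mathscr A(M)$. The divisor-closedness of $M$, used once at the level of $\pi_H(\mathfrak b) = x$, is precisely what prevents this.
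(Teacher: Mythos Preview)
Your proposal is correct and follows essentially the same route as the paper's proof: establish $M^\times = H^\times$ via $u \mid_H 1_H$ and divisor-closedness, then $\mathscr{A}(M) = \mathscr{A}(H) \cap M$ by pulling factorizations back into $M$, then $\mathcal Z_M(x) = \mathcal Z_H(x)$ and $\simeq_M\,=\,\simeq_H$ on $M$, from which everything else follows. The paper compresses the last steps into ``the rest is obvious,'' whereas you spell out why the congruences $\mathscr C_M$ and $\mathscr C_H$ agree on the relevant words and why $\mathsf d_M$ and $\mathsf d_H$ coincide there; this extra bookkeeping is accurate and does not constitute a different argument.
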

\begin{proof}
Of course, $M^\times \subseteq H^\times$. On the other hand, $u \in H^\times$ only if $u \mid_H 1_H$, and since $1_H = 1_M$ and $M$ is a divisor-closed submonoid of $H$, this implies $H^\times \subseteq M^\times$. To wit, $M^\times = H^\times$.

Consequently, it is clear that $\mathscr{A}(H) \cap M \subseteq \mathscr{A}(M)$. To prove the opposite inclusion, let $a \in \mathscr{A}(M)$, and write $a = xy$ for some $x, y \in H$. Then $x, y \in M$, using again that $M$ a divisor-closed submonoid of $H$. So, $x$ or $y$ is a unit of $M$, and hence $a \in \mathscr{A}(H)$, because $M^\times = H^\times$. Therefore, given $x, y \in M$, it is immediate that $\mathcal{Z}_M(x) = \mathcal{Z}_H(x)$, and $x \simeq_M y$ if and only if $x \simeq_H y$. This yields $\mathsf Z_M(x) = \mathsf Z_H(x)$ for every $x \in M$, and the rest is obvious.
\end{proof}
\begin{theorem}
\label{th:cotransfer-hom}
Let $H$ and $K$ be monoids, and $\varphi: H \to K$ an equimorphism. The following hold:
\begin{enumerate}[label={\rm (\roman{*})}]
\item\label{it:prop:cotransfer-hom(1)} ${\sf L}_H(x) = {\sf L}_K(\varphi(x))$ for each $x \in H \setminus H^\times$.
\item\label{it:prop:cotransfer-hom(3)} For each $\mathfrak A \in \mathscr A^\ast(H)$ there exists a unique $\mathfrak B \in \mathscr A^\ast(K)$ with $\varphi(\mathfrak A) \subseteq \mathfrak B$.
\item\label{it:prop:cotransfer-hom(4a)} $\varphi^\ast(\mathfrak a) \land_K \varphi^\ast(\mathfrak b) \le \mathfrak a \land_H \mathfrak b$ for all $\mathfrak a, \mathfrak b \in \mathscr F^\ast(\mathscr A(H))$ with $\|\mathfrak a\|_H = \|\mathfrak b\|_H$.
\item\label{it:prop:cotransfer-hom(4)} ${\sf c}_K(\varphi(x)) \le {\sf c}_H(x)$ for all $x \in H$.
\item\label{it:prop:cotransfer-hom(5)} $\varphi(H)$ is a divisor-closed submonoid of $K$ only if $\varphi(H^\times) = K^\times$.
\item\label{it:prop:cotransfer-hom(6)} If $\varphi(H^\times) = K^\times$ and $K$ is atomic, then $\varphi$ is surjective.
\item\label{it:prop:cotransfer-hom(7)} If $K$ is atomic, then so is $H$.
\end{enumerate}
In particular, $\mathscr{L}(H) \subseteq \mathscr{L}(K)$ and $\Delta(H) \subseteq \Delta(K)$.
\end{theorem}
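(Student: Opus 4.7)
The strategy is to verify the seven clauses in sequence, deriving the final inclusions $\mathscr{L}(H) \subseteq \mathscr{L}(K)$ and $\Delta(H) \subseteq \Delta(K)$ as an immediate consequence of (i) together with the definition of $\Delta$. I would dispatch (i), (ii), (v), and (vii) first, as these follow from unwinding the definitions: for (i), applying $\varphi$ to an atomic factorization of $x$ and invoking \ref{covariant-transfer(2)} yields the inclusion $\mathsf{L}_H(x) \subseteq \mathsf{L}_K(\varphi(x))$, while \ref{covariant-transfer(3)} handles the reverse, noting that $\varphi(x) \in K \setminus K^\times$ by \ref{covariant-transfer(1)} so that every $\mathfrak{b} \in \mathcal{Z}_K(\varphi(x))$ has $\|\mathfrak{b}\|_K \ne 0$; for (ii), taking $\mathfrak{B} := K^\times \varphi(a) K^\times$ works (homomorphisms send units to units, and \ref{covariant-transfer(2)} places $\mathfrak{B}$ in $\mathscr{A}^\ast(K)$), with uniqueness forced by the disjointness of the $\simeq_K$-classes; for (v), any $u \in K^\times$ satisfies $u \mid_K 1_K = \varphi(1_H) \in \varphi(H)$, so divisor-closedness combined with \ref{covariant-transfer(1)} gives $u \in \varphi(H^\times)$; and (vii) is direct because $x \in H \setminus H^\times$ forces $\varphi(x) \in K \setminus K^\times$ by \ref{covariant-transfer(1)}, and any $K$-atomic factorization lifts through \ref{covariant-transfer(3)} to an $H$-atomic factorization of $x$.

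The technical core lies in (iii) and (iv). For (iii), invoke Lemma \ref{lem:fundamental-lemma-for-distance}: the hypothesis $\|\mathfrak{a}\|_H = \|\mathfrak{b}\|_H$ kills the length term in $\wedge$ on both sides. The map $\mathfrak{A} \mapsto \mathfrak{B}(\mathfrak{A})$ supplied by (ii) satisfies $\mathsf{v}_K(\varphi^\ast(\mathfrak{c}); \mathfrak{B}) = \sum_{\mathfrak{B}(\mathfrak{A}) = \mathfrak{B}} \mathsf{v}_H(\mathfrak{c}; \mathfrak{A})$, so a fibrewise application of the triangle inequality for $|\cdot|$ gives
\[
\sum_{\mathfrak{B}} \bigl|\mathsf{v}_K(\varphi^\ast(\mathfrak{a}); \mathfrak{B}) - \mathsf{v}_K(\varphi^\ast(\mathfrak{b}); \mathfrak{B})\bigr| \le \sum_{\mathfrak{A}} \bigl|\mathsf{v}_H(\mathfrak{a}; \mathfrak{A}) - \mathsf{v}_H(\mathfrak{b}; \mathfrak{A})\bigr|,
\]
which delivers (iii). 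Part (iv) combines this with \ref{covariant-transfer(3)}: after dispatching the trivial case $x \in H^\times$ (where $|\mathsf{Z}_K(\varphi(x))| \le 1$ forces both degrees to vanish), pick $\mathfrak{b}, \mathfrak{b}' \in \mathcal{Z}_K(\varphi(x))$ and lift them via \ref{covariant-transfer(3)} to $\mathfrak{a}, \mathfrak{a}' \in \mathcal{Z}_H(x)$ with $\varphi^\ast(\mathfrak{a}) \in \llb \mathfrak{b} \rrb_{\mathscr{C}_K}$ and $\varphi^\ast(\mathfrak{a}') \in \llb \mathfrak{b}' \rrb_{\mathscr{C}_K}$; a $\mathsf{c}_H(x)$-bounded chain $\mathfrak{a} = \mathfrak{c}_0, \ldots, \mathfrak{c}_n = \mathfrak{a}'$ in $\mathcal{Z}_H(x)$ produces the $K$-chain $\mathfrak{b}, \varphi^\ast(\mathfrak{c}_1), \ldots, \varphi^\ast(\mathfrak{c}_{n-1}), \mathfrak{b}'$. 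Since all the $\mathfrak{c}_i$ share $\pi_H$-image $x$, the $\delta$-terms vanish on both sides, and a minor extension of (iii) (without the equal-length restriction, via the same fibrewise triangle inequality together with the preserved length-difference term) bounds each $\mathsf{d}_K$-step by the corresponding $\mathsf{d}_H$-step, so $\mathsf{c}_K(\varphi(x)) \le \mathsf{c}_H(x)$.

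The main obstacle is (vi). I would attempt induction on $n := \min \mathsf{L}_K(y)$: the base case $n = 0$ is covered by the hypothesis $\varphi(H^\times) = K^\times$, and the step $n \ge 2$ works by writing $y = b_1 \cdots b_n$, realising $b_1 \cdots b_{n-1} = \varphi(x')$ by the inductive hypothesis, invoking \ref{covariant-transfer(3)} on $b_1 \ast \cdots \ast b_{n-1}$ to extract an atom $a' \in \mathscr{A}(H)$ with $\varphi(a') \simeq_K b_1$ (so that $b_1 \in \varphi(H)$ via $K^\times = \varphi(H^\times)$), and combining with $b_2 \cdots b_n \in \varphi(H)$ (again by the inductive hypothesis). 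The delicate point is the atom case $n = 1$: lifting an arbitrary $b \in \mathscr{A}(K)$ into $\varphi(H)$. A natural reduction is to run the $n = 2$ argument on $b^2$ and extract an atom-level lift through \ref{covariant-transfer(3)}, but this presupposes $b^2 \in \varphi(H)$, which is not automatic from what has been established, so the induction cannot be performed strictly bottom up. Closing this gap—likely through an ancillary argument exploiting \ref{covariant-transfer(2)}, \ref{covariant-transfer(3)}, and the atomicity of $K$ to show that every atom of $K$ is associated to some $\varphi(a)$ with $a \in \mathscr{A}(H)$—is where the proof demands the most attention; the inclusions $\mathscr{L}(H) \subseteq \mathscr{L}(K)$ and $\Delta(H) \subseteq \Delta(K)$ then follow at once from (i).
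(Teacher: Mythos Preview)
Your treatment of parts (i)--(v) and (vii) is correct and essentially matches the paper's, with one pleasant variation: for (iii) you invoke the absolute-difference formula of Lemma~\ref{lem:fundamental-lemma-for-distance} and a fibrewise triangle inequality, whereas the paper works from the defining formula \eqref{equ:definition-of-the-logical-and} together with $\sum_i \min(a_i,b_i) \le \min\bigl(\sum_i a_i, \sum_i b_i\bigr)$. Both routes are sound, and both in fact establish (iii) without the hypothesis $\|\mathfrak a\|_H = \|\mathfrak b\|_H$ (since $\|\varphi^\ast(\mathfrak c)\|_K = \|\mathfrak c\|_H$, the $\max$-terms and the length-difference terms match automatically), which is what you correctly note is needed in (iv).

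Your hesitation over (vi) is well founded, and the difficulty you isolate is genuine. The paper's argument writes an arbitrary $y \in K \setminus K^\times$ as $b_1 \cdots b_n$ with $b_i \in \mathscr A(K)$ and then applies \ref{covariant-transfer(3)} (via Remark~\ref{rem:covariant-transfer(1)}) to the word $b_1 \ast \cdots \ast b_n$; but \ref{covariant-transfer(3)} is stated only for $\mathfrak b \in \mathcal Z_K(\varphi(x))$, i.e., for factorizations of elements already known to lie in $\varphi(H)$, which is precisely what is to be proved. In fact (vi) is false as stated: take $H = (\mathbf N,+)$, $K = (\mathbf N \times \mathbf N,+)$, and $\varphi(n) = (n,0)$. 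Then $\varphi(H^\times) = \{(0,0)\} = K^\times$, $K$ is atomic with atoms $(1,0)$ and $(0,1)$, and $\varphi$ satisfies \ref{covariant-transfer(1)}--\ref{covariant-transfer(3)} (every atomic factorization of $(n,0)$ in $K$ uses only the atom $(1,0)$, hence lifts to $H$), yet $\varphi$ is not surjective. The ``ancillary argument'' you sought --- that every atom of $K$ is associated to some $\varphi(a)$ with $a \in \mathscr A(H)$ --- fails for $(0,1)$ in this example. The remaining parts, and the ``In particular'' clause, are unaffected by this.
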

\begin{proof}
\ref{it:prop:cotransfer-hom(1)} Pick $x \in H \setminus H^\times$, and set $L := {\sf L}_H(x)$ and $L^\prime := {\sf L}_K(\varphi(x))$. Since $x$ is not a unit, it is clear from condition \ref{covariant-transfer(1)} of Definition \ref{def:equimorphisms} that $\varphi(x) \notin K^\times$, which yields $L, L^\prime \subseteq \mathbf N^+$ (see Remark \ref{rem:where-is-0}).

Accordingly, assume $L \ne \emptyset$ and let $k \in L$. Then $x = a_1 \cdots a_k$ for some $a_1, \ldots, a_k \in \mathscr{A}(H)$. Therefore $\varphi(x) = \varphi(a_1) \cdots \varphi(a_k)$, and hence $k \in L^\prime$, because $\varphi$ is an atom-preserving homomorphism.

Conversely, assume $L^\prime \ne \emptyset$ and pick $k \in L^\prime$. Then $\varphi(x) = b_1 \cdots b_k$ for some $b_1, \ldots, b_k \in \mathscr{A}(K)$, and by Remark \ref{rem:covariant-transfer(1)} there are $a_1, \ldots, a_k \in \mathscr{A}(H)$ such that $x = a_1 \cdots a_k$, with the result that $k \in L$.

So, putting it all together, we can conclude that $L = L^\prime$. The ``In particular'' part of the statement (on systems of sets of lengths and delta sets) is then an obvious consequence.

\ref{it:prop:cotransfer-hom(3)} Given $\mathfrak A \in \mathscr{A}^\ast(H)$, let $a \in \mathfrak A$ and define $\mathfrak B := K^\times \varphi(a) K^\times$. Then $\mathfrak A = H^\times a H^\times$, and since $\varphi$ is a ho\-mo\-mor\-phism, we have that $\varphi(\mathfrak A) = \varphi(H^\times) \fixed[0.2]{\text{ }} \varphi(a) \fixed[0.2]{\text{ }} \varphi(H^\times) \subseteq K^\times \varphi(a) K^\times = \mathfrak B$; in addition, $\mathfrak B \in \mathscr A^\ast(K)$, because $\varphi$ is atom-preserving.
The rest is trivial, in that $\mathscr A^\ast(K)$ is the quotient set of $\mathscr A(K)$ under the restriction of the (equivalence) relation $\simeq_K$ to the atoms of $K$; so each class in $\mathscr A^\ast(K)$ is non-empty, and pairwise distinct classes are disjoint (by the general properties of equivalences).

\ref{it:prop:cotransfer-hom(4a)} Let $\mathfrak a, \mathfrak b \in \mathscr F^\ast(\mathscr A(H))$ with $\|\mathfrak a\|_H = \|\mathfrak b\|_H$. Then $\|\varphi^\ast(\mathfrak a)\|_K = \|\varphi^\ast(\mathfrak b)\|_K$, hence it is clear from \eqref{equ:definition-of-the-logical-and} that $\varphi^\ast(\mathfrak a) \land_K \varphi^\ast(\mathfrak b) \le \mathfrak a \land_H \mathfrak b$ if and only if
\begin{equation}\label{equ:inequality-between-valuations}
{\sum_{\mathfrak A \in \mathscr{A}^\ast(H)} \min(\mathsf{v}_{H}(\mathfrak a\fixed[0.22]{\text{ }}; \mathfrak A), \mathsf{v}_{H}(\mathfrak b\fixed[0.22]{\text{ }}; \mathfrak A))} \le {\sum_{\mathfrak B \in \mathscr{A}^\ast(K)} \min(\mathsf{v}_{K}(\varphi^\ast(\mathfrak a)\fixed[0.22]{\text{ }}; \mathfrak B), \mathsf{v}_{H}(\varphi^\ast(\mathfrak b)\fixed[0.22]{\text{ }}; \mathfrak B))}.
\end{equation}
Denote by $\mathfrak B^{\ast}$, for every $\mathfrak B \in \mathscr A^\ast(K)$, the set of all $\mathfrak A \in \mathscr A^\ast(H)$ such that $\varphi(\mathfrak A) \subseteq \mathfrak B$. Then we see from \ref{it:prop:cotransfer-hom(3)} that $\mathscr A^\ast(H) = \biguplus \{\mathfrak B^{\ast}: \mathfrak B \in \mathscr A^\ast(K)\}$. Therefore, a sufficient condition for \eqref{equ:inequality-between-valuations} to hold is that
\begin{equation}
\label{equ:simplified-inequality-on-valuations}
{\sum_{\mathfrak A \in \mathfrak{B}^{\ast}} \min(\mathsf{v}_{H}(\mathfrak a\fixed[0.22]{\text{ }}; \mathfrak A), \mathsf{v}_{H}(\mathfrak b\fixed[0.22]{\text{ }}; \mathfrak A))} \le \min(\mathsf{v}_{K}(\varphi^\ast(\mathfrak a)\fixed[0.22]{\text{ }}; \mathfrak B), \mathsf{v}_{H}(\varphi^\ast(\mathfrak b)\fixed[0.22]{\text{ }}; \mathfrak B)), \quad\text{for all } \mathfrak B \in \mathscr{A}^\ast(K).
\end{equation}
On the other hand, it is easily verified that, for all $a_1, b_1, \ldots, a_n, b_n \in \mathbf R$,
$$
\min(a_1, b_1) + \cdots + \min(a_n, b_n) \le \min(a_1 + \cdots + a_n, b_1 + \cdots + b_n).
$$
So, for \eqref{equ:simplified-inequality-on-valuations} to be true it is enough to check that $\sum_{\mathfrak A \in \mathfrak B^{\ast}} \mathsf{v}_{H}(\mathfrak c\fixed[0.22]{\text{ }}; \mathfrak A) \le \mathsf{v}_{K}(\varphi^\ast(\mathfrak c)\fixed[0.22]{\text{ }}; \mathfrak B)$ for every $\mathfrak B \in \mathscr{A}^\ast(K)$ and every non-empty $\mathscr{A}(H)$-word $\mathfrak c = c_1 \ast \cdots \ast c_n$,
which is now trivial, because $c_i \in \mathfrak A$ for some $\mathfrak A \in \mathfrak B^{\ast}$ and $i \in \llb 1, n \rrb$ only if $\varphi(c_i) \in \mathfrak B$.

\ref{it:prop:cotransfer-hom(4)} Let $x \in H$. The inequality is obvious if ${\sf c}_K(\varphi(x)) = 0$.
Otherwise, $\mathcal{Z}_K(\varphi(x)) \ne \emptyset$ and $x \ne 1_H$. So, pick $\mathfrak a^{\fixed[0.2]{\text{ }}\prime}, \mathfrak b^{\fixed[0.2]{\text{ }}\prime} \in \mathcal{Z}_K(\varphi(x))$. By condition \ref{covariant-transfer(3)}, $\varphi^\ast(\mathfrak a) \in \llb \mathfrak a^{\fixed[0.2]{\text{ }}\prime} \rrb_{\mathscr{C}_K}$ and $\varphi^\ast(\mathfrak b) \in \llb \mathfrak b^{\fixed[0.2]{\text{ }}\prime} \rrb_{\mathscr{C}_K}$ for some $\mathfrak a, \mathfrak b \in \mathcal{Z}_H(x)$. Consequently, there are factorizations $\mathfrak c_0, \ldots, \mathfrak c_n \in \mathcal{Z}_H(x)$ with $\mathfrak c_0 = \mathfrak a$, $\mathfrak c_n = \mathfrak b$, and ${\sf d}_H(\mathfrak c_{i-1}, \mathfrak c_i) \le {\sf c}_H(x)$ for each $i \in \llb 1, n \rrb$.
Set $\mathfrak c_0^{\fixed[0.2]{\text{ }}\prime} := \mathfrak a^{\fixed[0.2]{\text{ }}\prime}$, $\mathfrak c_n^{\fixed[0.2]{\text{ }}\prime} := \mathfrak b^{\fixed[0.2]{\text{ }}\prime}$, and $\mathfrak c_i^{\fixed[0.2]{\text{ }}\prime} := \varphi^\ast(\mathfrak c_i)$ for $i \in \llb 1, n-1 \rrb$. By Lemma \ref{lem:fundamental-lemma-for-distance}, 
$$
\mathfrak c_0^{\fixed[0.2]{\text{ }}\prime} \land_K \mathfrak c_1^{\fixed[0.2]{\text{ }}\prime} = \varphi^\ast(\mathfrak a) \land_K \mathfrak c_1^{\fixed[0.2]{\text{ }}\prime} 
= \varphi^\ast(\mathfrak c_0) \land_K \mathfrak c_1^{\fixed[0.2]{\text{ }}\prime}
\quad\text{and}\quad
\mathfrak c_{n-1}^{\fixed[0.2]{\text{ }}\prime} \land_K \mathfrak c_n^{\fixed[0.2]{\text{ }}\prime} = \mathfrak c_{n-1}^{\fixed[0.2]{\text{ }}\prime} \land_K \varphi^\ast(\mathfrak b)
= \mathfrak c_{n-1}^{\fixed[0.2]{\text{ }}\prime} \land_K \varphi^\ast(\mathfrak c_n).
$$
Therefore, we conclude from \ref{it:prop:cotransfer-hom(4a)} that, for every $i \in \llb 1, n \rrb$,
$$
{\sf d}_K(\mathfrak c_{i-1}^{\fixed[0.2]{\text{ }}\prime}, \mathfrak c_i^{\fixed[0.2]{\text{ }}\prime}) = \mathfrak c_{i-1}^{\fixed[0.2]{\text{ }}\prime} \land_K \mathfrak c_i^{\fixed[0.2]{\text{ }}\prime} = \varphi^\ast(\mathfrak c_{i-1}) \land_K \varphi^\ast(\mathfrak c_i) \le \mathfrak c_{i-1} \land_H \mathfrak c_i = \mathsf d_H(\mathfrak c_{i-1}, \mathfrak c_i) \le {\sf c}_H(x),
$$
which implies that the catenary degree of $\varphi(x)$ in $K$ is bounded above by ${\sf c}_H(x)$.

\ref{it:prop:cotransfer-hom(5)}
It is straightforward from Proposition \ref{prop:divisor-closed-sub}, when considering that $\varphi(H^\times) = \varphi(H)^\times$ (again, by the fact that $\varphi$ is a homomorphism).

\ref{it:prop:cotransfer-hom(6)} 
Assume that $K$ is atomic and $\varphi(H^\times) = K^\times$, and pick $y \in K \setminus K^\times$. We have to prove $y \in \varphi(H)$.
To this end, it follows from the atomicity of $K$ that $y = b_1 \cdots b_n$ for some $b_1, \ldots, b_n \in \mathscr{A}(K)$, and since $\varphi$ is an equimorphism, we get from Remark \ref{rem:covariant-transfer(1)} (and the assumption that every unit of $K$ is the image under $\varphi$ of some unit of $H$) that there are $a_1, \ldots, a_n \in \mathscr{A}(H)$, $u_1, v_1, \ldots, u_n, v_n \in H^\times$, and $\sigma \in \mathfrak{S}_n$ such that $b_i = \varphi(u_i) \fixed[0.2]{\text{ }}\varphi(a_{\sigma(i)}) \fixed[0.2]{\text{ }}\varphi(v_i)$ for every $i \in \llb 1, n \rrb$. So $y = \varphi(x)$, where $x := u_1 a_{\sigma(1)} v_1 \cdots u_n a_{\sigma(n)} v_n$.

\ref{it:prop:cotransfer-hom(7)} Suppose that $K$ is atomic, and let $x \in H \setminus H^\times$. Then $\varphi(x) \notin K^\times$ (by condition \ref{covariant-transfer(1)} of Definition \ref{def:equimorphisms}). Hence $\mathsf L_H(x) = \mathsf L_K(\varphi(x)) \ne \emptyset$, by Theorem  \ref{th:cotransfer-hom}\ref{it:prop:cotransfer-hom(1)} and the atomicity of $K$. Viz., $H$ is atomic.
\end{proof}
\begin{remark}
\label{rem:counterexample-to-divisor-closedness}
The conclusion of Theorem \ref{th:cotransfer-hom}\ref{it:prop:cotransfer-hom(6)} need not be true if $K$ is not atomic. Indeed, let $K$ be the monoid of non-negative real numbers under addition, and $H$ the submonoid of $K$ consisting of the rational numbers. Both $H$ and $K$ are reduced, divisible, cancellative, commutative monoids, and hence $\mathscr{A}(H) = \mathscr{A}(K) = \emptyset$, since every non-unit of $H$ (respectively, of $K$) is the square of a non-unit. It follows that the canonical embedding $H \to K$ is an injective equimorphism. Yet, $H$ is not a divisor-closed sub\-monoid of $K$.
\end{remark}
\begin{remark}
\label{rem:decreasing-cat-degree}
It follows from \cite[p. 506]{BaSm} and Remark \ref{rem:Baeth-Smertning-permutable-distance} that, if $\varphi: H \to K$ is a transfer homomorphism in the sense of \cite[Definition 2.1(1)]{BaSm} and $K$ is atomic, then $\mathsf c_K(\varphi(x)) \le \mathsf c_H(x)$ for all $x \in H \setminus H^\times$, which is a special case of Theorem \ref{th:cotransfer-hom}\ref{it:prop:cotransfer-hom(4)}, because $\varphi$ is a weak transfer ho\-mo\-mor\-phism by \cite[p. 483]{BaSm}, and hence an equimorphism by Remark \ref{rem:comparison-between-equis-and-weak-transfer-homs}.

Sharper results are available in the literature under stronger assumptions or for particular classes of monoids; see, e.g., \cite[Theorem 3.2.5.4]{GeHK06} for the case when $H$ and $K$ are commutative, cancellative, atomic monoids and $\varphi$ is a transfer homomorphism, or \cite[Proposition 4.8]{BaSm} for the case of an \textit{isoatomic} weak transfer homomorphism from an atomic cancellative monoid to another ($\varphi$ is isoatomic if $\varphi(a) \simeq_K \varphi(b)$, for some $a, b \in \mathscr{A}(H)$, implies $a \simeq_H b$).
\end{remark}
Our next step is to have some convenient criteria for a unit-cancellative monoid to be atomic or BF. We start with a couple of definitions, the second of which extends \cite[Definition 1.1.3.2]{GeHK06} from the case when $H$ is cancellative and commutative, but is more restrictive than analogous definitions given by Geroldinger \cite[p. 533]{Ge13} and Smertnig \cite[p. 364]{Sm16} in the cancellative setting.
\begin{definition}
	A monoid $H$ satisfies the \textit{ascending chain condition} (shortly, ACC) on principal right ideals (respectively, on principal left ideals) if, for every sequence $(a_n)_{n \ge 1}$ of elements of $H$ such that $a_n H \subseteq a_{n+1} H$ (respectively, $H a_n \subseteq H a_{n+1}$) for all $n$, there is an index $v \in \mathbf N^+$ for which $a_n H = a_v H$ (respectively, $H a_v = H a_n$) when $n \ge v$. In addition, we say that $H$ satisfies the ACCP if it satisfies the ACC on both principal right and principal left ideals.
\end{definition}
\begin{definition}
	Let $H$ be a monoid and $\lambda$ a function $H \to \mathbf{N}$. We say that $\lambda$ is a \textit{length function} (on $H$) if $\lambda(x) < \lambda(y)$ for all $x, y \in H$ such that $y = uxv$ for some $u, v \in H$ with $u \notin H^\times$ or $v \notin H^\times$.
\end{definition}
Then, we proceed to
our first theorem, which is an all-embracing generalization of \cite[Proposition 1.1.4 and Corollary 1.3.3]{GeHK06} and \cite[Lemma 3.1(1)]{FGKT}.
\begin{lemma}
\label{lem:products_and_units}
Let $H$ be a unit-cancellative monoid, and let $x, y \in H$. We have that:
\begin{enumerate}[label={\rm (\roman{*})}]
\item\label{it:lem:products_and_units(i)} $xy \in H^\times$ if and only if $x, y \in H^\times$.
\item\label{it:lem:products_and_units(ib)} If $xy = xu$ \textup{(}respectively, $yx = ux$\textup{)} for some $u \in H^\times$, then $y \in H^\times$.
\item\label{it:lem:products_and_units(ii)} $xH = yH$ \textup{(}respectively, $Hx = Hy$\textup{)} if and only if $x \in y \fixed[0.2]{\text{ }}H^\times$ \textup{(}respectively, $x \in H^\times y$\textup{)}.
\item\label{it:lem:products_and_units(iii)} If $H$ satisfies the \textup{ACC} on principal right \textup{(}respectively, principal left\textup{)} ideals and $x \in H \setminus H^\times$, then $x \in \mathscr{A}(H) \cdot H$ \textup{(}respectively, $x \in H \cdot \mathscr{A}(H)$\textup{)}.
\end{enumerate}
\end{lemma}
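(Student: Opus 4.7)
The plan is to work through the four items in order, leveraging each part in the proof of the next.

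For \textbf{(i)}, one direction is Lemma \ref{lem:basic-properties-atoms-units}\ref{it:lem:basic-properties-atoms-units(i)}. For the converse, suppose $xy \in H^\times$ and set $w := (xy)^{-1}$. Then $x(yw) = 1_H$, so $x$ is right-invertible. The key trick is to observe that
$$(ywx)(yw) = yw \cdot (xyw) = yw \cdot 1_H = yw,$$
so the unit-cancellative hypothesis, in the form ``$ba=a \Rightarrow b \in H^\times$'', yields $ywx \in H^\times$. Letting $v := (ywx)^{-1}$, the identity $(vyw)x = 1_H$ shows that $x$ is also left-invertible, hence $x \in H^\times$ by the standard argument. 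Then $y = x^{-1}(xy) \in H^\times$ as a product of units. The dual case is analogous.

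For \textbf{(ib)}, from $xy = xu$ with $u \in H^\times$ we get $x(yu^{-1}) = x$, so $yu^{-1} \in H^\times$ by unit-cancellativity, whence $y \in H^\times$. For \textbf{(ii)}, the ``if'' direction is immediate. Conversely, $xH = yH$ gives $x = yr$ and $y = xs$ for some $r,s \in H$, so $x = xsr$; unit-cancellativity yields $sr \in H^\times$, and then (i) forces both $s$ and $r$ to be units, so $x \in y H^\times$. The left-ideal version is symmetric.

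For \textbf{(iii)}, assume for a contradiction that $x \in H \setminus H^\times$ but $x \notin \mathscr{A}(H) \cdot H$. In particular $x$ is not itself an atom, so $x = x_1 y_1$ with $x_1, y_1 \notin H^\times$; and $x_1 \notin \mathscr{A}(H) \cdot H$ either, for otherwise $x = x_1 y_1 \in \mathscr{A}(H) \cdot H$. Iterating, I build a sequence $(x_n)_{n \ge 0}$ with $x_0 := x$ and $x_n = x_{n+1} y_{n+1}$ for suitable $y_{n+1} \in H \setminus H^\times$ and $x_{n+1} \in H \setminus (H^\times \cup \mathscr{A}(H)\cdot H)$. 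This produces the ascending chain $x_0 H \subseteq x_1 H \subseteq x_2 H \subseteq \cdots$, which by the ACC on principal right ideals stabilizes: $x_n H = x_{n+1} H$ for some $n$. By (ii), $x_n = x_{n+1} u$ for some $u \in H^\times$, and comparing this with $x_n = x_{n+1} y_{n+1}$, part (ib) gives $y_{n+1} \in H^\times$, contradicting the choice of $y_{n+1}$. The dual statement about principal left ideals is proved by swapping sides throughout.

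I expect the only genuinely non-routine step to be the ``magic'' computation in (i) that converts the relation $xyw = 1_H$ into an equation of the form $ba = a$; everything else follows mechanically from unit-cancellativity, the earlier parts of the lemma, and the ACC hypothesis.
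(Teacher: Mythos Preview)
Your proof is correct and follows essentially the same route as the paper's. The only cosmetic differences are that in (i) the paper applies unit-cancellativity to the identity $x(yux)=x$ rather than your $(ywx)(yw)=yw$, and in (iii) the paper phrases the argument via a $\subseteq$-maximal element of $\{zH : z \notin H^\times,\ z \notin \mathscr A(H)\cdot H\}$ rather than building the ascending chain explicitly; both reductions land on the same contradiction through (ii) and (ib).
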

\begin{proof}
\ref{it:lem:products_and_units(i)} The ``if'' part is trivial, see also Lemma \ref{lem:basic-properties-atoms-units}\ref{it:lem:basic-properties-atoms-units(i)}.
As for the other direction, assume $xy$ is a unit, and let $u \in H$ such that $xyu = uxy = 1_H$. This means that $x$ is right-invertible and $y$ is left-invertible (a right inverse of $x$ being given by $yu$, and a left inverse of $y$ by $ux$). Moreover, we have $xyux = x$, which implies, by the unit-cancellativity of $H$, that $v := yux$ is a unit, and hence
$(v^{-1} yu) x = y(uxv^{-1}) = 1_H$.
So, in conclusion, we see that both $x$ and $y$ are right- and left-invertible, hence are invertible.

\ref{it:lem:products_and_units(ib)} Suppose that $xy = xu$ for some $u \in H^\times$ (the other case is similar). Then $x = xyu^{-1}$, so we get by the unit-cancellativity of $H$ and point \ref{it:lem:products_and_units(i)} that $y \in H^\times$.

\ref{it:lem:products_and_units(ii)} It is obvious that, if $x \in y \fixed[0.2]{\text{ }}H^\times$, then $xH = yH$. So assume $xH = yH$. Then $x=ya$ and $y = xb$ for some $a, b \in H$, with the result that $y = yab$. Because $H$ is unit-cancellative, it follows that $ab \in H^\times$, and hence $a, b \in H^\times$ by point \ref{it:lem:products_and_units(i)} above. This yields $x \in y \fixed[0.2]{\text{ }}H^\times$ and finishes the proof, since the analogous statement for principal left ideals can be established in a similar way (we omit details).

\ref{it:lem:products_and_units(iii)} We prove the statement only for principal right ideals, as the other case is similar. To this end, assume for a contradiction that the claim is false. Then the set
$$
\Omega := \{zH: z \in H \setminus H^\times \text{ and } z \notin \mathscr{A}(H) \cdot H\}
$$
is non-empty. So, using that $H$ satisfies the ACC on principal right ideals, $\Omega$ has a $\subseteq$-maximal element, say $\bar zH$.
Clearly, $\bar z$ is neither a unit nor an atom (of $H$), because $\bar z H \in \Omega$. Therefore, $\bar z = ab$ for some $a,b \in H \setminus H^\times$, and it is clear that $a \notin \mathscr{A}(H) \cdot H$, otherwise we would have $\bar z \in \mathscr{A}(H) \cdot H$. Thus, $aH \in \Omega$ and $\bar z H \subseteq aH$. But $\bar zH$ is a $\subseteq$-maximal element of $\Omega$, so necessarily $\bar z H = a H$.

It then follows from point \ref{it:lem:products_and_units(ii)} and the above that $ab = \bar z = a u$ for some $u \in H^\times$, which is however a contradiction, as it implies $b \in H^\times$ by point \ref{it:lem:products_and_units(ib)}.
\end{proof}
\begin{theorem}
\label{th:normable_implies_atomic}
Let $H$ be a monoid and $M$ a submonoid of $H$ with $M^\times = M \cap H^\times$. The following hold:
\begin{enumerate}[label={\rm (\roman{*})}]
\item\label{it:th:normable(i)} Suppose that $H$ is unit-cancellative and satisfies the \textup{ACCP}. Then $H$ is atomic.
\item\label{it:th:normable(ii)} If $H$ is unit-cancellative, then so is $M$.
\item\label{it:th:normable(iii)} Assume that $H$ has a length function. Then $H$ is unit-cancellative and satisfies the \textup{ACCP}.
\item\label{it:th:normable(iv)} If $H$ has a length function $\lambda$, then $M$ is a \BF-monoid and $\sup \mathsf L_M(x) \le \lambda(x)$ for all $x \in M$.
\end{enumerate}
\end{theorem}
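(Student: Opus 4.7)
The plan is to treat the four parts in the natural order, using (i)--(iii) as ingredients for (iv), and to keep (ii) and (iii) as short direct verifications while concentrating the work in (i).

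Part (ii) is immediate: if $xy = x$ with $x, y \in M$, then unit-cancellativity of $H$ gives $y \in H^\times$, hence $y \in M \cap H^\times = M^\times$ (the case $yx = x$ is symmetric). For (iii), I would first note that if $xy = x$ in $H$ with $y \notin H^\times$, writing $x = 1_H \cdot x \cdot y$ would force $\lambda(x) < \lambda(x)$, an absurdity; so $H$ is unit-cancellative. To obtain the ACC on principal right ideals, take an ascending chain $a_1 H \subseteq a_2 H \subseteq \cdots$ and factor $a_n = a_{n+1} b_n$: whenever $a_n H \subsetneq a_{n+1} H$, the element $b_n$ cannot be a unit (for otherwise $a_{n+1} = a_n b_n^{-1} \in a_n H$), and so the length-function axiom forces $\lambda(a_{n+1}) < \lambda(a_n)$; strict growth at infinitely many indices would produce an infinite strictly decreasing sequence in $\mathbf N$, which is impossible. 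The argument for left principal ideals is symmetric.

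For (i), I would argue by contradiction. Assume the set $\Omega := \{xH : x \in H \setminus H^\times \text{ and } x \text{ is not a product of atoms}\}$ is non-empty and, using the ACC on principal right ideals, pick a $\subseteq$-maximal element $\bar x H \in \Omega$. By Lemma \ref{lem:products_and_units}\ref{it:lem:products_and_units(iii)} (applied via the ACC on principal left ideals), write $\bar x = ya$ with $y \in H$ and $a \in \mathscr A(H)$. Here $y$ must be a non-unit, for otherwise $\bar x = ya$ would itself be an atom by Lemma \ref{lem:basic-properties-atoms-units}\ref{it:lem:basic-properties-atoms-units(ii)}, contradicting $\bar x H \in \Omega$. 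From $\bar x = ya$ we have $\bar x H \subseteq yH$, and the inclusion is strict: if instead $\bar x H = yH$, then $y \in \bar x H$, so $y = \bar x h$ for some $h \in H$, whence $y = yah$, unit-cancellativity forces $ah \in H^\times$, and Lemma \ref{lem:products_and_units}\ref{it:lem:products_and_units(i)} then makes $a$ a unit, contradicting $a \in \mathscr A(H)$. By the maximality of $\bar x H$ in $\Omega$, the non-unit $y$ satisfies $yH \notin \Omega$, so $y$ is a product of atoms, and then so is $\bar x = ya$, the desired contradiction.

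Finally, for (iv), the key observation is that the restriction $\lambda|_M$ is a length function on $M$: the assumption $M^\times = M \cap H^\times$ means every non-unit of $M$ is a non-unit of $H$, so the defining inequality for $\lambda$ transfers directly from $H$ to $M$. Applying (iii) to $M$ yields that $M$ is unit-cancellative and satisfies the ACCP, and then (i) gives that $M$ is atomic. To bound lengths, fix a factorization $x = a_1 \cdots a_k$ in $M$ with $a_i \in \mathscr A(M)$ and set $x_i := a_1 \cdots a_i$ with $x_0 := 1_H$: each $a_i$ is a non-unit of $H$, so the identity $x_i = 1_H \cdot x_{i-1} \cdot a_i$ yields $\lambda(x_{i-1}) < \lambda(x_i)$, and inductively $k \le \lambda(x)$. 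Hence $\sup \mathsf L_M(x) \le \lambda(x) < \infty$, and atomicity delivers the BF property. I expect the main obstacle to be the strict-inclusion step in (i), where unit-cancellativity and the atom property of $a$ must be combined through Lemma \ref{lem:products_and_units}\ref{it:lem:products_and_units(i)} to exclude the degenerate case $\bar x H = yH$.
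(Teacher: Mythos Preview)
Your proof is correct and follows essentially the same approach as the paper's; the only noticeable difference is that in part (i) the roles of left and right are swapped (the paper takes $\Omega$ to consist of principal \emph{left} ideals and factors an atom on the left via Lemma~\ref{lem:products_and_units}\ref{it:lem:products_and_units(iii)}, while you use right ideals and factor on the right), and your strict-inclusion step argues directly from unit-cancellativity and Lemma~\ref{lem:products_and_units}\ref{it:lem:products_and_units(i)} rather than via Lemma~\ref{lem:products_and_units}\ref{it:lem:products_and_units(ii)} and \ref{it:lem:products_and_units(ib)}. Parts (ii)--(iv) match the paper essentially line for line.
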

\begin{proof}
\ref{it:th:normable(i)} We proceed along the lines of the proof of \cite[Proposition 3.1]{Sm13}. To start with, suppose for a contradiction that the set
$$
\Omega := \{Hx: x \in H \setminus H^\times \text{ and } x \text{ is not a (finite) product of atoms of }H\}
$$
is non-empty. Then, using that $H$ satisfies the ACC on principal \textit{left} ideals, $\Omega$ must have a maximal element, say $H\tilde{x}$. Of course, $\tilde{x}$ is neither a unit nor an atom (of $H$), so we get from Lemma \ref{lem:products_and_units}\ref{it:lem:products_and_units(iii)} that $\tilde x = ax$ for some $a \in \mathscr{A}(H)$ and $x \in H$, where we have used that $H$ also satisfies the ACC on principal \textit{right} ideals. This shows that $H\tilde x \subseteq Hx$, and we claim that $H\tilde x \subsetneq Hx$.

Indeed, assume to the contrary that $H\tilde x = Hx$. Then we infer from Lemma \ref{lem:products_and_units}\ref{it:lem:products_and_units(ii)} that $H^\times\tilde x = H^\times x$, and hence $ax = \tilde x = ux$ for some $u \in H^\times$. Yet this is impossible, as it implies by Lemma \ref{lem:products_and_units}\ref{it:lem:products_and_units(ib)} that $a$ is a unit, and hence not an atom, of $H$. Consequently, we see that $H \tilde{x} \subsetneq H x$ (as was claimed).

It follows that $Hx \notin \Omega$, because $H\tilde{x}$ is a $\subseteq$-maximal element of $\Omega$. But we derive from Lemma \ref{lem:basic-properties-atoms-units}\ref{it:lem:basic-properties-atoms-units(ii)} that $x$ is a not a unit of $H$, as $H \tilde x$ means, in particular, that $\tilde x$ is not an atom. Therefore, $Hx \notin \Omega$ only if $x = a_1 \cdots a_n$ for some $a_1, \ldots, a_n \in \mathscr{A}(H)$, which, however, is still a contradiction, since it implies that $\tilde x$ is a product of atoms of $H$ (recall from the above that $\tilde x = ax$).

\ref{it:th:normable(ii)} Assume $H$ is unit-cancellative, and let $x, y \in M$ such that $xy = x$ or $yx = x$ in $M$. Then $xy = x$ or $yx = x$ in $H$ (since $M$ is a submonoid of $H$), and hence $x \in H^\times$. So, using that $M^\times = M \cap H^\times$, it follows that $x \in M^\times$, and we can conclude that $M$ is unit-cancellative.

\ref{it:th:normable(iii)} Let
$\ell: H \to \mathbf{N}$ be a length function on $H$, and
suppose for a contradiction that $H$ is not unit-cancellative. Then there
exist non-units $x,y \in H$ such that $xy = x$ or $yx=x$. But this is impossible, since $xy=x$ implies
$\ell(x) = \ell(xy) > \ell(x)$, and the other case is similar.

So, it remains to prove that $H$ satisfies the ACCP. For, suppose to the contrary that there exists a sequence $(a_n)_{n \ge 1}$ of elements of $H$ such that $a_n H \subsetneq a_{n+1} H$ (respectively, $H a_n \subsetneq H a_{n+1}$) for all $n \in \mathbf N^+$. Then, for each $n \in \mathbf N^+$ we have that $a_n = a_{n+1} v_n$ (respectively, $a_n = v_n a_{n+1}$) for some $v_n \in H \setminus H^\times$, with the result that $\ell(a_{n+1}) < \ell(a_n)$. But this is impossible.

\ref{it:th:normable(iv)}  
Because $u \in M \setminus M^\times$ only if $u \notin H^\times$ (by hypothesis), it is obvious that the restriction of $\lambda$ to $M$ is a length function on $M$. So we have by point \ref{it:th:normable(iii)} that $M$ is unit-cancellative and satisfies the ACCP. Consequently, we obtain from \ref{it:th:normable(i)} that $M$ is atomic, and we are left to show that it is actually \BF.

To this end, let $x \in M \setminus M^\times$, and pick $k \in \mathbf N^+$ and $a_1, \ldots, a_k \in \mathscr{A}(M)$ such that $x = a_1 \cdots a_k$. Since it is immediate from the definition of a length function that $\lambda(a) \ge 1$ for every $a \in M \setminus M^\times$, it is seen by induction that $\lambda(x) \ge k$. Thus $\sup {\sf L}_M(x) \le \lambda(x)$, and we are done.
\end{proof}
We conclude the section with a corollary generalizing \cite[Proposition 1.3.2]{GeHK06} from cancellative, commutative monoids to arbitrary monoids, cf. \cite[Lemma 6.1]{Ge13} and \cite[Lemma 3.6]{Sm16} for some analogues in the cancellative setting; and with an elementary proposition showing that unit-cancellative monoids are Dedekind-finite.
\begin{corollary}
\label{cor:BFness-of-unit-cancellative-monoids}
Let $H$ be a monoid.
The following are equivalent:
\begin{enumerate}[label={\rm (\alph{*})}]
\item\label{it:cor:length_fncs_characterize_BF(i)} $H$ is a \BF-monoid.
\item\label{it:cor:length_fncs_characterize_BF(ii)} $\bigcap_{n \ge 1} (H \setminus H^\times)^n = \emptyset$.
\item\label{it:cor:length_fncs_characterize_BF(iii)} $H$ has a length function.
\end{enumerate}
In particular, if any of these conditions is satisfied, then $H$ is unit-cancellative.
\end{corollary}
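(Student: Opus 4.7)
The proof follows the cycle $(c) \Rightarrow (a) \Rightarrow (b) \Rightarrow (c)$, with unit-cancellativity dropping out of the first and third implications. For $(c) \Rightarrow (a)$ together with the ``In particular'' clause, I would simply apply Theorem \ref{th:normable_implies_atomic}\ref{it:th:normable(iii)}--\ref{it:th:normable(iv)} with $M := H$: the hypothesis $M^\times = M \cap H^\times$ is trivial in this case, and the theorem yields at once that $H$ is both a BF-monoid and unit-cancellative.

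For $(a) \Rightarrow (b)$, suppose for contradiction that some $x$ lies in $\bigcap_{n \ge 1}(H \setminus H^\times)^n$. Then $x$ is a non-unit admitting, for every $n$, a factorization $x = y_1 \cdots y_n$ with each $y_i \in H \setminus H^\times$. Since $(a)$ forces $H$ to be atomic, each $y_i$ factors as a product of at least one atom, so $x$ has atomic factorizations of unbounded length, contradicting the finiteness of $\mathsf L_H(x)$.

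The bulk of the argument lies in $(b) \Rightarrow (c)$. I would first observe that $(b)$ forces unit-cancellativity: were $xy = x$ (the case $yx = x$ is symmetric) for some $y \notin H^\times$, iterating yields $x = xy^n$ for every $n$; here $x \in H^\times$ would force $y = 1_H$, so one must have $x \notin H^\times$, which places $x$ in $(H \setminus H^\times)^{n+1}$ for every $n$, against $(b)$. Unit-cancellativity, via Lemma \ref{lem:products_and_units}\ref{it:lem:products_and_units(i)}, guarantees that the product of two non-units is a non-unit; regrouping two consecutive factors then gives $(H \setminus H^\times)^{n+1} \subseteq (H \setminus H^\times)^n$ for all $n \ge 1$, and combining this monotonicity with $(b)$ shows that the set $\{n \ge 1 : x \in (H \setminus H^\times)^n\}$ is bounded for every $x$. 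Accordingly,
$$
\lambda(x) := \sup\{n \in \mathbf N : x \in (H \setminus H^\times)^n\}
$$
(with $(H \setminus H^\times)^0 := \{1_H\}$ and $\sup \emptyset := 0$) is a well-defined $\mathbf N$-valued function on $H$.

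Verifying that $\lambda$ is a length function is the main obstacle. Given $y = uxv$ with $u \notin H^\times$ or $v \notin H^\times$, I would run a case analysis on whether $u$ and $v$ lie in $H^\times$: any factorization $x = x_1 \cdots x_n$ into non-units extends to a factorization of $y$ into at least $n+1$ non-units by absorbing a unit factor adjacent to $x$ into $x_1$ or $x_n$ (whose product with a unit remains a non-unit, by Lemma \ref{lem:basic-properties-atoms-units}\ref{it:lem:basic-properties-atoms-units(ii)} or directly by unit-cancellativity), while when $\lambda(x) = 0$ the same device shows that $y$ is itself a non-unit, whence $\lambda(y) \ge 1$. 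In either case $\lambda(y) \ge \lambda(x) + 1 > \lambda(x)$, and the proof is complete.
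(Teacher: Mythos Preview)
Your argument is correct and mirrors the paper's almost exactly; one minor point is that Lemma~\ref{lem:basic-properties-atoms-units}\ref{it:lem:basic-properties-atoms-units(ii)} concerns atoms, not arbitrary non-units, so the right justification is your alternative ``directly by unit-cancellativity'' (i.e., Lemma~\ref{lem:products_and_units}\ref{it:lem:products_and_units(i)}). The paper's verification that $\lambda$ is a length function avoids your case split by noting once the superadditivity $\lambda(u)+\lambda(x)+\lambda(v)\le\lambda(y)$ for $y=uxv$ (with the convention $\mathfrak m^0:=H^\times$), which immediately yields $\lambda(x)<\lambda(y)$ as soon as $u$ or $v$ is a non-unit.
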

\begin{proof}
To ease notation, set $\mathfrak m := H \setminus H^\times$ and $\mathfrak j := \bigcap_{n \ge 1} \mathfrak m^n$. It is sufficient to prove that \ref{it:cor:length_fncs_characterize_BF(i)} $\Rightarrow$ \ref{it:cor:length_fncs_characterize_BF(ii)} and \ref{it:cor:length_fncs_characterize_BF(ii)} $\Rightarrow$ \ref{it:cor:length_fncs_characterize_BF(iii)}, since \ref{it:cor:length_fncs_characterize_BF(iii)} $\Rightarrow$ \ref{it:cor:length_fncs_characterize_BF(i)} is straightforward from Theorem \ref{th:normable_implies_atomic}\ref{it:th:normable(iv)}, while the ``In particular'' part is a consequence of \ref{it:cor:length_fncs_characterize_BF(iii)} and Theorem \ref{th:normable_implies_atomic}\ref{it:th:normable(iii)}.

\ref{it:cor:length_fncs_characterize_BF(i)} $\Rightarrow$ \ref{it:cor:length_fncs_characterize_BF(ii)}: Pick $x \in H$. If $x \in \mathfrak m^n$, then $x = x_1 \cdots x_n$ for some $x_1, \ldots, x_n \in H \setminus H^\times$, which yields $\sup \mathsf L_H(x) \ge n$, because $H$ is BF, and hence $x_i$ is a non-empty product of atoms of $H$ for each $i \in \llb 1, n \rrb$. But $H$ being a BF-monoid also implies that ${\sf L}_H(x)$ is finite. Consequently, there must exist $n \in \mathbf N^+$ such that $x \notin \mathfrak m^n$, with the result that $\mathfrak j = \emptyset$.

\ref{it:cor:length_fncs_characterize_BF(ii)} $\Rightarrow$ \ref{it:cor:length_fncs_characterize_BF(iii)}: To start with, we show that $H$ is unit-cancellative. For, assume to the contrary that $x = xy$ (respectively, $x=yx$) for some $x \in H$ and $y \in H \setminus H^\times$. Then $x$ is a non-unit, and $x = xy^k \in \mathfrak m^{k+1}$ (respectively, $x = y^k x \in \mathfrak m^{k+1}$) for every $k \in \mathbf N$. So $x \in \mathfrak j \ne \emptyset$, a contradiction.

With this in hand, let $\bar x \in H$. Since $\mathfrak j = \emptyset$, there exists $v \in \mathbf N^+$ such that $\bar x \notin \mathfrak m^v$, and we claim that $\bar x \notin \mathfrak m^n$ for every $n \ge v$. Indeed, suppose that $\bar x = x_1 \cdots x_n$ for some $n \ge v$ and $x_1, \ldots, x_n \in H \setminus H^\times$, and set $y_i := x_i$ for $i \in \llb 1, v-1 \rrb$ and $y_v := x_v \cdots x_n$. Then, using that $H$ is unit-cancellative, we obtain from Lemma \ref{lem:products_and_units}\ref{it:lem:products_and_units(i)} that $y_1, \ldots, y_v \in H \setminus H^\times$, and hence $\bar x \in \mathfrak m^v$, which is again a contradiction.

It follows that the function $\lambda: H \to \mathbf N: x \mapsto \sup \fixed[-0.2]{\text{ }}\{n \in \NNb^+: x \in \mathfrak m^n\}$ is well defined, because the set $\{n \in \mathbf N^+: x \in \mathfrak m^n\}$ is finite for all $x \in H$. We want to show that $\lambda$ is a length function (on $H$).

In fact, let $u, v, x, y \in H$ with $y = uxv$. Again by Lemma \ref{lem:products_and_units}\ref{it:lem:products_and_units(i)}, it is clear that $y \in \mathfrak m^{\lambda(u) + \lambda(x) + \lambda(v)}$, where $\mathfrak m^0 := H^\times$. Therefore $\lambda(u) + \lambda(x) + \lambda(v) \le \lambda(y)$, and $\lambda(x) = \lambda(y)$ only if $\lambda(u) = \lambda(v) = 0$, which is, in turn, equivalent to $u, v \in H^\times$. This yields that $\lambda$ is a length function.
\end{proof}
As a side remark, we get from Corollary \ref{cor:BFness-of-unit-cancellative-monoids} that a monoid is BF only if it is unit-cancellative: This extends to the non-commutative setting an observation from the introduction of \cite[\S{ }3]{FGKT}.
\begin{proposition}
\label{prop:Dedekind-finite}
Let $H$ be a monoid. Then $H$ is Dedekind-finite if and only if $x, y \in H^\times$ for all $x, y \in H$ with $xy \in H^\times$. In particular, $H$ is Dedekind-finite if $\mathscr{A}(H) \ne \emptyset$ or $H$ is unit-cancellative.
\end{proposition}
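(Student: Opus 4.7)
The plan is to prove the equivalence directly from the definitions, and then to derive the ``In particular'' part by quoting the two lemmas already at our disposal. Both directions are short, so the main difficulty is really just keeping track of one-sided inverses.

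For the easy direction ($\Leftarrow$), I will assume that $xy \in H^\times$ always forces $x, y \in H^\times$, and take $x, y \in H$ with $xy = 1_H$. Then $xy \in H^\times$, so $x, y \in H^\times$ by hypothesis. Since $y$ has $x$ as a right inverse and is itself invertible, the uniqueness of the inverse in a group forces $y = x^{-1}$, and hence $yx = 1_H$. So $H$ is Dedekind-finite.

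For the forward direction ($\Rightarrow$), I will assume $H$ is Dedekind-finite and pick $x, y \in H$ with $xy \in H^\times$. Setting $u := (xy)^{-1}$, we get $x(yu) = (xy)u = 1_H$, so $x$ has $yu$ as a right inverse. Dedekind-finiteness then yields $(yu)x = 1_H$, which shows that $x \in H^\times$ with $x^{-1} = yu$. Then $y = x^{-1}(xy)$ is a product of two units, hence $y \in H^\times$ as well. The only real point to watch is that Dedekind-finiteness is stated for pairs $(x,y)$ with $xy = 1_H$, and here I apply it to the pair $(x, yu)$, which is fine since $x \cdot yu = 1_H$.

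For the ``In particular'' clause, it suffices to verify the condition ``$xy \in H^\times$ implies $x, y \in H^\times$'' in both cases. If $\mathscr{A}(H) \ne \emptyset$, this is exactly the content of the converse part of Lemma~\ref{lem:basic-properties-atoms-units}\ref{it:lem:basic-properties-atoms-units(i)}; and if $H$ is unit-cancellative, it is Lemma~\ref{lem:products_and_units}\ref{it:lem:products_and_units(i)}. Either way, the equivalence already proved then gives that $H$ is Dedekind-finite, completing the argument.
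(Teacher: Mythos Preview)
Your proof is correct and follows essentially the same approach as the paper's: both directions of the equivalence are handled the same way (applying Dedekind-finiteness to the pair $(x, yu)$ with $u = (xy)^{-1}$ for the forward direction, and reading off $y = x^{-1}$ for the converse), and the ``In particular'' clause is derived from Lemmas~\ref{lem:basic-properties-atoms-units}\ref{it:lem:basic-properties-atoms-units(i)} and~\ref{lem:products_and_units}\ref{it:lem:products_and_units(i)} just as in the paper.
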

\begin{proof}
Suppose first that $H$ is Dedekind-finite, and let $x,y \in H$ such that $xy \in H^\times$. Then there exists $z \in H$ for which $xyz = zxy = 1_H$. It follows that $yzx = 1_H$, which shows that $x$ and $y$ are units (since they are both left- and right-invertible).

Conversely, assume that $x, y \in H^\times$ whenever $x, y \in H$ and $xy \in H^\times$. Then $xy = 1_H$ for some $x, y \in H$ yields $x,y \in H^\times$, and hence $y = x^{-1} xy = x^{-1}\cdot 1_H = x^{-1}$, which implies $yx = 1_H$.

The ``In particular'' part is now straightforward by the above and Lemmas \ref{lem:basic-properties-atoms-units}\ref{it:lem:basic-properties-atoms-units(i)} and \ref{lem:products_and_units}\ref{it:lem:products_and_units(i)}.
\end{proof}
\section{Power monoids}
\label{sec:power_monoid}
In this and the subsequent section, we apply most of the ideas developed in \S{ }\ref{sec:monoids} to a specific class of structures that are, in a way, ``extremely non-cancellative''. To this end, we make the following:
\begin{definition}
\label{def:power-monoids}
Let $H$ be a (multiplicatively written) monoid. We use $\mathcal P_\fin(H)$ for the set of all \textit{non-empty} finite subsets of $H$, and we denote by $\cdot$ the binary operation
$$
\PPc_{\rm fin}(H) \times \PPc_{\rm fin}(H) \to \PPc_{\rm fin}(H): (X, Y) \mapsto XY,
$$
where $
XY := X \cdot Y := \{xy: (x,y) \in X \times Y\}$.
Moreover, we define
$$
\mathcal P_{\fun}(H) := \{X \in \mathcal P_\fin(H): X \cap H^\times \ne \emptyset\}.
$$
It is trivial that $\PPc_{\rm fin}(H)$, endowed with the above operation, forms a monoid,
with the identity given by the singleton $\{1_H\}$, and $\PPc_{\fun}(H)$ is a sub\-monoid of $\PPc_{\rm fin}(H)$.
Accordingly, we call $\PPc_{\rm fin}(H)$ and $\mathcal P_{\fun}(H)$, respectively, the \textit{power monoid} and \textit{restricted power monoid} of $H$.
\end{definition}
Our goal for the remainder of the paper is, in fact, to investigate some of the algebraic and arithmetic properties of power monoids, and to link them to corresponding properties of the re\-strict\-ed power monoid of $(\mathbf N, +)$, which we will denote by $\mathcal P_{\fuN}(\mathbf{N})$ and always write additively.
We start with a few basic results.
\begin{proposition}
\label{prop:basic-properties-of-power-monoids}
Let $H$ be a monoid. The following hold:
\begin{enumerate}[label={\rm (\roman{*})}]
\item\label{it:prop:basic-properties-of-power-monoids(i)} $\PPc_{\rm fin}(H)$ and $\mathcal P_{\fun}(H)$ are cancellative if and only if $H = \{1_H\}$.
\item\label{it:prop:basic-properties-of-power-monoids(ii)} $\mathcal P_\fin(H)^\times = \mathcal P_\fun(H)^\times = \fixed[-0.15]{\text{ }} \big\{\{u\}: u \in H^\times\bigr\}$.
\item\label{it:prop:basic-properties-of-power-monoids(iv)} Let $H$ be Dedekind-finite. Then $\mathcal P_{\fun}(H)$ is a divisor-closed submonoid of $\mathcal P_\fin(H)$. 
  In particular, $\mathscr{L}(\mathcal P_{\fun}(H)) \subseteq \mathscr{L}(\mathcal P_\fin(H))$ and $\cat(\PPc_{\fun}(H)) \subseteq \cat(\PPc_{\fin}(H))$.
\item\label{it:prop:basic-properties-of-power-monoids(iii)} Let $a \in H$. Then $\{a\}$ is an atom of $\mathcal P_\fin(H)$ only if $a$ is an atom of $H$, and the converse is also true if $H$ is strongly unit-cancellative.
\item\label{it:prop:basic-properties-of-power-monoids(v)} Assume $H$ is cancellative. Then 
$\mathcal P_\fin(H)$ is atomic \textup{(}respectively, a \BF-monoid\textup{)} only if so is $H$.
\end{enumerate}
\end{proposition}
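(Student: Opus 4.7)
My plan is to treat each of the five assertions by direct unwinding of definitions, drawing on the results of Section 2 where relevant.

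\textbf{Units and cancellativity.} I would first establish the characterization $\PPc_\fin(H)^\times = \PPc_\fun(H)^\times = \{\{u\} : u \in H^\times\}$, since it feeds into everything else. The inclusion $\supseteq$ is immediate from $\{u\}\cdot\{u^{-1}\} = \{u^{-1}\}\cdot\{u\} = \{1_H\}$, together with $u \in \{u\} \cap H^\times$. For the reverse, if $X \cdot Y = Y \cdot X = \{1_H\}$ in $\PPc_\fin(H)$, then every product of an element of $X$ with one of $Y$ equals $1_H$, so $|X| = |Y| = 1$; writing $X = \{x\}$, $Y = \{y\}$ gives $xy = yx = 1_H$, i.e., $x \in H^\times$. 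This already settles the cancellativity dichotomy: if $H = \{1_H\}$, both monoids are trivial; otherwise, for any $a \in H \setminus \{1_H\}$, either $a^2 \in \{1_H, a\}$, in which case $X := \{1_H, a\}$ satisfies $X \cdot X = X = X \cdot \{1_H\}$ with $X \ne \{1_H\}$, or else the triple $X = \{1_H, a, a^2\}$, $Y = \{1_H, a^2\}$, $Z = \{1_H, a\}$ gives $X \cdot Z = Y \cdot Z = \{1_H, a, a^2, a^3\}$ while $X \ne Y$. All chosen sets contain $1_H$, so the same examples witness non-cancellativity of $\PPc_\fun(H)$.

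\textbf{Divisor-closedness and singleton atoms.} For the divisor-closedness claim, assume $H$ is Dedekind-finite and $A \cdot X \cdot B = Y \in \PPc_\fun(H)$. Choose $u \in Y \cap H^\times$ and write $u = axb$ with $a \in A$, $x \in X$, $b \in B$. By Proposition \ref{prop:Dedekind-finite}, $(ax) \cdot b \in H^\times$ forces $ax, b \in H^\times$, and a second application gives $a, x \in H^\times$; hence $X \cap H^\times \ne \emptyset$. The containments of systems of lengths and catenary sets then drop out of Proposition \ref{prop:divisor-closed-sub}. For the characterization of singleton atoms, the ``only if'' direction is immediate: $\{a\}$ being an atom forces $a \notin H^\times$ by the units claim, and a non-trivial factorization $a = xy$ with $x, y \in H \setminus H^\times$ would pull back to $\{a\} = \{x\}\cdot\{y\}$ with $\{x\}, \{y\}$ non-units. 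The converse, assuming strong unit-cancellativity, is the technical core. Given $\{a\} = X \cdot Y$ with $a \in \mathscr{A}(H)$, every $xy \in X \cdot Y$ equals the atom $a$, so at least one of $x$, $y$ lies in $H^\times$; a short case split rules out the simultaneous existence of a non-unit in $X$ and a non-unit in $Y$ (otherwise their product would give a non-trivial factorization of $a$), so $X \subseteq H^\times$ or $Y \subseteq H^\times$. Assuming $X \subseteq H^\times$ and fixing $y_0 \in Y$, the identity $x y_0 = x_0 y_0$ for $x, x_0 \in X$ rewrites as $(x_0^{-1} x) y_0 = y_0$, and strong unit-cancellativity yields $x_0^{-1} x = 1_H$; hence $X = \{x_0\}$ is a singleton unit, and $\{a\} = \{x_0\} \cdot Y$ is a trivial factorization.

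\textbf{Transfer of atomicity and BFness.} For the final item, the key observation is that, when $H$ is cancellative, every factorization $\{a\} = X_1 \cdots X_k$ in $\PPc_\fin(H)$ has each $X_i$ a singleton: fixing $(x_1, \ldots, x_k) \in X_1 \times \cdots \times X_k$ with $x_1 \cdots x_k = a$, replacing any $x_j$ by another $x_j' \in X_j$ still gives $a$, and two-sided cancellation forces $x_j' = x_j$. Because cancellative monoids are strongly unit-cancellative, the singleton atoms claim then yields $\mathsf L_{\PPc_\fin(H)}(\{a\}) = \mathsf L_H(a)$ for every $a \in H \setminus H^\times$, so atomicity and BFness transfer from $\PPc_\fin(H)$ to $H$ element by element. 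The main obstacle throughout lies in the converse of the atoms claim, where strong unit-cancellativity must be exploited to collapse a factorization $\{a\} = X \cdot Y$ into one involving a singleton unit; once this is in place, the remaining items follow routinely from the definitions and the results of Section 2.
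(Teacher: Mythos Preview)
Your proposal is correct and follows essentially the same approach as the paper's proof. The only differences are organizational: in part \ref{it:prop:basic-properties-of-power-monoids(iii)} you argue directly (show one factor is contained in $H^\times$, then collapse it to a singleton) whereas the paper argues by contrapositive (assume both factors are non-units and derive that $a$ is not an atom), and in part \ref{it:prop:basic-properties-of-power-monoids(v)} you spell out the equality $\mathsf L_{\PPc_\fin(H)}(\{a\}) = \mathsf L_H(a)$ while the paper leaves this implicit; in each case the underlying idea is identical.
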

\begin{proof}
\ref{it:prop:basic-properties-of-power-monoids(i)} The ``if'' part is obvious, and of course $\mathcal P_{\fin}(H)$ is cancellative only if so is $\mathcal P_{\fun}(H)$. Therefore, suppose for a contradiction that $\mathcal P_{\fun}(H)$ is cancellative, but $H \ne \{1_H\}$. Accordingly, let $x \in H \setminus \{1_H\}$.
We have $1_H \ne x^2 \ne x$: Otherwise, $\{1_H, x\} \cdot \{1_H\} = \{1_H, x\} \cdot \{1_H, x\}$, yet $\{1_H, x\} \ne \{1_H\}$, which is not possible, by the cancellativity of $\mathcal P_{\fun}(H)$.
But this again leads to a contradiction, because it implies $\{1_H, x^2\} \ne \{1_H, x, x^2\}$, though
$\{1_H, x^2\} \cdot \{1_H, x\} = \{1_H, x, x^2\} \cdot \{1_H, x\}$.

\ref{it:prop:basic-properties-of-power-monoids(ii)} It is trivial that $\big\{\{u\}: u \in H^\times\bigr\} \fixed[-0.15]{\text{ }} \subseteq \mathcal P_\fun(H)^\times \subseteq \mathcal P_\fin(H)^\times$, and it only remains to show $\mathcal P_\fin(H)^\times \subseteq \fixed[-0.15]{\text{ }} \big\{\{u\}: u \in H^\times\bigr\}$. For, let $U \in \mathcal P_\fin(H)^\times$. Then $UV = VU = \{1_H\}$ for some $V \in \mathcal P_\fin(H)$, and hence, for every $u \in U$, there can be found $v, w \in H$ such that $vu = uw = 1_H$. That is, every element of $U$ is left- and right-invertible, and hence invertible. It is thus clear that $UV = \{1_H\}$ only if $1 = |UV| \ge |U|$. So, $U$ is a one-element subset of $H^\times$, and we are done.

\ref{it:prop:basic-properties-of-power-monoids(iv)}
Let $X \in \PPc_{\fun}(H)$ and $Y \in \PPc_\fin(H)$ such that $UYV = X$ for some $U,V \in \PPc_\fin(H)$ (namely, $Y \mid X$ in $\PPc_\fin(H)$), and using that $X \in \PPc_{\fun}(H)$, pick $x \in X \cap H^\times$. Then, $x = uyv$ for some $u \in U$, $y \in Y$, and $v \in V$. So, by Proposition \ref{prop:Dedekind-finite}, $y$ is a unit of $H$, because $H$ is Dedekind-finite.
It follows that $\PPc_{\fun}(H)$ is a divisor-closed submonoid of $\mathcal P_\fin(H)$, and the rest is a consequence of Proposition \ref{prop:divisor-closed-sub}.

\ref{it:prop:basic-properties-of-power-monoids(iii)} Let $a = xy$ for some $x, y \in H \setminus H^\times$. Then $\{a\} = \{x\} \cdot \{y\}$ in $\mathcal P_\fin(H)$, and we have by point \ref{it:prop:basic-properties-of-power-monoids(ii)} that neither $\{x\}$ nor $\{y\}$ is a unit of $\mathcal P_\fin(H)$. So, $\{a\}$ is not an atom of $\mathcal P_\fin(H)$, which proves the ``only if'' part of the claim.

Now, assume that $H$ is strongly unit-cancellative and $\{a\} = XY$ for some non-unit $X, Y \in \mathcal P_\fin(H)$. Accordingly, suppose for a contradiction that $Y \subseteq H^\times$ (the case when $X \subseteq H^\times$ is similar).
Then $|Y| \ge 2$, because every one-element subset of $H^\times$ is a unit of $\mathcal P_\fin(H)$ by point \ref{it:prop:basic-properties-of-power-monoids(ii)}.
In particular, there are $x \in H$ and $y_1, y_2 \in H^\times$ such that $y_1 \ne y_2$ and $a = x y_1 = x y_2$, viz., $xy_1 y_2^{-1} = x$. This, however, is impossible, since $H$ is strongly unit-cancellative.

So, putting it all together, neither $X$ nor $Y$ is a subset of $H^\times$, whence $a = xy$ for some $x \in X \setminus H^\times$ and $y \in Y \setminus H^\times$. To wit, $a$ is not an atom of $H$.

\ref{it:prop:basic-properties-of-power-monoids(v)} This is straightforward from points \ref{it:prop:basic-properties-of-power-monoids(ii)} and \ref{it:prop:basic-properties-of-power-monoids(iii)}, together with the fact that, if $H$ is cancellative, then $|XY| = 1$, for some $X, Y \subseteq H$, implies $|X| = |Y| = 1$.
\end{proof}
\begin{remark}
(i) Proposition \ref{prop:basic-properties-of-power-monoids}\ref{it:prop:basic-properties-of-power-monoids(i)} suggests that 
the basic goal we are pursuing in this section (that is, the study of power monoids from the perspective of factorization theory) is, except for trivial cases, entirely beyond the scope of the factorization theory of \textit{cancellative} monoids.

(ii) The power monoid of a linearly orderable monoid need not be atomic.
In fact, let $H$ be a commutative, linearly orderable, divisible monoid such that $H^\times \ne H$ (e.g., the additive monoid of the non-negative rational numbers). Then the set of atoms of $H$ is empty (cf. Remark \ref{rem:counterexample-to-divisor-closedness}), and since $H \setminus H^\times$ is non-empty, we see that $H$ is not atomic: This proves that, to some extent, Proposition \ref{prop:basic-properties-of-power-monoids}\ref{it:prop:basic-properties-of-power-monoids(v)} is sharp.
\end{remark}
With the above in mind, we look for conditions such that $\mathcal P_\fin(H)$ and $\mathcal P_{\fun}(H)$ are \BF-monoids,
and to this end we make the following:
\begin{definition}
We say that a monoid $H$ is \textit{linearly orderable} if there exists a total order $\preceq$ on $H$ such that $xz \prec yz$ and $zx \prec zy$ for all $x, y, z \in H$ with $x \prec y$, in which case we call the pair $(H, \preceq)$ a \textit{linearly ordered monoid}.
\end{definition}
Every submonoid of a linearly order\-able monoid is still a linearly orderable monoid, and the same is true of any direct product (either finite or infinite) of linearly orderable monoids.
An interesting variety of linearly orderable groups is provided by abelian torsion-free groups, as first proved by Levi in \cite{Levi13}. In a similar vein, Iwasawa \cite{Iwasa48}, Mal'tsev \cite{Malc48}, and Neumann \cite{Neum49} established, independently from each other, that torsion-free nilpotent groups are linearly orderable. Moreover, pure braid groups \cite{RZ98} and free groups \cite{Iwasa48} are linearly orderable, and so are some Baumslag-Solitar groups, which has led to interesting developments in connection to the study of sums of dilates in additive number theory, see \cite{FHLMS15, FHLMPRS16} and references therein.
Further examples are discussed in \cite[Appendix A]{Tr15} and \cite[\S{}1]{PlTr16}.
\begin{proposition}
\label{prop:lin-orderable-H}
Let $H$ be a linearly orderable monoid. Then:
\begin{enumerate}[label={\rm (\roman{*})}]
\item\label{it:prop:lin-orderable-H(0)} $|XY| \ge |X| + |Y| - 1$ for all $X, Y \in \PPc_{\rm fin}(H)$.
\item\label{it:prop:lin-orderable-H(i)} $\dual{H}$ and $\mathcal P_{\fun}(H)$ are strongly unit-cancellative monoids.
\item\label{it:prop:lin-orderable-H(ii)} $\lambda_\times: \mathcal P_{\fun}(H) \to \mathcal P_{\fun}(H): X \mapsto |X| - 1$ is a length function and $\mathcal P_{\fun}(H)$ is a \BF-monoid.
\item\label{it:prop:lin-orderable-H(iii)} $\mathcal P_\fin(H)$ is a \BF-monoid if and only if so is $H$.
\end{enumerate}
\end{proposition}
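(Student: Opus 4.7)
For part \ref{it:prop:lin-orderable-H(0)}, I would enumerate the elements of $X$ and $Y$ in increasing order under the linear order $\preceq$, writing $X = \{x_1 \prec \cdots \prec x_m\}$ and $Y = \{y_1 \prec \cdots \prec y_n\}$; the compatibility of $\preceq$ with the monoid operation then produces a strictly increasing chain
\begin{equation*}
x_1 y_1 \prec x_2 y_1 \prec \cdots \prec x_m y_1 \prec x_m y_2 \prec \cdots \prec x_m y_n
\end{equation*}
of $m + n - 1$ distinct elements of $XY$, giving the desired inequality.

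For part \ref{it:prop:lin-orderable-H(i)}, suppose $XY = X$ in $\PPc_\fin(H)$; then part \ref{it:prop:lin-orderable-H(0)} forces $|Y| = 1$, say $Y = \{y\}$. Taking $x^* := \max_\preceq X$ and using $x^* y \in X$ yields $x^* y \preceq x^*$, hence $y \preceq 1_H$ by the strict monotonicity of left multiplication, while the symmetric argument at $\min_\preceq X$ gives $y \succeq 1_H$, so $y = 1_H$. The case $YX = X$ is parallel, and $\PPc_\fun(H)$ inherits strong unit-cancellativity from $\PPc_\fin(H)$ as a submonoid sharing the identity $\{1_H\}$.

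For part \ref{it:prop:lin-orderable-H(ii)}, the units of $\PPc_\fun(H)$ are precisely the singletons $\{u\}$ with $u \in H^\times$ by Proposition \ref{prop:basic-properties-of-power-monoids}\ref{it:prop:basic-properties-of-power-monoids(ii)}, so every non-unit has cardinality $\ge 2$; thus if $Y = UXV$ with $U$ a non-unit, part \ref{it:prop:lin-orderable-H(0)} gives $|Y| \ge |U| + |X| + |V| - 2 \ge |X| + 1$ and hence $\lambda_\times(Y) > \lambda_\times(X)$, the case of $V$ non-unit being symmetric. So $\lambda_\times$ is a length function, and the BF-property of $\PPc_\fun(H)$ follows from Corollary \ref{cor:BFness-of-unit-cancellative-monoids}.

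For part \ref{it:prop:lin-orderable-H(iii)}, the ``only if'' direction is immediate from Proposition \ref{prop:basic-properties-of-power-monoids}\ref{it:prop:basic-properties-of-power-monoids(v)}, since linearly orderable monoids are cancellative. For the converse, I would take $\lambda_H(x) := \sup \LLs_H(x) \in \mathbf N$; this is a length function on $H$, by Remark \ref{rem:factorizations-of-1H} (superadditivity of $\sup \LLs_H$ over products) together with $\lambda_H(w) \ge 1$ for every non-unit $w$ in a BF-monoid, and moreover $\lambda_H(uxv) = \lambda_H(x)$ for $u, v \in H^\times$ by Lemma \ref{lem:basic-properties-atoms-units}\ref{it:lem:basic-properties-atoms-units(iv)}. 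I then lift $\lambda_H$ to $\PPc_\fin(H)$ via
\begin{equation*}
\lambda_\fin(X) := (|X| - 1) + \max_{x \in X} \lambda_H(x),
\end{equation*}
and verify that $\lambda_\fin$ is a length function on $\PPc_\fin(H)$ by a case split: if at least one of $U, V$ has cardinality $\ge 2$, part \ref{it:prop:lin-orderable-H(0)} yields $|Y| \ge |X| + 1$ while $\max_y \lambda_H(y) \ge \max_x \lambda_H(x)$ term-by-term; if both are singletons, necessarily with at least one containing a non-unit of $H$, then cancellativity of $H$ forces $|Y| = |X|$ but $\lambda_H(uxv) > \lambda_H(x)$ for every $x \in X$, so $\max_y \lambda_H(y) > \max_x \lambda_H(x)$. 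Corollary \ref{cor:BFness-of-unit-cancellative-monoids} then delivers BF. I expect the main obstacle to be the singleton case, where the strict growth of $\lambda_\fin$ must come entirely from the $\lambda_H$-component rather than from the cardinality, and it is here that the length-function property of $\lambda_H$ with a non-unit ``coefficient'' is essential.
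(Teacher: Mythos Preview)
Your proof is correct. Parts \ref{it:prop:lin-orderable-H(0)}--\ref{it:prop:lin-orderable-H(ii)} follow essentially the same route as the paper (your argument for \ref{it:prop:lin-orderable-H(i)} first uses \ref{it:prop:lin-orderable-H(0)} to force $|Y|=1$ before pinning down $y=1_H$ via min/max, whereas the paper goes directly through $\min(XY)=\min X\cdot\min Y$ and $\max(XY)=\max X\cdot\max Y$, but these are minor variations on the same idea).

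For part \ref{it:prop:lin-orderable-H(iii)}, however, you and the paper choose genuinely different length functions on $\mathcal P_\fin(H)$. The paper takes
\[
\lambda(X) \;:=\; |X| - 1 + \sup \mathsf L_H\bigl(\max\nolimits_\preceq X\bigr),
\]
exploiting the specific fact that in a linearly ordered monoid the $\preceq$-maximum of a product set is the product of the maxima, so that the $\sup\mathsf L_H$-term is automatically superadditive along $Y=UXV$ and no case split is needed. Your choice $\lambda_\fin(X) = |X|-1 + \max_{x\in X}\sup\mathsf L_H(x)$ does not track a distinguished element of $X$, which is why you are forced into the dichotomy on whether $U$ and $V$ are singletons: when they are not, the cardinality term does the work; when they are, cancellativity of $H$ preserves $|X|$ and the strict growth must come entirely from $\lambda_H$. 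Both arguments are sound; the paper's is slightly slicker in avoiding the case analysis, while yours makes more explicit exactly which hypothesis (the cardinality inequality versus the BF-property of $H$) is doing the work at each step.
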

\begin{proof}
To begin, let $\preceq$ be a total order turning $H$ into a linearly ordered monoid. Given $S \in \mathcal P_{\fin}(H)$, we will denote by $S_\sharp$ and $S^\sharp$, respectively, the minimum and the maximum of $S$ relative to $\preceq$ (which are well defined, since every non-empty finite subset of a totally ordered set has a maximum and a minimum, and these are in fact unique).

\ref{it:prop:lin-orderable-H(0)} Fix $X, Y \in \PPc_{\rm fin}(H)$. The claim is immediate if $X$ or $Y$ is a singleton, as linearly orderable monoids are cancellative. Otherwise, let $x_1, \ldots, x_m$ be the unique enumeration of $X$ with $x_1 \prec \cdots \prec x_m$ and, similarly, $y_1, \ldots, y_n$ the unique enumeration of $Y$ with $y_1 \prec \cdots \prec y_n$. Then it is clear that
$$
x_1 y_1 \prec \cdots \prec x_m y_1 \prec \cdots \prec x_m y_n,
$$
with the result that $|XY| \ge m + n - 1 = |X| + |Y| -1$.

\ref{it:prop:lin-orderable-H(i)} Let $X, Y \in \PPc_{\rm fin}(H)$ such that $XY = X$ (the case when $YX = X$ is similar). Then
$
X_\sharp = (XY)_\sharp = X_\sharp \cdot Y_\sharp
$ and $X^\sharp = X^\sharp \cdot Y^\sharp$,
which is possible if and only if $Y_\sharp = Y^\sharp = 1_H$ (recall that $H$ is cancellative). To wit, $XY = X$ only if $Y = \{1_H\}$. This implies that $\mathcal P_\fin(H)$ is strongly unit-cancellative, and then so is $\mathcal P_{\fun}(H)$, since submonoids of strongly unit-cancellative monoids are strongly unit-cancellative.

\ref{it:prop:lin-orderable-H(ii)} 
Let $X, Y \in \mathcal P_{\fun}(H)$ such that $Y = UXV$, where $U, V \in \mathcal P_{\fun}(H)$ and at least one of $U$ and $V$ is not a unit. Then it follows from point \ref{it:prop:lin-orderable-H(0)} that
$$
\lambda_\times(Y) = |Y| - 1 \ge |U| + |X| + |V| - 3 = \lambda_\times(U) + \lambda_\times(X) + \lambda_\times(V) \ge \lambda_\times(X),
$$
and the last inequality is strict unless $\lambda_\times(U) = \lambda_\times(V) = 0$, that is, $|U| = |V| = 1$. So, knowing from Proposition \ref{prop:basic-properties-of-power-monoids}\ref{it:prop:basic-properties-of-power-monoids(ii)} that $\mathcal P_{\fun}(H)^\times = \big\{\{u\}: u \in H^\times\bigr\} \fixed[-0.15]{\text{ }}$, we find $\lambda_\times(Y) > \lambda_\times(X)$.

In other terms, we have shown that $\lambda_\times$ is a length function on $\mathcal P_{\fun}(H)$. Therefore, we conclude from Corollary \ref{cor:BFness-of-unit-cancellative-monoids}
that $\mathcal P_{\fun}(H)$ is a \BF-monoid.

\ref{it:prop:lin-orderable-H(iii)} The ``only if'' part is a consequence of Proposition \ref{prop:basic-properties-of-power-monoids}\ref{it:prop:basic-properties-of-power-monoids(v)}, in combination with the cancellativity of $H$. As for the other direction, assume $H$ is a \BF-monoid and let $\lambda$ be the function
$$
\mathcal P_\fin(H) \to \mathbf N: X \mapsto |X| + \sup {\sf L}_H(X^\sharp) - 1.
$$
Note that $\lambda$ is well defined, because ${\sf L}_H(x)$ is a finite subset of $\mathbf N$ for every $x \in H$ (by the assumption that $H$ is a \BF-monoid).
We want to prove that $\lambda$ is a length function on $\mathcal P_\fin(H)$, which, as in the proof of point \ref{it:prop:lin-orderable-H(ii)}, will imply that $\mathcal P_\fin(H)$ is a \BF-monoid.

Indeed, let $X, Y \in \mathcal P_\fin(H)$, and suppose that $Y = UXV$ for some $U, V \in \mathcal P_\fin(H)$ with $U \notin \mathcal P_\fin(H)^\times$ or $V \notin \mathcal P_\fin(H)^\times$. In particular, we can assume (by symmetry) that $U \notin \mathcal P_\fin(H)^\times$. By Proposition \ref{prop:basic-properties-of-power-monoids}\ref{it:prop:basic-properties-of-power-monoids(ii)}, this means that either $|U| \ge 2$ or $U = \{x\}$ for some $x \notin H^\times$, and in both cases $|U| + \sup \LLs_H(U^\sharp) \ge 2$. Moreover, it is clear that $|V| + \sup {\sf L}_H(V^\sharp) \ge 1$. Therefore, we get from Remark \ref{rem:factorizations-of-1H} and point \ref{it:prop:lin-orderable-H(0)} that
$$
\lambda(Y) \ge |U| + |X| + |V| + \sup {\sf L}_H(U^\sharp) + \sup \LLs_H(X^\sharp)+ \sup \LLs_H(V^\sharp) - 3 > \lambda(X).
$$
It follows that $\lambda$ is a length function on $\mathcal P_\fin(H)$, and this finishes the proof.
\end{proof}
\begin{corollary}
\label{cor:P_fuN(N)}
$\mathcal P_\fuN(\mathbf{N})$ is a strongly unit-cancellative, reduced, commutative \BF-monoid.
\end{corollary}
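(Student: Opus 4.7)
The plan is to derive each of the four properties by specializing earlier results to the case $H = (\mathbf N, +)$. Observe first that $(\mathbf N, +)$ is a linearly orderable commutative monoid (with the usual order $\le$ making it a linearly ordered monoid in the sense of the definition preceding Proposition \ref{prop:lin-orderable-H}), and its group of units is $\mathbf N^\times = \{0\}$. Consequently $\mathcal P_\fuN(\mathbf N)$ consists of the non-empty finite subsets of $\mathbf N$ that contain $0$.

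Commutativity of $\mathcal P_\fuN(\mathbf N)$ is then inherited from the commutativity of $+$ on $\mathbf N$, since $X + Y = \{x + y : (x,y) \in X \times Y\} = Y + X$ for all $X, Y \in \mathcal P_\fin(\mathbf N)$. Strong unit-cancellativity follows at once from Proposition \ref{prop:lin-orderable-H}\ref{it:prop:lin-orderable-H(i)} applied with $H = (\mathbf N, +)$, and the \BF-property from Proposition \ref{prop:lin-orderable-H}\ref{it:prop:lin-orderable-H(ii)}. Finally, reducedness follows from Proposition \ref{prop:basic-properties-of-power-monoids}\ref{it:prop:basic-properties-of-power-monoids(ii)}, which gives $\mathcal P_\fuN(\mathbf N)^\times = \bigl\{\{u\} : u \in \mathbf N^\times\bigr\} = \bigl\{\{0\}\bigr\}$; since $\{0\}$ is the identity of $\mathcal P_\fuN(\mathbf N)$, the monoid is reduced.

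There is no real obstacle here: the corollary is essentially a repackaging, for the canonical example $(\mathbf N, +)$, of the conclusions already established in Propositions \ref{prop:basic-properties-of-power-monoids} and \ref{prop:lin-orderable-H}, the only additional ingredient being the observation that $\mathbf N^\times = \{0\}$, which forces the unit group of $\mathcal P_\fuN(\mathbf N)$ to be trivial.
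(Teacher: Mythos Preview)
Your proof is correct and follows essentially the same approach as the paper's own proof, which likewise derives the four properties by invoking Propositions \ref{prop:basic-properties-of-power-monoids}\ref{it:prop:basic-properties-of-power-monoids(ii)} and \ref{prop:lin-orderable-H}\ref{it:prop:lin-orderable-H(i)}--\ref{it:prop:lin-orderable-H(ii)} together with the observation that $(\mathbf N,+)$ is a linearly orderable, reduced, commutative monoid.
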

\begin{proof}
It is a straightforward consequence of Propositions \ref{prop:basic-properties-of-power-monoids}\ref{it:prop:basic-properties-of-power-monoids(ii)} and \ref{prop:lin-orderable-H}\ref{it:prop:lin-orderable-H(i)}-\ref{it:prop:lin-orderable-H(ii)}, when considering that $(\mathbf{N},+)$ is a linearly orderable, reduced, commutative monoid.
\end{proof}
\begin{remark}
\label{rem:non-atomic_power_monoids}
Let $H$ be a monoid. As a complement to Proposition \ref{prop:lin-orderable-H}\ref{it:prop:lin-orderable-H(ii)}, Antoniou and the second-named author have recently proved that $\mathcal P_{\fin,\times}(H)$ is atomic if and only if $1_H \ne x^2 \ne x$ for every $x \in H \setminus \{1_H\}$, see \cite[Theorems 3.9 and 4.9]{An-Tn-18}; and is BF if and only if $H$ is torsion-free, see \cite[Theorem 3.11(iii)]{An-Tn-18}. On the other hand, we do not know of any analogous characterization of when $\mathcal P_\fin(H)$ is atomic, beside the case covered by Proposition \ref{prop:lin-orderable-H}\ref{it:prop:lin-orderable-H(iii)}.
\end{remark}
To conclude this section, we show that it is possible to understand some properties of $\mathcal P_{\fun}(H)$, under suitable assumptions on the monoid $H$, from the study of $\mathcal P_\fuN(\mathbf N)$, with the advantage that the latter is, in a sense, easier to deal with than the former (cf. Remark \ref{rem:philosophy-of-equimorphisms}).
\begin{theorem}
\label{th:transfer}
Let $H$ be a Dedekind-finite, non-torsion monoid. Then there exists a \textup{(}monoid\textup{)} homo\-mor\-phism $\Phi: \mathcal P_\fuN(\mathbf N) \to \mathcal P_{\fun}(H)$ for which the following holds:
\begin{enumerate}[label={\rm (\textsc{c})}]
\item\label{it:th:transfer:condition(C)} Given $X \in \mathcal P_{\fuN}(\mathbf N)$ and $Y_1, \ldots, Y_n \in \mathcal P_{\fun}(H)$ with $\Phi(X) = Y_1 \cdots Y_n$, there are $X_1, \ldots, X_n \in \mathcal P_{\fuN}(\mathbf N)$ such that $X = X_1 + \cdots + X_n$ and $\Phi(X_i) \simeq_{\PPc_{\fun}(H)} Y_i$ for every $i \in \llb 1, n \rrb$.
\end{enumerate}
In particular, $\Phi$ is an injective equimorphism, and hence we have that ${\sf L}_{\PPc_\fuN(\mathbf N)}(X) = {\sf L}_{\PPc_{\fun}(H)}(\Phi(X))$ and ${\sf c}_{\PPc_{\fun}(H)}(\Phi(X)) \le {\sf c}_{\PPc_\fuN(\mathbf N)}(X)$ for every $X \in \mathcal P_{\fin,0}(\mathbf N)$.
\end{theorem}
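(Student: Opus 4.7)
The plan is to use the non-torsion hypothesis to pick some $x \in H$ with $\ord_H(x) = \infty$ and to define $\Phi \colon \mathcal P_\fuN(\mathbf N) \to \mathcal P_\fun(H)$ by $\Phi(X) := \{x^k : k \in X\}$. First I would observe that the powers $1_H = x^0, x^1, x^2, \ldots$ are pairwise distinct (otherwise $\{x^n : n \ge 1\}$ would be eventually periodic, hence finite, contradicting infinite order), which makes $\Phi$ injective; that $\Phi$ is a monoid homomorphism is immediate, and the requirement $0 \in X$ guarantees $1_H \in \Phi(X)$, so $\Phi$ indeed lands in $\mathcal P_\fun(H)$.

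The heart of the argument is verifying condition (C). I will fix a decomposition $\Phi(X) = Y_1 \cdots Y_n$ with $Y_i \in \mathcal P_\fun(H)$. Since $0 \in X$, one has $1_H \in \Phi(X) = Y_1 \cdots Y_n$, so there exist $u_i \in Y_i$ with $u_1 \cdots u_n = 1_H$, and Proposition \ref{prop:Dedekind-finite} (applied iteratively) forces each $u_i$ to be a unit of $H$. Setting $w_i := u_1 \cdots u_{i-1}$ and $w_i' := u_{i+1} \cdots u_n$ (with $w_1 = w_n' = 1_H$), I observe that for every $y_i \in Y_i$ the element $w_i y_i w_i'$ lies in $Y_1 \cdots Y_n = \Phi(X)$, and so is uniquely of the form $x^{k(y_i)}$ for some $k(y_i) \in X \subseteq \mathbf N$. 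Defining $X_i := \{k(y_i) : y_i \in Y_i\}$, the identity $w_i u_i w_i' = 1_H = x^0$ yields $0 \in X_i$, hence $X_i \in \mathcal P_\fuN(\mathbf N)$, and the identity $Y_i = \{w_i^{-1}\} \cdot \Phi(X_i) \cdot \{(w_i')^{-1}\}$ gives $\Phi(X_i) \simeq Y_i$ in $\mathcal P_\fun(H)$.

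The hard (and really the only nontrivial) step is checking $X = X_1 + \cdots + X_n$, where the potential obstacle is the non-commutativity of $H$. The point is that the condition $u_1 \cdots u_n = 1_H$ has been engineered precisely so that the ``interior'' unit factors telescope: $(w_i')^{-1} w_{i+1}^{-1} = (w_{i+1} w_i')^{-1} = (u_1 \cdots u_n)^{-1} = 1_H$ for every $i \in \llb 1, n-1 \rrb$. Consequently, for any $(y_1, \ldots, y_n) \in Y_1 \times \cdots \times Y_n$,
$$
y_1 y_2 \cdots y_n = w_1^{-1} x^{k(y_1)} \bigl[(w_1')^{-1} w_2^{-1}\bigr] x^{k(y_2)} \cdots x^{k(y_n)} (w_n')^{-1} = x^{k(y_1) + \cdots + k(y_n)},
$$
so that $\Phi(X) = Y_1 \cdots Y_n = \Phi(X_1 + \cdots + X_n)$, and injectivity of $\Phi$ yields $X = X_1 + \cdots + X_n$. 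This establishes (C).

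The ``In particular'' clause then follows routinely. Condition \ref{covariant-transfer(1)} of Definition \ref{def:equimorphisms} is immediate, since $|\Phi(X)| = |X|$ forces $\Phi(X) \in \mathcal P_\fun(H)^\times$ to happen only when $X = \{0\}$. Conditions \ref{covariant-transfer(2)} and \ref{covariant-transfer(3)} are straightforward consequences of (C): if $\Phi(A_i) \simeq B_i$ with $B_i$ atomic in $\mathcal P_\fun(H)$, then a non-trivial factorization of $A_i$ in $\mathcal P_\fuN(\mathbf N)$ would transport via $\Phi$ to a non-trivial factorization of $B_i$ (using \ref{covariant-transfer(1)} to keep non-unitarity), giving a contradiction. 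Hence $\Phi$ is an (injective) equimorphism, and Theorem \ref{th:cotransfer-hom}\ref{it:prop:cotransfer-hom(1)},\ref{it:prop:cotransfer-hom(4)} delivers the equality of sets of lengths and the inequality of catenary degrees.
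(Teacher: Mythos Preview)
Your proof is correct and follows essentially the same route as the paper: your sets $\Phi(X_i) = w_i Y_i w_i'$ coincide exactly with the paper's $Y_i' := u_0 \cdots u_{i-1} Y_i \, u_i^{-1} \cdots u_1^{-1}$ (since $u_1 \cdots u_n = 1_H$ forces $w_i' = (u_1 \cdots u_i)^{-1}$), and your telescoping computation is the same as the paper's check that $Y_1' \cdots Y_n' = \Phi(X)$. One small remark: the sentence you give after ``straightforward consequences of (C)'' really justifies (E3) (it shows the $X_i$ obtained from (C) are atoms when the $B_i$ are), whereas (E2) needs the reverse implication---a factorization of $\Phi(A)$ pulls back via (C) to a factorization of $A$---which is equally easy but not quite what you wrote.
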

\begin{proof}
Using that $H$ is non-torsion, fix $x_0 \in H$ with $\ord_H(x_0) = \infty$, and let $\phi$ be the unique (monoid) homomorphism from $(\mathbf N, +)$ to $H$ with $\phi(1) = x_0$.
Of course, $\phi$ is a monomorphism, because $\phi(x) = \phi(y)$ for some $x, y \in \mathbf N$ with $x < y$ would imply
$
\{x_0^k: k \in \mathbf N^+\} \subseteq \fixed[-0.25]{\text{ }}\bigl\{x_0^k: k \in \llb 0, y-1 \rrb\bigr\}$,
in contradiction to the fact that $\ord_H(x_0) = \infty$. Moreover, we can clearly lift $\phi$ to a monomorphism $\Phi: \mathcal P_\fuN(\mathbf N) \to \mathcal P_{\fun}(H)$ by taking $\Phi(X) := \{\phi(x): x \in X\}$ for every $X \in \mathcal P_\fuN(\mathbf N)$.

To see that $\Phi$ satisfies condition \ref{it:th:transfer:condition(C)}, let $X \in \mathcal P_{\fuN}(\mathbf N)$ and $Y_1, \ldots, Y_n \in \mathcal P_{\fun}(H)$ such that $\Phi(X) = Y_1 \cdots Y_n$. Since $\phi$ is a homo\-mor\-phism and $0 \in X$, there exist $u_1 \in Y_1, \ldots, u_n \in Y_n$ for which $u_1 \cdots u_n = \phi(0) = 1_H$, and we get from Proposition \ref{prop:Dedekind-finite} that $u_1, \ldots, u_n \in H^\times$ (recall that, by hypothesis, $H$ is Dedekind-finite). Set, for every $i \in \llb 1, n \rrb$,
$$
Y_i^\prime := u_0 \cdots u_{i-1} Y_i \fixed[0.1]{\text{ }} u_i^{-1} \cdots u_1^{-1},
\quad\text{with } u_0 := 1_H.
$$
It is straightforward that $\Phi(X) = Y_1^\prime \cdots Y_n^\prime$, and of course $Y_i^\prime \simeq_{\PPc_{\fun}(H)} Y_i$ for each $i \in \llb 1, n \rrb$. Further, $1_H \in \bigcap_{i=1}^n Y_i^\prime$, with the result that $Y_1^\prime, \ldots, Y_n^\prime \subseteq \Phi(X)$. Thus, since $\Phi$ is injective, there exist $X_1, \ldots, X_n \subseteq X$ with $0 \in X_i$ and $\Phi(X_i) = Y_i^\prime \simeq_{\PPc_{\fun}(H)} Y_i$ for all $i \in \llb 1, n \rrb$. So $\Phi(X) = \Phi(X_1+ \cdots + X_n)$, and hence $X = X_1 + \cdots + X_n$ (again by the injectivity of $\Phi$). To wit, $\Phi$ satisfies condition \ref{it:th:transfer:condition(C)}, as was desired.

We are left to show that $\Phi$ is an equimorphism, as all the rest will follow from points \ref{it:prop:cotransfer-hom(1)} and \ref{it:prop:cotransfer-hom(4)} of Theorem \ref{th:cotransfer-hom}. For, it is clear from the above that $\Phi$ satisfies conditions \ref{covariant-transfer(1)} and \ref{covariant-transfer(3)} of Definition \ref{def:equimorphisms}. Therefore, it will be enough to prove that $\Phi$ is atom-preserving. 

To this end, let $A \in \mathscr{A}(\mathcal P_\fuN(\mathbf N))$, and assume first that $\Phi(A) = X^\prime Y^\prime$ for some $X^\prime, Y^\prime \in \mathcal P_{\fun}(H)$. Then, we derive from condition \ref{it:th:transfer:condition(C)} that there are $X, Y \in \mathcal P_{\fuN}(\mathbf N)$ such that 
$$
\Phi(X) \simeq_{\PPc_{\fun}(H)} X^\prime,
\quad
\Phi(Y) \simeq_{\PPc_{\fun}(H)} Y^\prime,
\quad\text{and}\quad
A = X + Y, 
$$
which can only happen if one of $X$ and $Y$ is $\{0\}$, since $\mathcal P_{\fuN}(\mathbf N)$ is a reduced \BF-monoid (Corollary \ref{cor:P_fuN(N)}) and $A$ is an atom of $\mathcal P_{\fuN}(\mathbf N)$. Accordingly, $X^\prime \simeq_{\PPc_{\fun}(H)} \{1_H\}$ or $Y^\prime \simeq_{\PPc_{\fun}(H)} \{1_H\}$, and hence one of $X^\prime$ and $Y^\prime$ is a unit of $\mathcal P_\fun(H)$.

On the other hand, suppose for a contradiction that $\Phi(A) \in \mathcal P_\fun(H)^\times$. Then, we have
by Proposition \ref{prop:basic-properties-of-power-monoids}\ref{it:prop:basic-properties-of-power-monoids(ii)} that $|\Phi(A)| = 1$. So $A$ is a singleton (recall that $\Phi$ is injective), and hence $A = \{0\}$. This, however, is impossible, because $A$ is an atom of $\mathcal P_\fuN(\mathbf N)$.
\end{proof}
\section{Some additive combinatorics}
\label{sec:the_case_of_integers}
We derive from Proposition \ref{prop:basic-properties-of-power-monoids} and Theorem \ref{th:transfer} that, in many relevant cases, the arithmetic of power monoids is ``controlled by the combinatorial structure'' of the integers, as algebraically encoded by the restricted power monoid of $(\mathbf N, +)$, which we continue to denote by $\mathcal P_{\fuN}(\mathbf{N})$ and to write additively (as in \S{ }\ref{sec:power_monoid}).
As a consequence, we are led here to consider various properties of (finite) subsets of $\mathbf N$ that can or cannot be split into a sumset in a non-trivial way. We start by identifying some families of atoms of $\mathcal P_\fuN(\mathbf{N})$.
\begin{proposition}
\label{prop:valuations}
Let $X, Y_1, \ldots, Y_n \in \mathcal P_\fuN(\mathbf N)$ such that $X = Y_1 + \cdots + Y_n$. We have:
\begin{enumerate}[label={\rm (\roman{*})}]
\item\label{it:prop:valuations(i)} $Y_i \subseteq X$ for every $i \in \llb 1, n \rrb$.
\item\label{it:prop:valuations(ii)} ${\sf L}(q \cdot \fixed[-0.3]{\text{ }} X) = {\sf L}(X)$ for every $q \in \mathbf N^+$.
\item\label{it:prop:valuations(iii)} If $X^+ \ne \emptyset$, then $\min X^+ \in Y_i$ for some $i \in \llb 1, n \rrb$.
\item\label{it:prop:valuations(iv)} Every $2$-element set in $\mathcal P_\fuN(\mathbf N)$ is an atom.
\item\label{it:prop:valuations(v)} A $3$-element set $A \in \mathcal P_\fuN(\mathbf N)$ is not an atom if and only if $A = \{0, x, 2x\}$ for some $x \in \mathbf N^+$.
\end{enumerate}
\end{proposition}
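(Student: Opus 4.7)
The plan is to prove the five parts sequentially, with each later item leveraging the earlier ones. Part (i) is the workhorse and follows directly from the defining property that every factor $Y_j$ contains $0$: given $y \in Y_i$, picking $0$ from each of the other $Y_j$'s exhibits $y = 0 + \cdots + 0 + y + 0 + \cdots + 0$ as an element of $X$. For part (ii), I would work with the map $\mu_q : \mathcal P_\fuN(\mathbf N) \to \mathcal P_\fuN(\mathbf N)$, $A \mapsto q \cdot A$; this is an injective monoid homomorphism, and by (i) every factor of $q \cdot X$ lies in $q \cdot X \subseteq q \mathbf N$, hence is of the form $q \cdot Y_i$ for a unique $Y_i \in \mathcal P_\fuN(\mathbf N)$. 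Consequently, any factorization of $q \cdot X$ descends to one of $X$ of the same length, while the reverse implication is trivial; atom-preservation and atom-reflection of $\mu_q$ then yield $\LLs(q \cdot X) = \LLs(X)$.

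Part (iii) uses non-negativity in an essential way: writing $m := \min X^+$ as $y_1 + \cdots + y_n$ with $y_j \in Y_j$, each summand is a non-negative integer and their sum is the smallest positive element of $X$, so exactly one $y_j$ equals $m$ (the rest being $0$), placing $m$ in the corresponding $Y_j$. Part (iv) then follows from (i) together with Proposition \ref{prop:basic-properties-of-power-monoids}\ref{it:prop:basic-properties-of-power-monoids(ii)}, which identifies $\{0\}$ as the only unit of $\mathcal P_\fuN(\mathbf N)$: a non-trivial factorization $\{0,a\} = Y_1 + \cdots + Y_n$ would force each $Y_i = \{0, a\}$ (since $Y_i \subseteq \{0, a\}$ by (i) and $|Y_i| \geq 2$), whence $Y_1 + Y_2 = \{0, a, 2a\}$, a three-element set that cannot embed into $\{0, a\}$ via the filler $0$'s from the remaining summands.

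The main work is in part (v), which I expect to be the chief obstacle. The ``if'' direction is immediate from the identity $\{0, x, 2x\} = \{0, x\} + \{0, x\}$, together with the fact that $\{0, x\}$ is a non-unit. For the converse, write $A = \{0, a, b\}$ with $0 < a < b$ and suppose $A = Y_1 + \cdots + Y_n$ with each $Y_i$ a non-unit. Proposition \ref{prop:lin-orderable-H}\ref{it:prop:lin-orderable-H(0)}, applied to the linearly ordered monoid $(\mathbf N, +)$ and iterated across the factorization, gives $3 = |A| \geq |Y_1| + \cdots + |Y_n| - (n-1) \geq n + 1$, forcing $n = 2$. By (i), each of $Y_1, Y_2$ is one of $\{0, a\}$, $\{0, b\}$, $\{0, a, b\}$; a quick enumeration of the resulting sumsets (exploiting $0 < a < b$ to prevent unwanted collisions such as $2a = b$ spuriously reducing the cardinality) shows that only $Y_1 = Y_2 = \{0, a\}$ yields a three-element set equal to $A$, whence $b = 2a$. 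The only real obstacle is organizing the enumeration cleanly; once the cardinality bound collapses $n$ to $2$, the remaining case check is entirely elementary.
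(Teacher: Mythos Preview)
Your proposal is correct and, where the paper gives details, follows essentially the same line; the paper in fact leaves \ref{it:prop:valuations(i)}--\ref{it:prop:valuations(iii)} as an exercise, so your explicit arguments for those are fine. One small expository gap: in \ref{it:prop:valuations(iii)}, the inference ``their sum is the smallest positive element of $X$, so exactly one $y_j$ equals $m$'' is not valid from non-negativity alone; you need to invoke \ref{it:prop:valuations(i)} to say that each $y_j \in Y_j \subseteq X$, hence $y_j \in \{0\} \cup \mathbf N_{\ge m}$, after which the conclusion follows. You clearly have this in mind, but it should be stated.

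For \ref{it:prop:valuations(iv)} and \ref{it:prop:valuations(v)} your route differs slightly from the paper's. The paper uses the cardinality bound from Proposition~\ref{prop:lin-orderable-H}\ref{it:prop:lin-orderable-H(0)} more aggressively: for \ref{it:prop:valuations(iv)} it writes $\{0,x\} = X + Y$ and gets $2 \ge |X| + |Y| - 1$, forcing one factor to be a singleton; for \ref{it:prop:valuations(v)} it starts directly from a two-term decomposition $A = X + Y$ (which is what ``not an atom'' gives you, so your reduction to $n = 2$ is redundant), obtains $|X| = |Y| = 2$ from $3 \ge |X| + |Y| - 1$, writes $X = \{0,x\}$, $Y = \{0,y\}$, and concludes in one line since $\{0,x,y,x+y\}$ has three elements only if $x = y$. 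Your approach via \ref{it:prop:valuations(i)} and enumeration of the subsets of $A$ is perfectly valid but does more casework than necessary; the paper's use of the cardinality bound short-circuits the enumeration by ruling out three-element factors from the start.
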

\begin{proof}
	\ref{it:prop:valuations(i)}-\ref{it:prop:valuations(iii)} are straightforward (we leave it as an exercise for the reader to fill in the details).

\ref{it:prop:valuations(iv)} Let $x \in \mathbf N^+$ and assume that $\{0, x\} = X + Y$ for some $X, Y \in \mathcal P_\fuN(\mathbf N)$. Since $(\mathbf N, +)$ is a linearly orderable monoid, we get from Proposition \ref{prop:lin-orderable-H}\ref{it:prop:lin-orderable-H(0)} that $2 \ge |X| + |Y| - 1$, which is possible only if $X$ or $Y$ is a singleton. Together with Proposition \ref{prop:basic-properties-of-power-monoids}\ref{it:prop:basic-properties-of-power-monoids(ii)}, this proves that $\{0, x\}$ is an atom of $\mathcal P_\fuN(\mathbf N)$.

\ref{it:prop:valuations(v)} The ``if'' clause is trivial. As for the other direction, let $X, Y \in \mathcal P_{\fuN}(\mathbf N)$ such that $A := X + Y$ is a $3$-element set, but neither $X$ nor $Y$ is a unit. Then it is easily seen from Propositions \ref{prop:basic-properties-of-power-monoids}\ref{it:prop:basic-properties-of-power-monoids(ii)} and \ref{prop:lin-orderable-H}\ref{it:prop:lin-orderable-H(0)} that $|X| = |Y| = 2$, i.e., $X = \{0, x\}$ and $Y = \{0, y\}$ for some $x, y \in \mathbf N^+$. It follows $A=\{0, x, y, x+y\}$, which is only possible if $x = y$ (because $|A| = 3$ and $1 \le x, y < x+y$), so that $A = \{0, x, 2x\}$.
\end{proof}
\begin{proposition}
\label{prop:large_atoms}
Let $d, \ell, q \in \mathbf N^+$ and $A \in \mathcal P_\fin(\mathbf N)$ such that $\min(d, \min A) > \ell q$ and $x \equiv y \bmod d$ for all $x, y \in A$. Then $\bigl(q \cdot \llb 0, \ell \rrb\bigr) \cup A \notin \mathscr A(\mathcal P_\fuN(\mathbf{N}))$ if and only if $A = \{(\ell+k)q\}$ for some $k \in \fixed[-0.3]{\text{ }}\bigl\llb 1, \lceil \ell/2 \rceil \bigr\rrb$.
\end{proposition}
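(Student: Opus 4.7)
My plan is to treat the two implications separately. Write $B := (q \cdot \llb 0, \ell \rrb) \cup A$, noting $B \in \mathcal{P}_\fuN(\mathbf{N})$ since $0 \in B$. For the backward direction, suppose $A = \{(\ell+k)q\}$ with $k \in \llb 1, \lceil \ell/2 \rceil \rrb$. I would exhibit an explicit nontrivial decomposition by setting $X := q \cdot (\llb 0, \ell - k \rrb \cup \{\ell\})$ and $Y := \{0, kq\}$. A direct computation gives $X + Y = (q \cdot \llb 0, \ell \rrb) \cup \{(\ell+k)q\} = B$, the essential observation being that $\llb 0, \ell-k \rrb \cup \llb k, \ell \rrb = \llb 0, \ell \rrb$ precisely when $\ell \geq 2k-1$, which is equivalent to $k \leq \lceil \ell/2 \rceil$. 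Since $|X|, |Y| \geq 2$, neither is a unit by Proposition \ref{prop:basic-properties-of-power-monoids}\ref{it:prop:basic-properties-of-power-monoids(ii)}, so $B$ is not an atom.

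For the forward direction, assume $B = X + Y$ with $X, Y \in \mathcal{P}_\fuN(\mathbf{N})$ and $|X|, |Y| \geq 2$, so that $0 \in X \cap Y$ and $X, Y \subseteq B$ by Proposition \ref{prop:valuations}\ref{it:prop:valuations(i)}. I split each summand as $X = X_0 \sqcup X_A$ with $X_0 := X \cap (q \cdot \llb 0, \ell \rrb)$ and $X_A := X \cap A$, and likewise for $Y$. The hypothesis $d > \ell q$ together with the common residue of $A$ modulo $d$ rules out ``mixing'': if $x \in X_0 \setminus \{0\}$ and $y \in Y_A$, then $x + y > \ell q$ forces $x + y \in A$, so $x \equiv 0 \pmod d$, contradicting $0 < x \leq \ell q < d$. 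Hence $X_0 = \{0\}$ or $Y_A = \emptyset$, and symmetrically $Y_0 = \{0\}$ or $X_A = \emptyset$. Since $X, Y \neq \{0\}$, the only surviving combinations are $X_0 = Y_0 = \{0\}$ or $X_A = Y_A = \emptyset$; the former is impossible because then $q \in B$ would lie in $X_A + Y_A \subseteq A + A \subseteq (2\ell q, \infty)$. So $X, Y \subseteq q \cdot \llb 0, \ell \rrb$, and then $A \subseteq X + Y \subseteq q \cdot \llb 0, 2\ell \rrb$ forces $A \subseteq q \cdot \llb \ell + 1, 2\ell \rrb$; finally, any two distinct elements of $A$ would differ by a nonzero multiple of $q$ of absolute value at most $\ell q < d$, contradicting the mod $d$ condition. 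Hence $A = \{cq\}$ for some $c \in \llb \ell + 1, 2\ell \rrb$.

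It remains to bound $c$, and this is the combinatorial heart of the argument. Rescaling by $q$, I need to show that if $X', Y' \subseteq \llb 0, \ell \rrb$ contain $0$, have size $\geq 2$, and satisfy $X' + Y' = \llb 0, \ell \rrb \cup \{c\}$, then $c \leq \ell + \lceil \ell/2 \rceil$. The case $c = \ell + 1$ is immediate. For $c \geq \ell + 2$, set $m := \max X'$ and $n := \max Y'$, so $m + n = c$, and assume without loss of generality $m \geq n$. The ``gap'' $(\ell, c)$ in the sumset forces $X' \subseteq \llb 0, \ell - n \rrb \cup \{m\}$ (any $x \in X' \cap (\ell - n, m)$ would yield $x + n \in (\ell, c)$, forbidden) and symmetrically $Y' \subseteq \llb 0, \ell - m \rrb \cup \{n\}$. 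Every element of $\llb 0, n - 1 \rrb$ must then be represented in $(X' \cap \llb 0, \ell - n \rrb) + (Y' \cap \llb 0, \ell - m \rrb)$, since any sum involving $m$ or $n$ is $\geq n$; but that sumset is contained in $\llb 0, 2\ell - c \rrb$, giving $n - 1 \leq 2\ell - c$. Combined with $n \geq c - \ell$ (from $m \leq \ell$), this yields $2c \leq 3\ell + 1$, equivalent to $c \leq \ell + \lceil \ell/2 \rceil$. I expect the main obstacle to be the first forward-direction step: exploiting the modular structure of $A$ to eliminate all ``mixed'' decompositions and reduce the general problem to this final sumset inequality.
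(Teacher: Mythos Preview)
Your proof is correct. The backward direction uses exactly the same decomposition as the paper. In the forward direction both arguments arrive at the intermediate conclusion that $X, Y \subseteq q \cdot \llb 0, \ell \rrb$ and hence $|A| = 1$, but by slightly different paths: the paper bounds $\max Y$ directly by an ad hoc argument involving $\min X^+$ and the element $q$, whereas you organize the reduction via the modular splitting $X = X_0 \sqcup X_A$, using the common residue of $A$ modulo $d$ to kill all ``mixed'' sums at once. For the final bound on $k$ both proofs rest on the same gap observation (each summand must avoid an interval just below its maximum), but you extract the inequality $2c \le 3\ell + 1$ directly, while the paper assumes $k > \lceil \ell/2 \rceil$ and shows a multiple of $q$ is missing from $\llb 0, \ell q\rrb$. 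Your organization is a bit cleaner; the paper's is more hands-on. One small cosmetic point: when you dismiss the case $X_0 = Y_0 = \{0\}$ by saying $q$ ``would lie in $X_A + Y_A$'', you are implicitly using $q \notin X_A \cup Y_A$, which is immediate from $\min A > \ell q \ge q$ but worth stating.
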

\begin{proof}
Set $B := \bigl(q \cdot \llb 0, \ell \rrb\bigr) \cup A$, and suppose first that $A = \{(\ell+k)q\}$ for some $k \in \fixed[-0.3]{\text{ }}\bigl\llb 1, \lceil \ell/2 \rceil \bigr\rrb$.
Then $k \le \ell - k + 1$ and $B = \{0, qk\} + C$ with $C := \bigl(q \cdot \llb 0, \ell-k \rrb\bigr) \cup \{\ell q\}$. Therefore, $B$ is not an atom of $\mathcal P_\fuN(\mathbf{N})$, because it is the sum of two elements of $\mathcal{P}_\fuN(\mathbf N)$ both different from $\{0\}$ (recall that $\mathcal P_\fuN(\mathbf{N})$ is a reduced monoid). So the ``if'' part of the statement is proved.

As for the other direction, let $B = X + Y$ for some non-zero $X, Y \in \PPc_\fuN(\mathbf N)$ (so, both $X^+$ and $Y^+$ are non-empty), and set $x_{\fixed[0.1]{\text{ }}\rm M} := \max X$ and $y_{\fixed[0.1]{\text{ }}\rm M} := \max Y$.
By symmetry, we can assume $1 \le x_{\fixed[0.1]{\text{ }}\rm M} \le y_{\fixed[0.1]{\text{ }}\rm M}$.

We claim $y_{\fixed[0.1]{\text{ }}\rm M} \le \ell q$. Indeed, suppose the contrary, and define $x_{\fixed[0.1]{\text{ }} \rm m} := \min X^+$.
Then $y_{\fixed[0.1]{\text{ }}\rm M}, x_{\fixed[0.1]{\text{ }} \rm m} + y_{\fixed[0.1]{\text{ }}\rm M} \in A$, which is only possible if $x_{\fixed[0.1]{\text{ }} \rm m} \ge d \ge \ell q+1$, since $x_{\fixed[0.1]{\text{ }} \rm m} = (x_{\fixed[0.1]{\text{ }} \rm m} + y_{\fixed[0.1]{\text{ }}\rm M}) - y_{\fixed[0.1]{\text{ }}\rm M} \ge 1$ and $x \equiv y \bmod d$ for all $x, y \in A$ (by hypothesis). 
In addition, points \ref{it:prop:valuations(i)} and \ref{it:prop:valuations(iii)} of Proposition \ref{prop:valuations} imply $q \in X \cup Y$ (note that $q = \min B^+$). So $x_m + q$ or $q + y_{\fixed[0.1]{\text{ }}\rm M}$ is in $X+Y = B$, and hence in $A$ (since $x_m + q$ and $q + y_{\fixed[0.1]{\text{ }}\rm M}$ are both $\ge \ell q + 1$). This is however a contradiction, in that 
$
q = (x_{\fixed[0.1]{\text{ }} \rm m} + q) - x_{\fixed[0.1]{\text{ }} \rm m} = (q + y_{\fixed[0.1]{\text{ }}\rm M}) - y_{\fixed[0.1]{\text{ }}\rm M}$ and $1 \le q < d$, but any two distinct elements in $A$ should have a distance $\ge d$.

It follows $B \subseteq \llb 0, x_{\fixed[0.1]{\text{ }}\rm M} + y_{\fixed[0.1]{\text{ }}\rm M} \rrb \subseteq \llb 0, 2\ell q \rrb$, hence $A \subseteq \llb \ell q+1, 2\ell q \rrb$. Since $1 \le 2\ell q - (\ell q + 1) < d$ and $A$ is non-empty, we can therefore conclude $|A| = 1$ (using again that $a \equiv b \bmod d$ for all $a, b \in A$). On the other hand, $y_{\fixed[0.1]{\text{ }}\rm M} \le \ell q$ yields, along with Proposition \ref{prop:valuations}\ref{it:prop:valuations(i)}, that $X, Y \subseteq q \cdot \llb 0, \ell \rrb$, hence $A \subseteq X + Y \subseteq q \cdot \mathbf N$.

So, putting it all together, we see that $A = \{(\ell+k)q\}$ for some $k \in \llb 1, \ell \rrb$.
Suppose for a contradiction that $\lceil \ell/2 \rceil < k \le \ell$. Then $k \ge 2$, and of course $x_{\fixed[0.1]{\text{ }}\rm M} \ge kq$, otherwise we would obtain
$$
(\ell+k)q = \max B = x_{\fixed[0.1]{\text{ }}\rm M} + y_{\fixed[0.1]{\text{ }}\rm M} < kq + \ell q,
$$
which is impossible. Moreover, we claim that
\begin{equation}
\label{equ:emptyness}
X \cap \llb x_{\fixed[0.1]{\text{ }}\rm M} - kq + 1, x_{\fixed[0.1]{\text{ }}\rm M} - 1 \rrb = Y \cap \llb y_{\fixed[0.1]{\text{ }}\rm M} - kq + 1, y_{\fixed[0.1]{\text{ }}\rm M} - 1 \rrb = \emptyset.
\end{equation}
In fact, if $x \in X \cap \llb x_{\fixed[0.1]{\text{ }}\rm M} - kq + 1, x_{\fixed[0.1]{\text{ }}\rm M} - 1 \rrb \ne \emptyset$ (the other case is similar), then $x + y_{\fixed[0.1]{\text{ }}\rm M}, (\ell+k)q \in B$ (as was already noted, we have $(\ell+k)q = x_{\fixed[0.1]{\text{ }}\rm M} + y_{\fixed[0.1]{\text{ }}\rm M}$), and actually
$$
(\ell+k)q > x + y_{\fixed[0.1]{\text{ }}\rm M} \ge (\ell+k)q - kq + 1 = \ell q+1.
$$
We thus get $x + y_{\fixed[0.1]{\text{ }}\rm M}, (\ell+k)q \in A$, which is impossible (since $A$ is a singleton) and leads to \eqref{equ:emptyness}.

Accordingly, we find that $X \subseteq \llb 0, x_{\fixed[0.1]{\text{ }}\rm M} - kq \rrb \cup \{x_{\fixed[0.1]{\text{ }}\rm M}\}$ and $Y \subseteq \llb 0, y_{\fixed[0.1]{\text{ }}\rm M} - kq \rrb \cup \{y_{\fixed[0.1]{\text{ }}\rm M}\}$, whence
\begin{equation}
\label{equ:holes}
q \cdot \llb 0, \ell \rrb = B \setminus \{(\ell+k)q\} = (X + Y) \setminus \{(\ell+k)q\} \subseteq \bigl(\llb 0, (\ell-k)q \rrb \cup \llb x_{\fixed[0.1]{\text{ }}\rm M}, \ell q \rrb\bigr) \cap (q \cdot \mathbf N).
\end{equation}
However, this is still a contradiction, because $x_{\fixed[0.1]{\text{ }}\rm M} - (\ell-k)q \ge kq - (\ell-k)q = (2k - \ell)q \ge 2q$, with the result that at least one multiple of $q$ in the interval $\llb 0, \ell q \rrb$ is missing from the right-most side of \eqref{equ:holes}.
\end{proof}
\begin{proposition}
\label{prop:atomic-unions}
Let $A \in \mathcal P_\fuN(\mathbf N)$ and $b, c \in \mathbf N^+$, and assume $b \ne 2c$ and $2 \sup A < c < b - \sup A$. Then $A \cup (A + b) \cup \{c\}$ is an atom of $\mathcal P_\fuN(\mathbf N)$.
\end{proposition}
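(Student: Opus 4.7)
Set $\alpha := \max A$ and $B := A \cup \{c\} \cup (A+b)$. The hypotheses $2\alpha < c < b - \alpha$ give in particular $\alpha < c$ and $c + \alpha < b$, so the three pieces $A \subseteq \llb 0, \alpha \rrb$, $\{c\}$, and $A + b \subseteq \llb b, b+\alpha \rrb$ of $B$ are pairwise disjoint and separated by gaps; in particular, $\max B = b + \alpha$ and $B \cap (\alpha, b) = \{c\}$, the ``middle gap'' that will do most of the work.

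The plan is to suppose for contradiction that $B = X + Y$ with $X, Y \in \mathcal P_\fuN(\mathbf N) \setminus \{\{0\}\}$. By Proposition \ref{prop:valuations}\ref{it:prop:valuations(i)}, $X, Y \subseteq B$, and $\max X + \max Y = \max B = b + \alpha$. A case analysis on which of the three clusters of $B$ contains $\max X$ and $\max Y$ (nine combinations, six after swapping $X$ and $Y$), based on elementary size estimates from $2\alpha < c < b - \alpha$, rules out every combination except the following two, up to a swap of $X$ and $Y$: \emph{Case (I)} $\max X \le \alpha$ and $\max Y \ge b$, and \emph{Case (II)} $\max X = \max Y = c$, which forces $b + \alpha = 2c$.

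In Case (II), $X, Y \subseteq B \cap \llb 0, c \rrb = A \cup \{c\}$; writing $b \in X + Y$ as $b = x + y$ and checking the four sub-cases according to whether each of $x, y$ belongs to $A$ or equals $c$ gives, respectively, $b \le 2\alpha$, $b \le c + \alpha$, $b \le c + \alpha$, and $b = 2c$, each of which contradicts a hypothesis (the last sub-case is precisely where $b \ne 2c$ is used). In Case (I), $X \subseteq B \cap \llb 0, \alpha \rrb = A$; writing $c = x + y$ with $x \in X$, $y \in Y$, the estimate $y = c - x \ge c - \alpha > \alpha$ together with $y \in B$ and $y \le c < b$ forces $y = c$ and $x = 0$, so $c \in Y$. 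But then, for every $x \in X$, $c + x \in X + Y = B$ with $\alpha < c + x \le c + \alpha < b$, which places $c + x$ in the middle gap and so forces $c + x = c$, i.e.\ $x = 0$; hence $X = \{0\}$, a contradiction. The single delicate point is ruling out Case (II): the relation $b + \alpha = 2c$ sits right next to the hypothesis $b \ne 2c$ and cannot be disposed of without the four-sub-case inspection of $b = x + y$.
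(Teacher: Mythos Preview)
Your proof is correct. Both your argument and the paper's hinge on the gap structure of $B$ (in particular $B\cap(\alpha,b)=\{c\}$) and on the observation that $c+x$ falls into this gap whenever $x\in A$, but the organization is different. The paper first shows $c\in X\cup Y$, say $c\in X$, and uses the $c+a$ argument to force $Y=\{0,c\}$; it then studies $\bar z:=\min X^{+}$, shows $\bar z+c\in A+b$, and from this deduces $A^{+}=\emptyset$, reducing to $B=\{0,c,b\}$ and invoking Proposition~\ref{prop:valuations}\ref{it:prop:valuations(v)} (this is where $b\ne 2c$ enters). You instead split upfront on the location of $\max X$ and $\max Y$: your Case~(I) is dispatched directly by the $c+x$ argument, avoiding both the $\min X^{+}$ detour and the reduction to $A=\{0\}$; your Case~(II), which is exactly the paper's endgame configuration $b+\alpha=2c$, is handled by a short inspection of $b=x+y$ with the hypothesis $b\ne 2c$ used directly rather than through the $3$-element atom criterion. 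Your version is self-contained and slightly more streamlined; the paper's is more linear but leans on the auxiliary Proposition~\ref{prop:valuations}\ref{it:prop:valuations(v)}.
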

\begin{proof}
Suppose to the contrary that $B := A \cup (A+b) \cup \{c\}$ is not an atom of $\mathcal P_{\fuN}(\mathbf N)$, i.e., there are $X, Y \in \mathcal P_{\fuN}(\mathbf N)$ such that $B = X + Y$ and $|X|, |Y| \ge 2$. We claim $c \in X \cup Y$.

Otherwise, $c = \bar{x} + \bar{y}$ for some $\bar{x} \in X^+$ and $\bar{y} \in Y^+$, and this can only happen if $\bar{x}, \bar{y} \in A$, because $X, Y \subseteq B$ and $\inf(A+b) = b > c$. It follows that $c = \bar{x} + \bar{y} \le 2 \sup A$, which is a contradiction, since $2 \sup A < c$ (by hypothesis). Thus $c \in X \cup Y$ (as claimed), and by symmetry we can assume $c \in X$.

Then $Y^+ \subseteq (A+b) \cup \{c\}$, because the assumptions made on $A$, $b$, and $c$ imply that $\sup A < c + a \le c + \sup A < b$ for all $a \in A$. In turn, this yields $Y = \{0, c\}$, since $Y \cap (A + b) \ne \emptyset$ would imply 
$$
\sup B = \sup X+ \sup Y \ge c + b > \sup(A+b) = \sup B, 
$$
which is, of course, impossible. So $B = X \cup (X + c)$, and therefore $X^+ \ne \emptyset$, because $\{0, b, c\} \subseteq B$ and $0 < c < b$ (whereas $X^+ = \emptyset$ would give $|B| = 2$). Accordingly, set $\bar z := \inf X^+$.  

Then $\bar z + c \in B$, and actually $\bar z + c \in A + b$, as we see by consider\-ing that $\bar z + c > c > \sup A$. But this is only possible if $A^+ = \emptyset$; otherwise, we would get from the above and Proposition \ref{prop:valuations}\ref{it:prop:valuations(iii)} that $\bar z = \inf A^+$, and hence $\bar z + c \le \sup A + c < b$, in contrast to the fact that $\bar z + c \in A + b$.

So putting it all together, we obtain that $B = \{0, b, c\}$, which, however, is still a contradiction, because $\{0, b, c\}$ is an atom, by Proposition \ref{prop:valuations}\ref{it:prop:valuations(v)} and the assumption that $2c \ne b > c$.
\end{proof}
We will also need a series of lemmas, the last of which (Lemma \ref{lem:fund-lemma}) is of crucial importance for the goals that we are pursuing (as summarized in \S{ }\ref{subsec:plan}).
\begin{lemma}
\label{lem:sidonicity}
Let $\alpha_1, \beta_1, \ldots, \alpha_n, \beta_n \in \mathbf N$ and $u_1, \ldots, u_n \in \mathbf N^+$ such that $\sum_{j=1}^i u_i \cdot \max(\alpha_i, \beta_i) < u_{i+1}$ for every $i \in \llb 1, n - 1 \rrb$.
Then $\sum_{i=1}^{n} \alpha_i u_i = \sum_{i=1}^{n} \beta_i u_i$ if and only if $\alpha_i = \beta_i$ for all $i \in \llb 1, n \rrb$.
\end{lemma}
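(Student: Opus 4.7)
The ``if'' direction is immediate, so the content is in the ``only if'' part, and I would prove it by contradiction. Suppose $\sum_{i=1}^n \alpha_i u_i = \sum_{i=1}^n \beta_i u_i$ but the tuples $(\alpha_i)$ and $(\beta_i)$ disagree, and let $k$ be the largest index in $\llb 1, n \rrb$ with $\alpha_k \ne \beta_k$. By symmetry there is no loss in assuming $\alpha_k > \beta_k$, so in particular $\alpha_k \ge 1$ and $\max(\alpha_k,\beta_k) = \alpha_k$.

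By the choice of $k$, all terms with index $> k$ cancel in the identity $\sum_i \alpha_i u_i = \sum_i \beta_i u_i$, so rearranging gives
$$
(\alpha_k - \beta_k)\, u_k \;=\; \sum_{j=1}^{k-1} (\beta_j - \alpha_j)\, u_j.
$$
The left-hand side is at least $u_k$ (since $\alpha_k - \beta_k \ge 1$), while the right-hand side satisfies
$$
\sum_{j=1}^{k-1} (\beta_j - \alpha_j)\, u_j \;\le\; \sum_{j=1}^{k-1} \max(\alpha_j, \beta_j)\, u_j,
$$
using that $|\beta_j - \alpha_j| \le \max(\alpha_j,\beta_j)$. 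The hypothesis applied with $i = k-1$ (which is legitimate because $k \le n$, so either $k \ge 2$ and this step is available, or $k = 1$ and then the right-hand sum is empty, already contradicting $(\alpha_1 - \beta_1) u_1 \ge u_1 > 0$) then yields $\sum_{j=1}^{k-1} \max(\alpha_j, \beta_j)\, u_j < u_k$, which contradicts the previous two displayed inequalities.

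The only subtle point, and the one I expect to matter most, is handling the boundary case $k = 1$ (where the growth hypothesis is vacuous), but here the equation forces $(\alpha_1 - \beta_1) u_1 = 0$, which together with $u_1 \ge 1$ and $\alpha_1 \ne \beta_1$ is an immediate contradiction. Everything else is a one-line estimate exploiting the super-geometric growth of the $u_j$'s relative to the coefficients $\max(\alpha_j, \beta_j)$, in the same spirit of a positional numeral system where each ``digit'' is bounded by a quantity smaller than the next ``base''.
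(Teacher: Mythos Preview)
Your proof is correct and follows essentially the same approach as the paper's: both pick the largest index where the sequences disagree, cancel the higher-indexed terms, and derive a contradiction from the growth hypothesis (the paper assumes $\alpha_{i_0} < \beta_{i_0}$ and writes a single chain of inequalities, while you assume $\alpha_k > \beta_k$ and isolate $(\alpha_k - \beta_k)u_k$ on one side, but the estimates are the same). Your explicit handling of the boundary case $k=1$ is a nice touch; in the paper's argument this case is absorbed silently into the displayed inequality.
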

\begin{proof}
The ``if'' part is obvious. As for the other, assume $\sum_{i=1}^n \alpha_i u_i = \sum_{i=1}^n \beta_i u_i$, set $E := \{i \in \llb 1, n \rrb: \alpha_i \ne \beta_i\}$, and suppose for a contradiction that $E \ne \emptyset$. Accordingly, let $i_0 := \max E$;
by symmetry, we can admit that $\alpha_{i_0} < \beta_{i_0}$. Then $\alpha_i = \beta_i$ for $i \in \llb i_0 + 1, n \fixed[0.2]{\text{ }} \rrb$, and we have
$\sum_{i=1}^{i_0} \alpha_i u_i = \sum_{i=1}^{i_0} \beta_i u_i$. This is however impossible, since our assumptions imply that
\begin{equation*}
\sum_{i=1}^{i_0} \alpha_i u_i \le \sum_{i=1}^{i_0 - 1} \alpha_i u_i + (\beta_{i_0} - 1) u_{i_0} < \beta_{i_0} u_{i_0} \le \sum_{i=1}^{i_0} \beta_i u_i.\qedhere
\end{equation*}
\end{proof}
\begin{lemma}
	\label{lem:basic}
	Given $u_1, \ldots, u_{n+1} \in \mathbf N^+$ such that
	\begin{enumerate*}[label={\rm (\alph{*})}]
		\item\label{it:lem:basic(a)} $u_1 + \cdots + u_i < \frac{1}{2} u_{i+1}$ for every $i \in \llb 1, n-1 \rrb$ and
		\item\label{it:lem:basic(b)} $2u_n < u_{n+1}$,
	\end{enumerate*}
	assume that $\sum_{i \in \mathcal{I}} u_i = \sum_{j \in \mathcal{J}} u_j + \sum_{k \in \mathcal{K}} u_k$ for some $\mathcal{I}, \mathcal{J}, \mathcal{K} \subseteq \llb 1, n+1 \rrb$. Then either $\mathcal{I} = \mathcal{J} \uplus \mathcal{K}$, or $n \in (\mathcal{J} \cap \mathcal{K}) \setminus \mathcal{I}$ and $n+1 \in \mathcal{I} \setminus (\mathcal{J} \cup \mathcal{K})$.
\end{lemma}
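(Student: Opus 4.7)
The plan is to rewrite $\sum_{i \in \mathcal{I}} u_i = \sum_{j \in \mathcal{J}} u_j + \sum_{k \in \mathcal{K}} u_k$ as the linear relation $\sum_{i=1}^{n+1} \alpha_i u_i = \sum_{i=1}^{n+1} \beta_i u_i$, where $\alpha_i := \mathbf{1}_{\mathcal{I}}(i) \in \{0,1\}$ and $\beta_i := \mathbf{1}_{\mathcal{J}}(i) + \mathbf{1}_{\mathcal{K}}(i) \in \{0,1,2\}$. In these terms, the equality $\mathcal{I} = \mathcal{J} \uplus \mathcal{K}$ is equivalent to having $\alpha_i = \beta_i$ for every index (which forces $\beta_i \le 1$, hence $\mathcal{J} \cap \mathcal{K} = \emptyset$), while the alternative conclusion amounts to the four simultaneous conditions $\alpha_n = 0$, $\beta_n = 2$, $\alpha_{n+1} = 1$, and $\beta_{n+1} = 0$. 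So I need to show that one of these two scenarios must always occur.

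Set $E := \{i \in \llb 1, n+1 \rrb : \alpha_i \ne \beta_i\}$. If $E = \emptyset$ we are done. Otherwise, let $i_0 := \max E$; since $\alpha_j = \beta_j$ for $j > i_0$, the main equation reduces to
\[
(\beta_{i_0} - \alpha_{i_0})\, u_{i_0} = \sum_{j < i_0}(\alpha_j - \beta_j)\, u_j,
\]
whose right-hand side has absolute value at most $2 \sum_{j < i_0} u_j$ and whose left-hand side has absolute value at least $u_{i_0}$. For $i_0 \le n$, hypothesis (a) (which is vacuous when $i_0 = 1$, forcing the right-hand side to be $0$) gives $2 \sum_{j < i_0} u_j < u_{i_0}$, a contradiction. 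Hence $i_0 = n+1$.

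The main obstacle is the analysis at $i_0 = n+1$, where hypothesis (b) only supplies the weaker bound $u_{n+1} > 2 u_n$ (rather than $u_{n+1} > 2(u_1 + \cdots + u_n)$), so Lemma \ref{lem:sidonicity} cannot be applied directly. I split on the sign of $\beta_{n+1} - \alpha_{n+1}$. If $\beta_{n+1} > \alpha_{n+1}$, then using $\alpha_j - \beta_j \le 1$ together with $\sum_{j \le n} u_j < \tfrac{3}{2} u_n$ (an immediate consequence of (a)) and (b), I obtain the chain
\[
u_{n+1} \le (\beta_{n+1} - \alpha_{n+1})\, u_{n+1} = \sum_{j \le n}(\alpha_j - \beta_j)\, u_j \le \sum_{j \le n} u_j < \tfrac{3}{2} u_n < u_{n+1},
\]
a contradiction. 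Hence $\alpha_{n+1} = 1$ and $\beta_{n+1} = 0$, which is the first half of the alternative.

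Plugging this back into the main equation yields $u_{n+1} = \sum_{j \le n}(\beta_j - \alpha_j)\, u_j$. If the coefficient $\beta_n - \alpha_n$ of $u_n$ were at most $1$, the whole right-hand side would be bounded above by $u_n + 2 \sum_{j < n} u_j < 2 u_n$ by (a), contradicting (b). Therefore $\beta_n - \alpha_n = 2$, which (given $\alpha_n \in \{0,1\}$ and $\beta_n \in \{0,1,2\}$) forces $\beta_n = 2$ and $\alpha_n = 0$, i.e., $n \in (\mathcal{J} \cap \mathcal{K}) \setminus \mathcal{I}$, completing the argument.
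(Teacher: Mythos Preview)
Your proof is correct and follows essentially the same approach as the paper: both encode the index sets via indicator-function coefficients, define $E$ as the set of indices where the two sides disagree, analyze the top index $i_0 = \max E$, and use the growth conditions (a) and (b) to force $i_0 = n+1$ and then pin down $\alpha_n,\beta_n,\alpha_{n+1},\beta_{n+1}$. The only difference is organizational (you rule out $i_0 \le n$ in one shot via $|\text{LHS}| \ge u_{i_0} > 2\sum_{j<i_0}u_j \ge |\text{RHS}|$, whereas the paper first establishes the sign of $\beta_{i_0}-\alpha_{i_0}$ and then deduces $i_0 = n+1$), but the underlying inequalities are the same.
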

\begin{proof}
	Set $x := \sum_{j \in \mathcal{J}} u_j$, $y := \sum_{k \in \mathcal{K}} u_k$, and $z := \sum_{i \in \mathcal{I}} u_i$. We denote by $\delta_S$, for a fixed $S \subseteq \mathbf N$, the function $\mathbf N \to \{0, 1\} \subseteq \mathbf N$ defined by $\delta_S(i) := 1$ if $i \in S$ and $\delta_S(i) := 0$ otherwise. 
	Accordingly, for each $i \in \llb 1, n + 1 \rrb$ we take $\alpha_i := \delta_\mathcal{J}(i)$, $\beta_i := \delta_\mathcal{K}(i)$, and $\gamma_i := \delta_\mathcal{I}(i)$; and we let $E := \fixed[-0.15]{\text{ }}\bigl\{i \in \llb 1, n+1 \rrb: \alpha_i + \beta_i \ne \gamma_i\bigr\}$. We distinguish two cases:
	\vskip 0.1cm
	\textsc{Case 1:} $E = \emptyset$. We have $
	\alpha_i + \beta_i = \gamma_i$ for every $i \in \llb 1, n+1 \rrb$,
	which is clearly possible if and only if $\mathcal{I} = \mathcal{J} \uplus \mathcal{K}$ (in particular, note that $\gamma_i \le 1 < 2 = \alpha_i + \beta_i$ for every $i \in \mathcal{J} \cap \mathcal{K}$).
	\vskip 0.1cm
	\textsc{Case 2:} $E \ne \emptyset$. Let $i_0 := \max E$. Since $\alpha_i + \beta_i = \gamma_i$ for $i \in \llb i_0 + 1, n+1 \rrb$, we have
	\begin{equation*}
	\label{equ:remove-the-identical-parts}
	\sum_{i=1}^{i_0} \gamma_i u_i = \sum_{i=1}^{i_0} (\alpha_i + \beta_i) u_i.
	\end{equation*}
	On the other hand, we derive from \ref{it:lem:basic(a)} and \ref{it:lem:basic(b)} that $u_1 + \cdots + u_i < 2u_i < u_{i+1}$ for all $i \in \llb 1, n \rrb$. Thus, it is immediate that $\alpha_{i_0} + \beta_{i_0} < \gamma_{i_0}$; otherwise,
	\begin{equation*}
	\sum_{i=1}^{i_0} (\alpha_i + \beta_i) u_i \ge (\gamma_{i_0} + 1) u_{i_0} > \gamma_{i_0} u_{i_0} + \sum_{i=1}^{i_0-1} u_i \ge \sum_{i=1}^{i_0} \gamma_i u_i,
	\end{equation*}
	a contradiction. So $\alpha_{i_0} = \beta_{i_0} = 0$ and $\gamma_{i_0} = 1$; moreover, we must have $i_0 = n+1$, or else
	\begin{equation*}
	\sum_{i=1}^{i_0} (\alpha_i + \beta_i) u_i \le (\gamma_{i_0} - 1) u_{i_0} + \sum_{i=1}^{i_0-1} (\alpha_i + \beta_i) u_i \le (\gamma_{i_0} - 1) u_{i_0} + 2\sum_{i=1}^{i_0-1} u_i
	\fixed[-0.25]{\text{ }}\stackrel{\ref{it:lem:basic(a)}}{< }\fixed[-0.25]{\text{ }}
	\gamma_{i_0} u_{i_0} \le \sum_{i=1}^{i_0} \gamma_i u_i,
	\end{equation*}
	which is still impossible.
	It follows that $\gamma_n = 0$ and $\alpha_n = \beta_n = 1$, since
	\begin{equation*}
	\begin{split}
	\gamma_n u_n + u_{n+1} & \le \sum_{i=1}^{n+1} \gamma_i u_i = x+y
	\le (\alpha_n + \beta_n) u_n + 2\sum_{i=1}^{n-1} u_i \\
	& \stackrel{\ref{it:lem:basic(a)}}{< }\fixed[-0.25]{\text{ }}
	(\alpha_n + \beta_n + 1)u_n
	\fixed[-0.25]{\text{ }}\stackrel{\ref{it:lem:basic(b)}}{< }\fixed[-0.25]{\text{ }}
	u_n \cdot \min(\alpha_n, \beta_n) + u_{n+1}.
	\end{split}
	\end{equation*}
	To wit, $n \in (\mathcal{J} \cap \mathcal{K}) \setminus \mathcal{I}$ and $n+1 \in \mathcal{I} \setminus (\mathcal{J} \cup \mathcal{K})$.
\end{proof}
\begin{lemma}
	\label{lem:pre-sidonicity}
	Let $u_1, \ldots, u_{n+1} \in \mathbf N^+$ such that
	\begin{enumerate*}[label={\rm (\alph{*})}]
		\item\label{it:lem:pre-sidonicity(a)} $u_1 + \cdots + u_i < \frac{1}{2} u_{i+1}$ for every $i \in \llb 1, n-1 \rrb$ and
		\item\label{it:lem:pre-sidonicity(b)} $2u_n < u_{n+1}$.
	\end{enumerate*}
	Assume in addition that 
	\begin{equation}\label{equ:decomposition}
	\sum_{i=1}^{n+1} \{0, u_i\} = X + Y,
	\quad\text{for some } X, Y \in \mathcal P_{\fin,0}(\mathbf N);
	\end{equation}
	and set $I_X := \bigl\{i \in \llb 1, n+1 \rrb: u_i \in X\bigr\}$ and
	$I_Y := \bigl\{i \in \llb 1, n+1 \rrb: u_i \in Y \bigr\}$. The following hold:
	\begin{enumerate}[label={\rm (\roman{*})}]
		\item\label{it:lem:pre-sidonicity(i)} $\llb 1, n+1 \rrb = I_X \uplus I_Y$.
		\item\label{it:lem:pre-sidonicity(iii)} If $\sum_{j \in J} u_j \in X$ for some $J \subseteq \llb 1, n+1 \rrb$, then $J \setminus \{n\} \subseteq I_X$ \textup{(}and similarly for $Y$\textup{)}.
		\item\label{it:lem:pre-sidonicity(ii)} If $\sum_{j \in J} u_j \in X$, $\sum_{k \in K} u_k \in Y$, and $\sum_{i \in I} u_i = \sum_{j \in J} u_j + \sum_{k \in K} u_k$ for some $I, J, K \subseteq \llb 1, n+1 \rrb$, then either $I = J \uplus K$; or $n \notin I$, $J \cap K = \{n\}$, and $n+1 \in I \setminus (J \cup K)$.
	\end{enumerate}
\end{lemma}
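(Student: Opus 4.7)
The plan is to derive the three parts from Lemma~\ref{lem:basic} in the order (i) $\to$ (ii) $\to$ (iii). Throughout, I will use that $X, Y \subseteq X + Y = \sum_{i=1}^{n+1} \{0, u_i\}$ (since $0 \in X \cap Y$), so every element of $X$ or of $Y$ is a subset sum $\sum_{s \in S} u_s$ with $S \subseteq \llb 1, n+1 \rrb$, which makes it available as input to Lemma~\ref{lem:basic}.

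For (i), I first show $I_X \cap I_Y = \emptyset$: if $u_i \in X \cap Y$, then $2u_i \in X + Y$ is a subset sum, and Lemma~\ref{lem:basic} with $\mathcal J = \mathcal K = \{i\}$ (which cannot form a disjoint union) forces the bad alternative, yielding $i = n$ and $n+1 \in \mathcal I$, whence $2u_n \ge u_{n+1}$, contradicting (b). To show $\llb 1, n+1 \rrb \subseteq I_X \cup I_Y$, I fix $i$, write $u_i = \sum_{j \in J} u_j + \sum_{k \in K} u_k$ with $\sum_J u_j \in X$ and $\sum_K u_k \in Y$, and apply Lemma~\ref{lem:basic}: either $\{i\} = J \uplus K$ (so $u_i \in X \cup Y$) or the bad alternative holds. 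The bad alternative is immediately ruled out for $i \le n$ (it would require $n+1 \in \{i\}$), leaving only the case $i = n+1$ with $x := \sum_J u_j \ge u_n$ in $X$ and $y := \sum_K u_k \ge u_n$ in $Y$ to be excluded.

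This last exclusion will be the main obstacle. I plan to handle it by applying Lemma~\ref{lem:basic} a second time, to $u_n + u_{n+1} = \sum_{j \in J'} u_j + \sum_{k \in K'} u_k$ with $\sum_{J'} u_j \in X$ and $\sum_{K'} u_k \in Y$; since $\mathcal I = \{n, n+1\}$ contains $n$, the bad alternative is now unavailable and $\{n, n+1\} = J' \uplus K'$. The subcases $(J', K') \in \{(\{n\}, \{n+1\}), (\{n+1\}, \{n\})\}$ give $u_{n+1} \in X \cup Y$ outright; the remaining two put $u_n + u_{n+1}$ in $X$ or in $Y$, and then pairing that element with the $y \ge u_n$ (resp.\ $x \ge u_n$) from the assumed bad decomposition of $u_{n+1}$ produces an element of $X + Y$ of value at least $2u_n + u_{n+1}$, which exceeds $\sup(X+Y) = u_1 + \cdots + u_{n+1}$ since (a) gives $u_1 + \cdots + u_{n-1} < u_n$ (trivially when $n = 1$); contradiction.

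With (i) in hand, (ii) and (iii) will follow quickly. For (ii): if $\sum_J u_j \in X$ but $u_{j_0} \in Y$ for some $j_0 \in J \setminus \{n\}$, then $\sum_J u_j + u_{j_0} \in X + Y$ is a subset sum, and Lemma~\ref{lem:basic} with $\mathcal J = J$ and $\mathcal K = \{j_0\}$ refutes both alternatives: $J \uplus \{j_0\}$ is not a disjoint union because $j_0 \in J$, and the bad one would force $n \in J \cap \{j_0\}$, i.e.\ $j_0 = n$. For (iii), Lemma~\ref{lem:basic} yields either $I = J \uplus K$ (done) or $n \in (J \cap K) \setminus I$ and $n+1 \in I \setminus (J \cup K)$; to strengthen $n \in J \cap K$ to $J \cap K = \{n\}$, any $j_0 \in (J \cap K) \setminus \{n\}$ would, by (ii) applied to $\sum_J u_j \in X$ and to $\sum_K u_k \in Y$, lie in both $I_X$ and $I_Y$, contradicting (i).
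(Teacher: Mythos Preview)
Your proposal is correct and follows essentially the same route as the paper: both arguments hinge on repeated applications of Lemma~\ref{lem:basic}, prove the three parts in the order \ref{it:lem:pre-sidonicity(i)}~$\to$~\ref{it:lem:pre-sidonicity(iii)}~$\to$~\ref{it:lem:pre-sidonicity(ii)}, and handle \ref{it:lem:pre-sidonicity(iii)} and \ref{it:lem:pre-sidonicity(ii)} in nearly identical fashion. The one noteworthy difference is in the final step of \ref{it:lem:pre-sidonicity(i)} (showing $n+1 \in I_X \cup I_Y$): the paper first fixes $u_n \in X$ and then splits into two cases according to whether $Y$ contains a subset sum involving $u_n$, whereas you observe that the ``bad'' decomposition of $u_{n+1}$ itself already supplies an $x \in X$ and a $y \in Y$ with $x, y \ge u_n$, which lets you treat the decomposition of $u_n + u_{n+1}$ uniformly and dispose of the subcases $J' = \{n,n+1\}$ or $K' = \{n,n+1\}$ by the simple size bound $2u_n + u_{n+1} > u_1 + \cdots + u_{n+1}$; this is a modest but genuine streamlining of the paper's argument.
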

\begin{proof}
\ref{it:lem:pre-sidonicity(i)} As in the proof of Lemma \ref{lem:basic}, it is easy to derive from conditions \ref{it:lem:pre-sidonicity(a)} and \ref{it:lem:pre-sidonicity(b)} that
\begin{equation}
\label{equ:easy-inequality:lem:pre-sidonicity}
u_i \le u_1 + \cdots + u_i < 2u_i < u_{i+1},
\quad \text{for all }i \in \llb 1, n \rrb.
\end{equation}
Consequently, we see (from \eqref{equ:decomposition}) that $2u_i \notin X + Y$, and hence $u_i \notin X \cap Y$, for all $i \in \llb 1, n+1 \rrb$.

Besides that, let $i_0 \in \llb 1, n \rrb$. Since $u_{i_0} \in X+Y$, there are $\mathcal J, \mathcal K \subseteq \llb 1, n+1 \rrb$ with $x := \sum_{j \in \mathcal J} u_j \in X$, $y := \sum_{k \in \mathcal K} u_k \in Y$, and $x + y = u_{i_0}$. Thus, we obtain from Lemma \ref{lem:basic} (applied with $\mathcal I = \{i_0\}$, $\mathcal J = J$, and $\mathcal K = K$) that $J \uplus K = \{i_0\}$, which is only possible if $J = \emptyset$ or $K = \emptyset$, namely, $u_{i_0} \in X \cup Y$. This, together with \eqref{equ:easy-inequality:lem:pre-sidonicity}, shows that
$$
\llb 1, n \rrb \subseteq I_X \cup I_Y
\quad\text{and}\quad
I_X \cap I_Y = \emptyset.
$$
In particular, we can assume (without loss of generality) that $u_n \in X
$, and it only remains to prove that $n+1 \in I_X \cup I_Y$. For, suppose the contrary and set $U := \left\{\sum_{i \in I} u_i: I \subseteq \llb 1, n-1\rrb\right\}$.
We distinguish two cases.
\vskip 0.1cm
\textsc{Case 1:} $Y \cap (U + u_n) = \emptyset$.
Because $u_{n+1} \in (X+Y) \setminus (X \cup Y)$, we must have that
$x + y = u_{n+1}$ for some $x \in X$ and $y \in Y$ with $x, y < u_{n+1}$. So we obtain that
$$
u_{n+1} = x+y
\fixed[-0.25]{\text{ }}\stackrel{\eqref{equ:easy-inequality:lem:pre-sidonicity}}{\le }\fixed[-0.25]{\text{ }}
\sum_{i=1}^n u_i + \sum_{i+1}^{n-1} u_i = u_n + 2 \sum_{i=1}^{n-1} u_i
\fixed[-0.25]{\text{ }}\stackrel{\ref{it:lem:pre-sidonicity(a)}}{< }\fixed[-0.25]{\text{ }}
2u_n
\fixed[-0.25]{\text{ }}\stackrel{\ref{it:lem:pre-sidonicity(b)}}{< }\fixed[-0.25]{\text{ }}
u_{n+1},
$$
which is impossible and completes the analysis of the present case.
\vskip 0.1cm
\textsc{Case 2:} $Y \cap (U + u_n) \ne \emptyset$.
Since $u_n + u_{n+1} \in X + Y$ and $u_{n+1} \notin X \cup Y$, there exist two index sets $J, K \subseteq \llb 1, n+1 \rrb$, none of which is equal to $\{n+1\}$, such that $x := \sum_{j \in J} u_j \in X$, $y := \sum_{k \in K} u_k \in Y$, and $x + y = u_n + u_{n+1}$. It follows that $n+1 \in J \cup K$; otherwise,
$$
x+y \le 2\sum_{i=1}^n u_i = 2u_n + 2\sum_{i=1}^{n-1} u_i
\fixed[-0.25]{\text{ }}\stackrel{\ref{it:lem:pre-sidonicity(b)}}{< }\fixed[-0.25]{\text{ }}
u_{n+1} + 2\sum_{i=1}^{n-1} u_i \fixed[-0.25]{\text{ }}\stackrel{\ref{it:lem:pre-sidonicity(a)}}{< }\fixed[-0.25]{\text{ }}
u_{n+1} + u_n,
$$
a contradiction. Therefore, we apply Lemma \ref{lem:basic} (with $\mathcal I = \{n,n+1\}$, $\mathcal J = J$, and $\mathcal K = K$) to find that $\{n,n+1\} = J \uplus K$.
On the other hand, recalling that $u_n \in X$ and $Y \cap (U + u_n) \ne \emptyset$, and taking $K_0$ to be any subset of $\llb 1, n+1 \rrb$ such that $n \in K_0$ and $\sum_{k \in K_0} u_k \in Y$, we get again from Lemma \ref{lem:basic} (applied first with $\mathcal{J} = J$ and $\mathcal{K} = K_0$, then with $\mathcal{J} = \{n\}$ and $\mathcal{K} = K$) that neither $J$ nor $K$ can be equal to $\{n,n+1\}$. But since $\{n,n+1\} = J \uplus K$, this is only possible if $J = \{n+1\}$ or $K = \{n+1\}$, and hence $u_{n+1} \in X \cup Y$, which is still a contradiction.
\vskip 0.1cm
\ref{it:lem:pre-sidonicity(iii)} Suppose that $x := \sum_{j \in J} u_j \in X$ for some $J \subseteq \llb 1, n+1 \rrb$, but $J \setminus \{n\} \not \subseteq I_X$, i.e., there exists an index $i \in J \setminus \{n\}$ such that $i \notin I_X$. Then $i \in I_Y$, by point \ref{it:lem:pre-sidonicity(i)}. So $x + u_i \in X + Y$, in contradiction to Lemma \ref{lem:basic} (applied with $\mathcal J = J$ and $\mathcal K = \{i\}$).
\vskip 0.1cm
\ref{it:lem:pre-sidonicity(ii)} Set $x := \sum_{j \in J} u_j$ and $y := \sum_{k \in K} u_k$, and assume that $\sum_{i \in I} u_i = x+y$, but $I \ne J \uplus K$. Then Lemma \ref{lem:basic} (applied with $\mathcal I = I$, $\mathcal J = J$, and $\mathcal K = K$) yields $n \in (J \cap K) \setminus I$ and $n+1 \in I \setminus (J \cup K)$. It thus follows from \ref{it:lem:pre-sidonicity(iii)} that $J \setminus \{n\} \subseteq I_X$ and $K \setminus \{n\} \subseteq I_Y$. On the other hand, we know from \ref{it:lem:pre-sidonicity(i)} that $I_X \uplus I_Y = \llb 1, n+1 \rrb$. So, putting it all together, we can conclude that $J \cap K = \{n\}$.
\end{proof}
\begin{lemma}
\label{lem:fund-lemma}
Let $u_1, \ldots, u_{n+1} \in \mathbf N^+$ be given so that
\begin{enumerate*}[label={\rm (\alph{*})}]
\item\label{it:lem:stronger-sidonicity(a1)} $u_1 + \cdots + u_n \le u_{n+1} - u_n$,
\item\label{it:lem:stronger-sidonicity(b1)} $2u_n \ne u_{n+1}$, and
\item\label{it:lem:stronger-sidonicity(c1)} $u_1 + \cdots + u_i < \frac{1}{2} u_{i+1}$ for all $i \in \llb 1, n-1 \rrb$.
\end{enumerate*}
Next, let $X, Y \in \mathcal P_\fuN(\mathbf N)$ such that
\begin{equation}
\label{equ:decomposition-into-sum}
\{0, u_1\} + \cdots + \{0, u_{n+1}\} = X + Y,
\end{equation}
and set
$
I_X := \bigl\{i \in \llb 1, n+1 \rrb: u_i \in X\bigr\}$ and
$I_Y := \bigl\{i \in \llb 1, n+1 \rrb: u_i \in Y \bigr\}$.
The following hold:
\begin{enumerate}[label={\rm (\roman{*})}]
\item\label{it:fund-lemma(iii)} $X \setminus \{u_1 + \cdots + u_n\} = \sum_{i \in I_X} \{0, u_i\}$ and $Y \setminus \{u_1 + \cdots + u_n\} = \sum_{i \in I_Y} \{0, u_i\}$.
\item\label{it:fund-lemma(ii)} If $X \ne \sum_{i \in I_X} \{0, u_i\}$ or $Y \ne \sum_{i \in I_Y} \{0, u_i\}$, then $n \ge 2$, $u_1 + \cdots + u_n = u_{n+1} - u_n$, and one of $X$ and $Y$ is equal to $\{0, u_n\}$.
\end{enumerate}
\end{lemma}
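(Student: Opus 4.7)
The strategy is to reduce the problem to Lemma \ref{lem:pre-sidonicity}: condition (c) coincides with condition (a) there, and (a)+(b) here yield $2u_n < u_{n+1}$, which is condition (b) of that lemma. Moreover, (a) and (c) together make $(u_i)_{i=1}^{n+1}$ super-increasing, so every element $s$ of the full sumset $S := \sum_{i=1}^{n+1} \{0, u_i\}$ admits a unique representation $s = \sum_{i \in I(s)} u_i$ with $I(s) \subseteq \llb 1, n+1 \rrb$. I will freely invoke the partition $\llb 1, n+1 \rrb = I_X \uplus I_Y$, the containment $I(z) \setminus \{n\} \subseteq I_X$ for $z \in X$ (and symmetrically for $Y$), and the sum-splitting constraint of Lemma \ref{lem:pre-sidonicity}(iii) on decompositions $s = x + y$ in $S$.

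For part (i), I argue both inclusions. For $X \setminus \{u_1 + \cdots + u_n\} \subseteq \sum_{i \in I_X} \{0, u_i\}$: given $x \in X$ with $x \ne u_1 + \cdots + u_n$, the only obstacle to $I(x) \subseteq I_X$ is $n \in I(x) \cap I_Y$. Applying Lemma \ref{lem:pre-sidonicity}(iii) to $x + u_n \in S$ with $J := I(x)$ and $K := \{n\}$ (whose intersection is $\{n\}$) lands in the exceptional branch, forcing $n \notin I(x + u_n)$ and $n + 1 \in I(x + u_n) \setminus I(x)$. Equating $x + u_n = 2u_n + \sum_{i \in I(x) \setminus \{n\}} u_i = u_{n+1} + \sum_{i \in I(x + u_n) \cap \llb 1, n-1 \rrb} u_i$ and combining the lower bound $u_{n+1} - 2u_n \ge u_1 + \cdots + u_{n-1}$ from (a) with the trivial upper bound $\sum_{i \in I(x) \setminus \{n\}} u_i \le u_1 + \cdots + u_{n-1}$ forces equality throughout, so $I(x) = \llb 1, n \rrb$ and $x = u_1 + \cdots + u_n$, contradiction. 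For $\sum_{i \in I_X} \{0, u_i\} \subseteq X$: take $s = \sum_{i \in I} u_i$ with $I \subseteq I_X$, write $s = x + y$, and apply Lemma \ref{lem:pre-sidonicity}(iii). In the main branch $I = I(x) \uplus I(y)$, combining $I(y) \subseteq I \subseteq I_X$ with $I(y) \setminus \{n\} \subseteq I_Y$ pins $I(y) \subseteq \{n\}$; the case $I(y) = \{n\}$ would put $n \in I_X \cap I_Y = \emptyset$, impossible, so $y = 0$ and $s = x \in X$. In the exceptional branch, a parallel arithmetic forces $s = u_{n+1}$, and $n + 1 \in I \subseteq I_X$ delivers $u_{n+1} \in X$ directly.

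For part (ii), assume (by symmetry) that $X \ne \sum_{i \in I_X} \{0, u_i\}$. By (i), $u_1 + \cdots + u_n \in X \setminus \sum_{i \in I_X} \{0, u_i\}$, so $\llb 1, n \rrb \not\subseteq I_X$; yet Lemma \ref{lem:pre-sidonicity}(ii) applied to $u_1 + \cdots + u_n \in X$ gives $\llb 1, n-1 \rrb \subseteq I_X$, forcing $n \in I_Y$. Re-running the forward computation of (i) with $x := u_1 + \cdots + u_n$ now produces equality in (a), i.e., $u_1 + \cdots + u_n = u_{n+1} - u_n$. If $n = 1$, then $u_1 \in X$ would give $1 \in I_X \cap I_Y = \emptyset$, impossible; hence $n \ge 2$. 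Finally, $I_Y \subseteq \{n, n+1\}$: the option $I_Y = \{n, n+1\}$ is ruled out because $u_n + u_{n+1} \in \sum_{i \in I_Y} \{0, u_i\} \subseteq Y$ would push $(u_1 + \cdots + u_n) + (u_n + u_{n+1}) = 2u_{n+1}$ into $X + Y = S$, contradicting $\max S = 2u_{n+1} - u_n$; and in the remaining case $I_Y = \{n\}$, the alternative $u_1 + \cdots + u_n \in Y$ is excluded by a similar out-of-range check (the element $(u_1 + \cdots + u_{n-1} + u_{n+1}) + (u_{n+1} - u_n) = 3u_{n+1} - 3u_n$ of $X + Y$ exceeds $\max S$ since $u_{n+1} > 2u_n$). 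Thus $Y = \{0, u_n\}$. The principal technical difficulty is the combinatorial bookkeeping in part (i), where the sharp inequality in (a) is essential to isolate $u_1 + \cdots + u_n$ as the unique element at which the representation in $\sum_{i \in I_X} \{0, u_i\}$ can fail.
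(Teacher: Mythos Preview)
Your proof is correct and follows essentially the same route as the paper's: both reduce everything to Lemma~\ref{lem:pre-sidonicity} (the partition $I_X \uplus I_Y = \llb 1,n+1\rrb$, the containment $I(z)\setminus\{n\}\subseteq I_X$ for $z\in X$, and the dichotomy of part~(iii)), and both exploit the sharp inequality~\ref{it:lem:stronger-sidonicity(a1)} to isolate $u_1+\cdots+u_n$ as the unique anomalous element. The main organizational difference is that the paper fixes $u_n\in X$ without loss of generality and breaks the argument into four claims (A--D), establishing the full structure $X=\{0,u_n\}$ of the anomalous case first (Claim~A) before proving the inclusions; you instead argue symmetrically, prove the two inclusions of~\ref{it:fund-lemma(iii)} directly, and only afterwards handle~\ref{it:fund-lemma(ii)} via out-of-range checks on $X+Y$. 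Your treatment is somewhat more streamlined, but the underlying ideas are the same.
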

\begin{proof}
To start with, we note for future reference that conditions \ref{it:lem:stronger-sidonicity(a1)}-\ref{it:lem:stronger-sidonicity(c1)} yield
\begin{equation}
\label{equ:easy-inequality}
u_1 + \cdots + u_i < 2u_i < u_{i+1}
\quad \text{for all }i \in \llb 1, n \rrb,
\end{equation}
and for the sake of notation we set
$$
U := \sum_{i=1}^{n+1} \{0, u_i\},
\quad
U^\ast := \sum_{i=1}^{n-1} \{0, u_i\},
\quad
U^\prime := U^\ast + \{0, u_n\},
\quad\text{and}\quad
U^{\prime\prime} := U^\ast + \{0, u_{n+1}\}.
$$
To ease the exposition, we break up the proof into a series of claims. We will often use without comment that $X, Y \subseteq U$, as is implied by Proposition \ref{prop:valuations}\ref{it:prop:valuations(i)}.
Moreover, we assume, based on Lemma \ref{lem:pre-sidonicity}\ref{it:lem:pre-sidonicity(i)}, that
$u_n \in X$ (as the statements to be proved are symmetric with respect to $X$ and $Y$).
\begin{claim}
\label{cl:lem:stronger-sidonicity(E)}
Assume that $Y \cap (U^{\prime\prime} + u_n) \ne \emptyset$. Then the following hold:
\begin{enumerate}[label={\rm (\textsc{a}\arabic{*})}]
\item\label{it:cl:lem:stronger-sidonicity(E)(1)} $n \ge 2$, $Y \cap (U^{\prime\prime} + u_n) = \{u_1 + \cdots + u_n\} = \{u_{n+1} - u_n\}$, and $\llb 1, n-1 \rrb \subseteq I_Y$.
\item\label{it:cl:lem:stronger-sidonicity(E)(2)} $X = \{0, u_n\}$ and $I_Y = \llb 1, n + 1 \rrb \setminus \{n\}$.
\end{enumerate}
\end{claim}
\begin{proof}[Proof of Claim \textup{\ref{cl:lem:stronger-sidonicity(E)}}]
\ref{it:cl:lem:stronger-sidonicity(E)(1)} Let $K \subseteq \llb 1, n + 1 \rrb$ such that $n \in K$ and take $y := \sum_{k \in K} u_k \in Y$.
Since $u_n \in X$, we get from Lemma \ref{lem:pre-sidonicity}\ref{it:lem:pre-sidonicity(ii)} (applied with $\mathcal{J} = \{n\}$ and $\mathcal{K} = K$) that $K \subseteq \llb 1, n \rrb$ and $u_n + y \ge u_{n+1}$.
So, it follows from condition \ref{it:lem:stronger-sidonicity(a1)} that $u_n + y = u_{n+1}$, which is only possible if $n \ge 2$ (recall that $2u_n \ne u_{n+1}$) and $K = \llb 1, n \rrb$, i.e., $y = u_1 + \cdots + u_n$. Then $\llb 1, n-1 \rrb \subseteq I_Y$, by Lemma \ref{lem:pre-sidonicity}\ref{it:lem:pre-sidonicity(iii)}.

\ref{it:cl:lem:stronger-sidonicity(E)(2)} Let $x \in X^+$. Then $x = \sum_{j \in J} u_j \in X$ for some non-empty $J \subseteq \llb 1, n+1 \rrb$, and we get from Lemma \ref{lem:pre-sidonicity}\ref{it:lem:pre-sidonicity(ii)} $\bigl($applied with $\mathcal{J} = J$ and $\mathcal{K} = \llb 1, n \rrb\bigr)$ that $J \cap \llb 1, n \rrb = \emptyset$ or $n \in J \subseteq \llb 1, n \rrb$. In particular, the maximum of $X$ is $\le u_{n+1}$, and hence $X \setminus \{u_{n+1}\} \subseteq U^\prime$, because $J \cap \llb 1, n \rrb = \emptyset$ only if $J = \{n+1\}$.

Suppose that $J = \{n+1\}$, namely, $u_{n+1} \in X$. Then Lemma \ref{lem:pre-sidonicity}\ref{it:lem:pre-sidonicity(ii)} yields $Y \cap (U^\prime + u_{n+1}) = \emptyset$, and we find that $X+Y \subseteq (U^\prime + Y) \cup (X + U^\ast) \cup (u_1 + \cdots + u_{n+1})$, for we know from \ref{it:cl:lem:stronger-sidonicity(E)(1)} that $n \ge 2$ and $Y \setminus \{u_1 + \cdots + u_n\} \subseteq U^\ast$. On the other hand, we see that
\begin{equation*}
\max(U^\prime + Y) = 2(u_1 + \cdots + u_n)
\fixed[-0.25]{\text{ }}\stackrel{\ref{it:cl:lem:stronger-sidonicity(E)(1)}}{=}\fixed[-0.25]{\text{ }}
u_{n+1} + \sum_{i=1}^{n-1} u_i = \max(X + U^\ast)
\fixed[-0.25]{\text{ }}\stackrel{\eqref{equ:easy-inequality}}{< }\fixed[-0.25]{\text{ }}
u_{n+1} + u_n,
\end{equation*}
and it is clear that $u_n + u_{n+1} < u_1 + \cdots + u_{n+1}$ (because $n \ge 2$). Thus $u_n+u_{n+1} \notin X + Y = U$, which is, however, a contradiction.
So, putting it all together, we must conclude that $n \in J \subseteq \llb 1, n \rrb$. 

But we have from Lemma \ref{lem:pre-sidonicity}\ref{it:lem:pre-sidonicity(iii)} and \ref{it:cl:lem:stronger-sidonicity(E)(2)} that $J \setminus \{n\} \subseteq I_X$ and $\llb 1, n-1 \rrb \subseteq I_Y$; and from Lemma \ref{lem:pre-sidonicity}\ref{it:lem:pre-sidonicity(i)} that $I_X \uplus I_Y = \llb 1, n+1 \rrb$. So $J = \{n\}$, and since $x$ was an arbitrary element in $X^+$ and we are assuming that $u_n \in X$, it follows that $X = \{0, u_n\}$ and $I_Y = \llb 1, n+1 \rrb \setminus \{n\}$.
\end{proof}
\begin{claim}
\label{cl:lem:stronger-sidonicity(F)}
Let $J, K \subseteq \llb 1, n+1 \rrb$ such that $\sum_{j \in J} u_j \in X$ and $\sum_{k \in K} u_k \in Y$. Then one \textup{(}and only one\textup{)} of the following two cases occurs:
\begin{enumerate}[label={\rm (\textsc{b}\arabic{*})}]
\item\label{cl:lem:stronger-sidonicity(Q)(f1)} $J \subseteq I_X$, $K \subseteq I_Y$, and $J \cap K = \emptyset$.
\item\label{cl:lem:stronger-sidonicity(Q)(f2)} $J \subseteq I_X = \{n\}$, $K = \llb 1, n \rrb$, and conditions \ref{it:cl:lem:stronger-sidonicity(E)(1)} and \ref{it:cl:lem:stronger-sidonicity(E)(2)} of Claim \textup{\ref{cl:lem:stronger-sidonicity(E)}} are satisfied.
\end{enumerate}
\end{claim}
\begin{proof}[Proof of Claim \textup{\ref{cl:lem:stronger-sidonicity(F)}}]
Set $x := \sum_{j \in J} u_j$ and $y := \sum_{k \in K} u_k$. We distinguish two cases:
\vskip 0.1cm
\textsc{Case 1}: $K \subseteq I_Y$. 
We prove $J \cap I_Y = \emptyset$; this will give $J \subseteq I_X$ and $J \cap K = \emptyset$, since $J \subseteq \llb 1, n+1 \rrb$ and, by Lemma \ref{lem:pre-sidonicity}\ref{it:lem:pre-sidonicity(i)}, $I_X \uplus I_Y = \llb 1, n+1 \rrb$.
For, assume to the contrary that $J \cap I_Y$ is non-empty, and let $i_0 \in J \cap I_Y$. Then we infer from Lemma \ref{lem:pre-sidonicity}\ref{it:lem:pre-sidonicity(ii)} $\bigl($applied with $\mathcal{J} = J$ and $\mathcal{K} = \{i_0\}\bigr)$ that $i_0 = n$, and hence $u_n \in Y$, in contradiction to Claim \ref{cl:lem:stronger-sidonicity(E)}.
\vskip 0.1cm
\textsc{Case 2}: $K \not\subseteq I_Y$. Since $K \subseteq \llb 1, n+1 \rrb$ and, by Lemma \ref{lem:pre-sidonicity}\ref{it:lem:pre-sidonicity(i)}, $I_X \uplus I_Y = \llb 1, n+1 \rrb$, it is clear that $I_X \cap K \ne \emptyset$. Let $i_0 \in I_X \cap K$. Then Lemma \ref{lem:pre-sidonicity}\ref{it:lem:pre-sidonicity(ii)} $\bigl($applied with $\mathcal{J} = \{i_0\}$ and $\mathcal{K} = K\bigr)$ yields $i_0 = n$, which implies by Claim \ref{cl:lem:stronger-sidonicity(E)} that $X = \{0, u_n\}$ and $Y \cap (U^\ast + u_n) = \{u_1 + \cdots + u_n\}$.
So $J \subseteq I_X = \{n\}$ and $y = u_1+\cdots+u_n$, and by \eqref{equ:easy-inequality} and Lemma \ref{lem:sidonicity} this is possible only if $K = \llb 1, n \rrb$.
\end{proof}
\begin{claim}
\label{cl:lem:stronger-sidonicity(G)}
Given $I \subseteq \llb 1, n+1 \rrb$, there exist $J, K \subseteq \llb 1, n+1 \rrb$ for which $\sum_{j \in J} u_j \in X$, $\sum_{k \in K} u_k \in Y$, and $\sum_{i \in I} u_i = \sum_{j \in J} u_j + \sum_{k \in K} u_k$. Moreover, one \textup{(}and only one\textup{)} of the following holds:
\begin{enumerate}[label={\rm (\textsc{c}\arabic{*})}]
\item $J \uplus K = I$, $J \subseteq I_X$, and $K \subseteq I_Y$.
\item $J \subseteq I_X = \{n\}$ and $K = \llb 1, n \rrb$.
\end{enumerate}
\end{claim}
\begin{proof}[Proof of Claim \textup{\ref{cl:lem:stronger-sidonicity(G)}}]
Set $z := \sum_{i \in I} u_i$. Then $z \in X+Y = U$, and hence there exist $J, K \subseteq \llb 1, n+1 \rrb$ such that
$x := \sum_{j \in J} u_j \in X$,
$y : = \sum_{k \in K} u_k \in Y$, and $z = x + y$.
If $K \subseteq I_Y$, then $J \cap K = \emptyset$ and $J \subseteq I_X$ by point \ref{cl:lem:stronger-sidonicity(Q)(f1)} of Claim \ref{cl:lem:stronger-sidonicity(F)}, hence $J \uplus K = I$ by Lemma \ref{lem:basic} (applied with $\mathcal I = I$, $\mathcal J = J$, and $\mathcal K = K$). Otherwise, point \ref{cl:lem:stronger-sidonicity(Q)(f2)} of Claim \ref{cl:lem:stronger-sidonicity(F)} yields $J \subseteq I_X = \{n\}$ and $K = \llb 1, n \rrb$.
\end{proof}
\begin{claim}
\label{cl:lem:stronger-sidonicity(H)}
$\sum_{i \in \mathcal{I}_X} u_i \in X$ for every $\mathcal{I}_X \subseteq I_X$, and $\sum_{i \in \mathcal{I}_Y} u_i \in Y$ for every $\mathcal{I}_Y \subseteq I_Y$.
\end{claim}
\begin{proof}[Proof of Claim \textup{\ref{cl:lem:stronger-sidonicity(H)}}]
We just prove the statement relative to $X$, as the other is similar. For,
let $I \subseteq I_X$, and set $z := \sum_{i \in I} u_i$. The claim is obvious if $|I| \le 1$ (by the very definition of $I_X$), so assume $|I| \ge 2$.

Since $z \in U = X + Y$, there exist $J, K \subseteq \llb 1, n+1 \rrb$ such that $x := \sum_{j \in J} u_j \in X$, $y := \sum_{k \in K} u_k \in Y$, and $z = x + y$. Because $|I_X| \ge |I| \ge 2$, we thus obtain from Claim \ref{cl:lem:stronger-sidonicity(G)} that $J \uplus K = I$ and $K \subseteq I_Y$. But this is possible only if $K = \emptyset$, because $I_X \cap I_Y = \emptyset$ by Lemma \ref{lem:pre-sidonicity}\ref{it:lem:pre-sidonicity(i)} and $K \subseteq I \subseteq I_X$. So $I = J$, and hence $z = x \in X$.
\end{proof}
With all this in hand, we are ready to conclude. In fact, we get from Claim \ref{cl:lem:stronger-sidonicity(G)} that
$$
X \subseteq \sum_{i \in I_X} \{0, u_i\}
\quad\text{and}\quad
Y \setminus \{u_1 + \cdots + u_n\} \subseteq \sum_{i \in I_Y} \{0, u_i\},
$$
and from Claim \ref{cl:lem:stronger-sidonicity(H)} that
$$
\sum_{i \in I_X} \{0, u_i\} \subseteq X
\quad\text{and}\quad
\sum_{i \in I_Y} \{0, u_i\} \subseteq Y \setminus \{u_1 + \cdots + u_n\},
$$
with the result that $X = \sum_{i \in I_X} \{0, u_i\}$ and $Y \setminus \{u_1 + \cdots + u_n\} = \sum_{i \in I_Y} \{0, u_i\}$. This proves point \ref{it:fund-lemma(iii)}, while \ref{it:fund-lemma(ii)} follows from  Claim \ref{cl:lem:stronger-sidonicity(E)} (recall that we are assuming without loss of generality that $u_n \in X$).
\end{proof}
The next step is to determine the set of lengths of $X$ for some special choices of the set $X \in \mathcal P_\fuN(\mathbf N)$. Consistently with the notation introduced in \S{ }\ref{sec:factorizations} (in the special case of reduced, commutative monoids), we will identify a word $\mathfrak{c} \in \mathscr F(\mathscr A(\mathcal P_{\fin,0}(\mathbf N)))$ with the congruence class $\llb \mathfrak c \rrb_{\mathscr C_{\mathcal P_{\fin,0}(\mathbf N)}}$.
\begin{proposition}
\label{prop:set_of_lengths_of_intervals}
$\LLs\fixed[-0.2]{\text{ }}\bigl(\llb 0, n \rrb\bigr) = \llb 2, n \rrb$ for every $n \ge 2$.
\end{proposition}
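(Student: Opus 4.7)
\medskip

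The proof splits into the two inclusions $\mathsf{L}(\llb 0, n \rrb) \subseteq \llb 2, n \rrb$ and $\llb 2, n \rrb \subseteq \mathsf{L}(\llb 0, n \rrb)$; the first is essentially a cardinality count, while the second amounts to exhibiting, for every $k \in \llb 2, n \rrb$, an explicit factorization of $\llb 0, n \rrb$ into $k$ atoms of $\mathcal P_\fuN(\mathbf N)$.

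For the inclusion $\mathsf{L}(\llb 0, n \rrb) \subseteq \llb 2, n \rrb$, suppose $\llb 0, n \rrb = A_1 + \cdots + A_k$ with each $A_i$ an atom. By Proposition \ref{prop:basic-properties-of-power-monoids}\ref{it:prop:basic-properties-of-power-monoids(ii)} (the units of $\mathcal P_\fuN(\mathbf N)$ are the singletons inside $\mathbf N^\times = \{0\}$), each $|A_i| \ge 2$. Applying the Cauchy--Davenport type bound from Proposition \ref{prop:lin-orderable-H}\ref{it:prop:lin-orderable-H(0)} iteratively, I get $n + 1 = |\llb 0, n \rrb| \ge \sum_i |A_i| - (k-1) \ge k+1$, hence $k \le n$. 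On the other hand, $\llb 0, n \rrb$ is never an atom when $n \ge 2$: for $n = 2$ invoke Proposition \ref{prop:valuations}\ref{it:prop:valuations(v)}, and for $n \ge 3$ write $\llb 0, n \rrb = \{0, 1\} + \llb 0, n-1 \rrb$, where both factors are non-units. This gives $k \ge 2$.

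For the reverse inclusion, the plan is to realize each length $k \in \llb 2, n \rrb$ through a single unified construction. For $k = n$, the trivial decomposition $\llb 0, n \rrb = n \cdot \{0, 1\}$ works. For $k \in \llb 2, n-1 \rrb$, I will use
\[
\llb 0, n \rrb = (k-1) \cdot \{0, 1\} + Y_k, \qquad Y_k := \{0\} \cup \llb 2, n-k+1 \rrb.
\]
The set-theoretic identity is routine: $\llb 0, k-1 \rrb + Y_k$ equals $\llb 0, k-1 \rrb \cup \bigcup_{y=2}^{n-k+1} \llb y, y+k-1 \rrb = \llb 0, k-1 \rrb \cup \llb 2, n \rrb = \llb 0, n \rrb$, the last equality using $k \ge 2$ so that the two blocks overlap.

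The main (but still modest) obstacle is checking that $Y_k$ is an atom for every $k \in \llb 2, n-1 \rrb$, as this is what gives the factorization length exactly $k$. When $k = n-1$, one has $Y_k = \{0, 2\}$, which is an atom by Proposition \ref{prop:valuations}\ref{it:prop:valuations(iv)}. When $k \le n-2$, I apply Proposition \ref{prop:large_atoms} with $q = 1$, $\ell = 0$, $d = 1$, and $A = \llb 2, n-k+1 \rrb$: the hypotheses $\min(d, \min A) > \ell q$ and $x \equiv y \pmod d$ for all $x, y \in A$ are immediate, and since the excluded range $\llb 1, \lceil \ell/2 \rceil \rrb$ is empty when $\ell = 0$, the proposition forces $(q \cdot \llb 0, \ell \rrb) \cup A = Y_k$ to be an atom. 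Combining, $\llb 0, n \rrb = (k-1) \cdot \{0, 1\} + Y_k$ is a factorization of length $(k-1) + 1 = k$, completing the proof.
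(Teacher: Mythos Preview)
Your argument for the inclusion $\mathsf L(\llb 0,n\rrb)\subseteq\llb 2,n\rrb$ is fine, but the reverse inclusion has a genuine gap. Proposition~\ref{prop:large_atoms} requires $\ell\in\mathbf N^+$, so the case $\ell=0$ is not covered; and this is not just a formality, because the conclusion you draw from it is actually false. Concretely, for $n=6$ and $k=2$ your set is $Y_2=\{0\}\cup\llb 2,5\rrb=\{0,2,3,4,5\}$, and
\[
\{0,2,3,4,5\}=\{0,2\}+\{0,2,3\},
\]
with both summands non-units. So $Y_2$ is \emph{not} an atom, and your proposed decomposition $\llb 0,6\rrb=\{0,1\}+Y_2$ does not witness $2\in\mathsf L(\llb 0,6\rrb)$. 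More generally, whenever $n-k+1\ge 5$ the set $Y_k$ can split, so your scheme breaks down precisely for the small values of $k$ that are hardest to realize.

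The paper avoids this by induction: from $\llb 0,n+1\rrb=\{0,1\}+\llb 0,n\rrb$ one gets all lengths in $\llb 3,n+1\rrb$ for free, and then a single length-$2$ factorization $\llb 0,n+1\rrb=\{0,1\}+A$ is produced with $A=\{0,1\}\cup\{j\in\llb 2,n\rrb: j\equiv n\bmod 2\}$, whose atomicity follows from Proposition~\ref{prop:large_atoms} with $d=2$, $\ell=q=1$. If you want to salvage your direct (non-inductive) approach, you need a different family of atoms for the small-$k$ regime; the paper's $A$ above is one option for $k=2$.
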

\begin{proof}
As was noted before, $\mathcal P_\fuN(\mathbf{N})$ is a reduced \BF-monoid.
So the claim is trivial if $n = 2$, because if $\llb 0, 2 \rrb = X + Y$ for some $X, Y \subseteq \mathcal{P}_\fuN(\mathbf N) \setminus \bigl\{\{0\}\bigr\}$, then it is clear that $X = Y = \llb 0, 1 \rrb$.

Accordingly, suppose the claim is true for a fixed $n \ge 2$, and observe that $\llb 0, n+1 \rrb = \llb 0, 1 \rrb + \llb 0, n \rrb$. Since ${\sf L}(X) + {\sf L}(Y) \subseteq {\sf L}(X+Y)$ for all $X, Y \in \mathcal P_\fuN(\mathbf{N})$, it follows that
\begin{equation}
\label{equ:inclusion}
\LLs\fixed[-0.2]{\text{ }}\bigl(\llb 0, n+1 \rrb\bigr) \supseteq 1 + \LLs\fixed[-0.2]{\text{ }}\bigl(\llb 0, n \rrb\bigr) = \llb 3, n+1 \rrb.
\end{equation}
On the other hand, let $A := \{0, 2\}$ if $n = 2$ and $A := \{0, 1\} \cup \bigl\{k \in \llb 2, n \rrb: k \equiv n \bmod 2\bigr\}$ otherwise. Then $A$ is an atom by Propositions \ref{prop:valuations}\ref{it:prop:valuations(iv)} and \ref{prop:large_atoms} (apply the latter with $d = 2$ and $\ell = q = 1$),
and we have $\llb 0, n+1 \rrb = \{0, 1\} + A$, which implies, together with \eqref{equ:inclusion}, that $\llb 2, n+1 \rrb \subseteq \LLs\fixed[-0.2]{\text{ }}\bigl(\llb 0, n+1 \rrb\bigr)$.

So we are done, since $(\mathbf N, +)$ is a linearly orderable monoid, and therefore we get from Theorem \ref{th:normable_implies_atomic}\ref{it:th:normable(iv)} and Proposition \ref{prop:lin-orderable-H}\ref{it:prop:lin-orderable-H(ii)} that $\sup \LLs\fixed[-0.2]{\text{ }}\bigl(\llb 0, n+1 \rrb\bigr) \le \bigl|\llb 0, n+1 \rrb\bigr| - 1 = n+1$.
\end{proof}
\begin{proposition}
\label{lem:unicity}
Let $v_1, \ldots, v_\ell \in \mathbf N^+$ such that $v_1 + \cdots + v_i < \frac{1}{2} v_{i+1}$ for every $i \in \llb 1, \ell - 2 \rrb$ and, if $\ell \ge 2$, $v_1 + \cdots + v_{\ell-1} < v_\ell - v_{\ell-1}$. Then $\mathsf{Z}\fixed[-0.15]{\text{ }} \bigl(\{0, v_1\} + \cdots + \{0, v_\ell\}\bigr) \fixed[-0.15]{\text{ }} = \fixed[-0.15]{\text{ }} \bigl\{\{0, v_1\} \ast \cdots \ast \{0, v_\ell\}\bigr\}$ in $\mathcal P_\fuN(\mathbf{N})$.
\end{proposition}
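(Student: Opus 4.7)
I plan to proceed by induction on $\ell$. The base case $\ell = 1$ is immediate: $S := \{0, v_1\}$ is a two-element subset of $\mathbf N$, hence an atom of $\mathcal P_\fuN(\mathbf N)$ by Proposition \ref{prop:valuations}\ref{it:prop:valuations(iv)}, so $\mathsf Z(S) = \{\{0, v_1\}\}$. For $\ell \ge 2$, I would first establish a routine preservation statement: for any $j \in \llb 1, \ell \rrb$, the sublist $(v_i)_{i \in \llb 1, \ell \rrb \setminus \{j\}}$, reindexed in increasing order, still satisfies the hypotheses of the proposition at length $\ell - 1$. This is a case analysis on whether $j = \ell$ or $j < \ell$; the strictness of the assumed inequalities, together with $v_{\ell-2} \le v_1 + \cdots + v_{\ell-2} < \frac{1}{2} v_{\ell-1}$, handles the only delicate subcase $j = \ell$.

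The heart of the argument is an application of Lemma \ref{lem:fund-lemma} with $n = \ell - 1$ and $u_i = v_i$. Condition (c) of that lemma is the first hypothesis of the proposition; condition (a) follows immediately from $v_1 + \cdots + v_{\ell-1} < v_\ell - v_{\ell-1}$; and condition (b) also follows from this (as it forces $v_\ell > 2 v_{\ell-1}$). Crucially, the strictness of $v_1 + \cdots + v_{\ell-1} < v_\ell - v_{\ell-1}$ rules out the exceptional equality required in Lemma \ref{lem:fund-lemma}\ref{it:fund-lemma(ii)}. Therefore, for every decomposition $S = X + Y$ with $X, Y \in \mathcal P_\fuN(\mathbf N)$, Lemma \ref{lem:fund-lemma}\ref{it:fund-lemma(iii)} forces $X = \sum_{i \in I_X} \{0, v_i\}$ and $Y = \sum_{i \in I_Y} \{0, v_i\}$, where, by Lemma \ref{lem:pre-sidonicity}\ref{it:lem:pre-sidonicity(i)}, $I_X \uplus I_Y = \llb 1, \ell \rrb$.

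Now consider any factorization $\mathfrak a = A_1 \ast \cdots \ast A_k$ of $S$ into atoms. Since $\ell \ge 2$, $S$ is a sum of two non-units and hence is not an atom, so $k \ge 2$. Applying the previous paragraph to $X = A_1$ and $Y = A_2 + \cdots + A_k$ (both non-units), I obtain $A_1 = \sum_{i \in I_1} \{0, v_i\}$ for some non-empty proper $I_1 \subseteq \llb 1, \ell \rrb$. If $|I_1| \ge 2$, picking any $i_0 \in I_1$, the identity $A_1 = \{0, v_{i_0}\} + \sum_{i \in I_1 \setminus \{i_0\}} \{0, v_i\}$ would express $A_1$ as a sum of two non-units (each summand contains $0$ and at least one $v_i$), contradicting that $A_1$ is an atom. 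Hence $|I_1| = 1$, say $A_1 = \{0, v_{j_1}\}$, and $A_2 + \cdots + A_k = \sum_{i \ne j_1} \{0, v_i\}$. By the preservation statement of the first paragraph, the sublist $(v_i)_{i \ne j_1}$ satisfies the hypotheses, so the inductive hypothesis yields $A_2 \ast \cdots \ast A_k = \ast_{i \ne j_1} \{0, v_i\}$ in $\mathsf Z\bigl(\sum_{i \ne j_1} \{0, v_i\}\bigr)$. Concatenating with $A_1$ completes the induction.

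The real technical weight, namely the two-factor rigidity of decompositions, has already been absorbed into Lemma \ref{lem:fund-lemma}; the only genuinely new work here is the reduction to that lemma and the sublist preservation check, both of which are elementary.
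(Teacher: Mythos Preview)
Your proof is correct and follows essentially the same strategy as the paper: induction on $\ell$, with the inductive step driven by Lemma \ref{lem:fund-lemma} (whose exceptional case is excluded by the strict inequality $v_1+\cdots+v_{\ell-1}<v_\ell-v_{\ell-1}$) together with Lemma \ref{lem:pre-sidonicity}\ref{it:lem:pre-sidonicity(i)}. The only organizational difference is that the paper splits an arbitrary two-factor decomposition $V=X+Y$, applies the inductive hypothesis to both proper subsequences $I_X$ and $I_Y$, and concludes via Lemma \ref{lem:products-of-factorizations}, whereas you peel off a single atom $A_1$ (forcing $|I_1|=1$ by atomicity) and recurse on the complement; both routes are valid and the technical content is identical.
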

\begin{proof}
If $\ell = 1$, the conclusion is trivial, since every two-element set in $\mathcal P_{\fin,0}(\mathbf N)$ is an atom by Proposition \ref{prop:valuations}\ref{it:prop:valuations(iv)}. So let $\ell \ge 2$ and assume that the following condition is verified:
\begin{enumerate}[label={\rm (\textsc{h})}]
\item\label{item:condition(H)} If $t \in \llb 1, \ell-1\rrb$ and $x_1, \ldots, x_t \in \mathbf N^+$ are such that $x_1 + \cdots + x_i < \frac{1}{2}x_{i+1}$ for all $i \in \llb 1, t-2 \rrb$ and, when $t \ge 2$, $x_1 + \cdots + x_{t-1} < x_t - x_{t-1}$, then $\mathsf{Z}\fixed[-0.15]{\text{ }} \bigl(\{0, x_1\} + \cdots + \{0, x_t\}\bigr) \fixed[-0.15]{\text{ }} = \fixed[-0.15]{\text{ }} \bigl\{\{0, x_1\} \ast \cdots \ast \{0, x_t\}\bigr\}$.
\end{enumerate}
Next, suppose that $V := \sum_{i=1}^\ell \{0, v_i\} = X + Y$ for some non-unit $X, Y \in \mathcal P_\fuN(\mathbf N)$, and set
$$
I_X := \bigl\{i \in \llb 1, \ell \fixed[0.15]{\text{ }} \rrb: v_i \in X\bigr\}
\quad\text{and}\quad
I_Y := \bigl\{i \in \llb 1, \ell \fixed[0.15]{\text{ }} \rrb: v_i \in Y \bigr\}.
$$
By Lemma \ref{lem:fund-lemma} (applied with $n = \ell - 1$ and $u_1 = v_1, \ldots, u_{n+1} = v_\ell$), we see that
\begin{equation}\label{equ:splitting-equ}
X = \sum_{i \in I_X} \{0, v_i\},
\quad
Y = \sum_{i \in I_Y} \{0, v_i\},
\quad\text{and}\quad
I_X \uplus I_Y = \llb 1, \ell \rrb.
\end{equation}
In particular, $\emptyset \ne I_X, I_Y \subsetneq \llb 1, \ell \rrb$, because $X$ and $Y$ are both different from $\{0\}$.

Put $m := |I_X|$, and let $i_1, \ldots, i_m$ be the natural enumeration of $I_X$.
Since $v_{i_1}, \ldots, v_{i_m}$ is a subsequence of $v_1, \ldots, v_\ell$, we have $v_{i_1} + \cdots + v_{i_k} < \frac{1}{2}v_{i_{k+1}}$ for all $k \in \llb 1, m-2 \rrb$ and, for $m \ge 2$, $v_{i_1} + \cdots + v_{i_{m-1}} < v_{i_m} - v_{i_{m-1}}$. Therefore, we derive from condition \ref{item:condition(H)} that
$\mathsf{Z}(X) = \fixed[-0.15]{\text{ }} \bigl\{\{0, v_{i_1}\} \ast \cdots \ast \{0, v_{i_m}\}\bigr\}$.
Likewise, if $n := |I_Y|$ and $j_1, \ldots, j_n$ is the natural enumeration of $I_Y$, then $\mathsf{Z}(Y) = \fixed[-0.15]{\text{ }} \bigl\{\{0, v_{j_1}\} \ast \cdots \ast \{0, v_{j_n}\}\bigr\}$. 

So, putting it all together and recalling from \eqref{equ:splitting-equ} that $I_X \uplus I_Y = \llb 1, \ell \rrb$, we conclude by Lemma \ref{lem:products-of-factorizations} that $\mathsf{Z}(V) = \fixed[-0.15]{\text{ }} \bigl\{\{0, v_1\} \ast \cdots \ast \{0, v_\ell\}\bigr\}$.
\end{proof}
\begin{proposition}
\label{prop:final-step}
Let $n \in \mathbf N_{\ge 2}$, and let $u_1, \ldots, u_{n+1} \in \mathbf N^+$ such that
\begin{enumerate*}[label={\rm (\alph{*})}]
\item\label{it:prop:sidonicity(b)} $u_1 + \cdots + u_n = u_{n+1} - u_n$ and
\item\label{it:prop:sidonicity(c)} $u_1 + \cdots + u_i < \frac{1}{2} u_{i+1}$ for every $i \in \llb 1, n-1 \rrb$.
\end{enumerate*}
Set
$$
U := \{0, u_1\} + \cdots + \{0, u_{n+1}\}
\quad\text{and}\quad
A := \left\{ \sum_{i \in I} u_i: I \subseteq \llb 1, n-1 \rrb \right\}.
$$
Then the following hold:
\begin{enumerate}[label={\rm (\roman{*})}]
\item\label{it:prop:two_factorizations(i)} $B := A \cup (A + u_{n+1}) \cup \{u_1 + \cdots + u_n\} \in \mathscr{A}(\mathcal P_\fuN(\mathbf{N}))$ and $|B| \ge 3$.
\item\label{it:prop:two_factorizations(ii)} $\mathsf{Z}(U) = \fixed[-0.15]{\text{ }} \bigl\{\{0, u_n\} \ast B \fixed[0.2]{\text{ }}, \fixed[0.2]{\text{ }} \{0, u_1\} \ast \cdots \ast \{0, u_{n+1}\}\bigr\}$.
\end{enumerate}
In particular, ${\sf L}(U) = \{2, n+1\}$, $\Delta(U) = \{n-1\}$, and ${\sf c}(U) = n$.
\end{proposition}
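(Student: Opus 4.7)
The plan is to first establish (i), then use it together with Proposition \ref{lem:unicity} and Lemma \ref{lem:fund-lemma} to nail down $\mathsf{Z}(U)$ exactly, and finally to read off $\mathsf{L}(U)$, $\Delta(U)$, and $\mathsf{c}(U)$ from the two resulting factorizations.

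For (i), I would apply Proposition \ref{prop:atomic-unions} to $A$ with $b := u_{n+1}$ and $c := u_1 + \cdots + u_n$. Since $\sup A = u_1 + \cdots + u_{n-1}$, condition (c) with $i = n-1$ gives $2 \sup A < u_n$; combining with (a), which rewrites as $u_{n+1} = 2u_n + u_1 + \cdots + u_{n-1}$, yields both $2 \sup A < c$ and $b - \sup A = 2u_n > c$. The nondegeneracy $b \ne 2c$ is equivalent to $u_1 + \cdots + u_{n-1} \ne 0$, which holds because $n \ge 2$. So $B \in \mathscr A(\mathcal P_\fuN(\mathbf N))$, and $|B| \ge 3$ because $0$, $u_1 + \cdots + u_n$, and $u_{n+1}$ are three distinct elements of $B$.

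For (ii), I would first verify the equality $\{0, u_n\} + B = U$: expanding both sides as unions of translates of $A$, the two ``extra'' elements $u_1 + \cdots + u_n$ and $2u_n + u_1 + \cdots + u_{n-1}$ collapse into $U$ thanks to (a) (the latter being exactly $u_{n+1}$). Together with part (i) and Proposition \ref{prop:valuations}\ref{it:prop:valuations(iv)}, this exhibits the length-$2$ factorization $\{0, u_n\} \ast B$, while the second claimed factorization is immediate from the definition of $U$. For the converse, let $\mathfrak a = Z_1 \ast \cdots \ast Z_k \in \mathcal Z(U)$ with $k \ge 2$, set $X := Z_1$ and $Y := Z_2 \cdots Z_k$, and apply Lemma \ref{lem:fund-lemma}: either both $X$ and $Y$ have the form $\sum_{i \in I} \{0, u_i\}$ relative to a partition $\llb 1, n+1 \rrb = I_X \uplus I_Y$, or (up to swapping) $X = \{0, u_n\}$. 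In the first case, atomicity of $Z_1$ forces $|I_X| = 1$, so $X = \{0, u_j\}$ for some $j$; a short check using (a) and (c) shows that the truncated sequence $(u_i)_{i \ne j}$ satisfies the hypotheses of Proposition \ref{lem:unicity}, hence $Y$ has a unique factorization whose atoms are the singletons $\{0, u_i\}$ with $i \ne j$, and $\mathfrak a$ is (mod $\mathscr C$) a permutation of $\{0, u_1\} \ast \cdots \ast \{0, u_{n+1}\}$. In the second case, Lemma \ref{lem:fund-lemma}\ref{it:fund-lemma(iii)} gives $Y \setminus \{u_1 + \cdots + u_n\} = A \cup (A + u_{n+1})$, so $Y$ equals either $B$ or $A \cup (A + u_{n+1})$: the former forces $k = 2$ and $\mathfrak a = \{0, u_n\} \ast B$; the latter is handled again by Proposition \ref{lem:unicity} and produces $\mathfrak a = \{0, u_1\} \ast \cdots \ast \{0, u_{n+1}\}$.

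The concluding clause is a direct consequence: the two factorizations have lengths $2$ and $n+1$, so $\mathsf{L}(U) = \{2, n+1\}$ and $\Delta(U) = \{n-1\}$; and since $|\mathsf{Z}(U)| = 2$, the catenary degree equals the matching distance between the two factorizations, which by Lemma \ref{lem:fundamental-lemma-for-distance} is $\frac{1}{2}\bigl((n+1) + (n-1)\bigr) = n$, the first contribution counting the $n+1$ atoms that occur in exactly one of the two factorizations (the atom $B$ on one side and the $n$ atoms $\{0, u_i\}$ with $i \ne n$ on the other) and the second being the length gap. The hard part is the uniqueness argument in (ii): specifically, the ``exceptional'' subcase of Lemma \ref{lem:fund-lemma}\ref{it:fund-lemma(ii)} in which $Y = A \cup (A + u_{n+1})$ rather than $B$ must be reabsorbed into the long factorization via Proposition \ref{lem:unicity}, and checking its hypotheses for every truncated sequence $(u_i)_{i \ne j}$ requires a careful but routine verification that leans on both (a) and (c).
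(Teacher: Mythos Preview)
Your proof is correct and follows the same strategy as the paper's---Proposition \ref{prop:atomic-unions} for (i), then Lemma \ref{lem:fund-lemma} combined with Proposition \ref{lem:unicity} for (ii)---the only organizational difference being that the paper runs the case analysis over arbitrary two-term decompositions $U = X + Y$ (and assembles the result via Lemma \ref{lem:products-of-factorizations}) rather than by splitting off the first atom of a given factorization. One small caveat: your ``up to swapping'' is loose, since $X = Z_1$ is the distinguished atom and the roles of $X$ and $Y$ are not symmetric; the omitted sub-case $Y = \{0,u_n\}$ forces $k=2$ and then $Z_1 \in \{B,\ \sum_{i\ne n}\{0,u_i\}\}$, the second possibility being excluded by atomicity of $Z_1$.
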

\begin{proof}
The ``In particular'' part of the statement is a straightforward consequence of point \ref{it:prop:two_factorizations(ii)}, so we can definitely focus on the proof of \ref{it:prop:two_factorizations(i)} and \ref{it:prop:two_factorizations(ii)}.

\ref{it:prop:two_factorizations(i)} Clearly $|B| \ge 3$, and hence  $B \ne \{0, u_i\}$ for every $i \in \llb 1, n+1 \rrb$, because $\{0, u_1, u_1 + \cdots + u_n\} \in B$ and $0 < u_1 < u_1 + \cdots + u_n$ (here we use that $n \ge 2$). Moreover, we have
$$
2\sup A = 2(u_1 + \cdots + u_{n-1}) \fixed[-0.25]{\text{ }}\stackrel{\ref{it:prop:sidonicity(c)}}{<}\fixed[-0.25]{\text{ }} u_n \le u_1 + \cdots + u_n \fixed[-0.25]{\text{ }}\stackrel{\ref{it:prop:sidonicity(b)}}{=}\fixed[-0.25]{\text{ }} u_{n+1} - u_n \fixed[-0.25]{\text{ }}\stackrel{\ref{it:prop:sidonicity(c)}}{<}\fixed[-0.25]{\text{ }} u_{n+1} - \sup A.
$$
Therefore, we infer from Proposition \ref{prop:atomic-unions} (applied with $b = u_{n+1}$ and and $c = u_1 + \cdots + u_n$) that $B$ is an atom of $\mathcal P_\fuN(\mathbf N)$, since it is clear from \ref{it:prop:sidonicity(b)} that $2(u_1 + \cdots + u_n) - u_{n+1} = $

\ref{it:prop:two_factorizations(ii)} Observe that $U$ is not an atom of $\mathcal P_{\fuN}(\mathbf N)$, and recall that $\mathcal P_\fuN(\mathbf N)$ is a reduced, commutative \BF-monoid (by Corollary \ref{cor:P_fuN(N)}). Accordingly, let $U = X + Y$ for some non-unit $X, Y \in \mathcal P_\fuN(\mathbf N)$, and set
$$\mathsf{Z}(X,Y) := \bigl\{\mathfrak{a} \ast \mathfrak{b}: (\mathfrak{a}, \mathfrak{b}) \in \mathsf{Z}(X) \times \mathsf{Z}(Y)\bigr\} \subseteq \mathsf Z(\mathcal P_\fuN(\mathbf N)).$$
Moreover, take 
$
I_X := \bigl\{i \in \llb 1, n+1 \rrb: u_i \in X\bigr\}$ and $
I_Y := \bigl\{i \in \llb 1, n+1 \rrb: u_i \in Y \bigr\}$.
By Lemma \ref{lem:fund-lemma},  
we have $I_X \uplus I_Y = \llb 1, n+1 \rrb$, and there are only two cases:
\vskip 0.1cm
\textsc{Case 1:} $X = \{0, u_n\}$ and $Y = B$ (up to rearrangement). By Proposition \ref{prop:valuations}\ref{it:prop:valuations(iv)} and point \ref{it:prop:two_factorizations(i)}, both $X$ and $Y$ are atoms, hence $\mathsf{Z}(X,Y) = \bigl\{\{0, u_n\} \ast B\bigr\}$.
\vskip 0.1cm
\textsc{Case 2:} $X = \sum_{i \in I_X} \{0, u_i\}$ and $Y = \sum_{i \in I_Y} \{0, u_i\}$. Let $i_1, \ldots, i_h$ be the natural enumeration of $I_X$ and $j_1, \ldots, j_k$ the natural enumeration of $I_Y$, where $h := |I_X|$ and $k := |I_Y|$ $\bigl($it is clear that $h, k \in \mathbf N^+$, because $X, Y \ne \{0\}\bigr)$.
Since $u_{i_1}, \ldots, u_{i_h}$ is a \textit{proper} subsequence of $u_1, \ldots, u_{n+1}$, it holds
$$u_{i_1} + \cdots + u_{i_s} < \frac{1}{2} u_{i_{s+1}},
\quad \text{for all }s \in \llb 1, h-2 \rrb,
$$
and
$$
u_{i_1} + \cdots + u_{i_{h-1}} < u_{i_h} - u_{i_{h-1}},\quad\text{for } h \ge 2.
$$
So, we get from Proposition \ref{lem:unicity} (applied with $\ell = h$ and $v_1 = u_{i_1}, \ldots, v_\ell = u_{i_h}$) that $\mathsf{Z}(X) = \bigl\{\{0, u_{i_1}\} \ast \cdots \ast \{0, u_{i_h}\}\bigr\}$. And in a similar way, we obtain that $\mathsf{Z}(Y) = \bigl\{\{0, u_{j_1}\} \ast \cdots \ast \{0, u_{j_k}\}\bigr\}$. Hence, using that $\llb 1, n+1 \rrb = \{i_1, \ldots, i_h\} \uplus \{j_1, \ldots, j_k\}$, we find
$\mathsf{Z}(X,Y) = \bigl\{\{0, u_1\} \ast \cdots \ast \{0, u_{n+1}\}\bigr\}$.
\vskip 0.1cm
We are now in the position to finish the proof of point \ref{it:prop:two_factorizations(ii)}, as we infer from the above and Lemma \ref{lem:products-of-factorizations} that $\mathsf{Z}(U) = \fixed[-0.15]{\text{ }} \bigl\{\{0, u_n\} \ast B \fixed[0.2]{\text{ }}, \fixed[0.2]{\text{ }} \{0, u_1\} \ast \cdots \ast \{0, u_{n+1}\}\bigr\}$.
\end{proof}
Finally, we have all the ingredients we need to prove the main result of this section.
\begin{theorem}
\label{th:main-theorem}
Let $H$ be a Dedekind-finite, non-torsion monoid. Then:
\begin{enumerate}[label={\rm (\roman{*})}]
\item\label{it:th:main-theorem(i)} $\mathscr{L}(\mathcal P_\fin(H)) \supseteq \mathscr{L}(\mathcal P_{\fun}(H)) \supseteq \mathscr{L}(\mathcal P_\fuN(\mathbf{N}))$.
\item\label{it:th:main-theorem(ii)} $\mathscr{U}_k(\mathcal P_\fin(H)) = \mathscr{U}_k(\mathcal P_{\fun}(H)) = \mathscr{U}_k(\mathcal P_\fuN(\mathbf N)) = \mathbf N_{\ge 2} $ for every $k \ge 2$.
\item\label{it:th:main-theorem(iii)} $\Delta(\mathcal P_\fin(H)) = \Delta(\mathcal P_{\fun}(H)) = \Delta(\mathcal P_\fuN(\mathbf{N})) = \mathbf N^+$.
\item\label{it:th:main-theorem(iv)} $\cat(\mathcal P_\fin(H)) \supseteq \cat(\mathcal P_{\fun}(H)) \supseteq \cat(\mathcal P_\fuN(\mathbf{N})) = \mathbf N^+$.
\end{enumerate}
In particular, if $H$ is a linearly orderable \BF-monoid, then the inclusions in point \ref{it:th:main-theorem(iv)} are equalities.
\end{theorem}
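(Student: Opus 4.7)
The plan is first to propagate inclusions through the transfer machinery of \S\S\ref{sec:monoids}--\ref{sec:power_monoid}, and then to realize the prescribed sets in $\mathcal P_\fuN(\mathbf N)$ via explicit witnesses drawn from \S\ref{sec:the_case_of_integers}.

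For the displayed chains in (i)--(iv), two ingredients suffice. First, by Theorem \ref{th:transfer} there is an injective equimorphism $\Phi\colon \mathcal P_\fuN(\mathbf N) \to \mathcal P_\fun(H)$, so Theorem \ref{th:cotransfer-hom}\ref{it:prop:cotransfer-hom(1)} gives $\LLc(\mathcal P_\fuN(\mathbf N)) \subseteq \LLc(\mathcal P_\fun(H))$, and with it the corresponding inclusions of unions $\UUc_k$ and of delta sets. Second, Proposition \ref{prop:basic-properties-of-power-monoids}\ref{it:prop:basic-properties-of-power-monoids(iv)} makes $\mathcal P_\fun(H)$ a divisor-closed submonoid of $\mathcal P_\fin(H)$, so Proposition \ref{prop:divisor-closed-sub} transports $\LLc$, $\Delta$, and $\cat$ across the second inclusion. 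For the catenary part, Theorem \ref{th:cotransfer-hom}\ref{it:prop:cotransfer-hom(4)} supplies only the inequality $\mathsf c_{\mathcal P_\fun(H)}(\Phi(X)) \le \mathsf c_{\mathcal P_\fuN(\mathbf N)}(X)$; combining condition \ref{it:th:transfer:condition(C)} of Theorem \ref{th:transfer} (to lift every factorization of $\Phi(X)$ back to $X$) with Theorem \ref{th:cotransfer-hom}\ref{it:prop:cotransfer-hom(4a)} and the injectivity of $\Phi$ (to see that lifting preserves matching distance), I expect to upgrade this to $\mathsf c_{\mathcal P_\fun(H)}(\Phi(X)) = \mathsf c_{\mathcal P_\fuN(\mathbf N)}(X)$, whence $\cat(\mathcal P_\fuN(\mathbf N)) \subseteq \cat(\mathcal P_\fun(H))$.

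For the right-hand-side identifications I use the constructions of \S\ref{sec:the_case_of_integers}. In (ii), the containment $\UUc_k(\mathcal P_\fin(H)) \subseteq \mathbf N_{\ge 2}$ (for $k \ge 2$) is immediate, since any $L \in \LLc$ containing $k$ must omit both $0$ and $1$ by Lemma \ref{lem:basic-properties-atoms-units}\ref{it:lem:basic-properties-atoms-units(iii)} and the paper's conventions; for the reverse, given $m, k \ge 2$ I set $n := \max(k, m)$ and appeal to Proposition \ref{prop:set_of_lengths_of_intervals} to see that $k, m \in \LLs(\llb 0, n \rrb) = \llb 2, n \rrb$, hence $m \in \UUc_k(\mathcal P_\fuN(\mathbf N))$. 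In (iii), $\Delta(\mathcal P_\fin(H)) \subseteq \mathbf N^+$ is tautological, and Proposition \ref{prop:final-step} yields, for every $n \ge 2$, a set $U_n$ with $\Delta(U_n) = \{n-1\}$, covering all of $\mathbf N^+$. In (iv), the same $U_n$ gives $\mathsf c(U_n) = n$ for every $n \ge 2$.

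What Proposition \ref{prop:final-step} does \emph{not} directly provide is $1 \in \cat(\mathcal P_\fuN(\mathbf N))$, and this is the main obstacle. I plan to exhibit an explicit half-factorial witness: for any sufficiently large integer $N$ (e.g.\ $N = 100$), the set $X := \llb 0, 4 \rrb \cup \{N, N+1\}$ admits exactly the three atomic factorizations $\{0, 1\} \ast B$ with $B \in \bigl\{\{0, 1, 3, N\},\, \{0, 2, 3, N\},\, \{0, 1, 2, 3, N\}\bigr\}$. To see this, observe that $\max A + \max A' = N + 1$ in any length-$2$ decomposition $X = A + A'$ forces $\{\max A, \max A'\} \subseteq X \cap \bigl((N+1) - X\bigr)$, which for $N$ large equals $\{0, 1, N, N+1\}$; the only non-trivial possibility is $\max A = 1$ (so $A = \{0, 1\}$) and $\max A' = N$, and the constraint $\{0, 1\} + B = X$ then enumerates the three $B$'s, each of which is an atom by the same max-trick. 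Factorizations of length $\ge 3$ are excluded because the ``innermost'' atom of such a factorization would have to belong to the same list, forcing the remaining sumset to be an atom and contradicting the length. Since the three $B$'s differ pairwise by a single-element swap, Lemma \ref{lem:fundamental-lemma-for-distance} gives pairwise matching distance $1$, and consequently $\mathsf c(X) = 1$. For the ``In particular'' clause, when $H$ is a linearly orderable BF-monoid, Proposition \ref{prop:lin-orderable-H}\ref{it:prop:lin-orderable-H(iii)} promotes $\mathcal P_\fin(H)$ to a BF-monoid, and axiom \ref{it:distance-definition(v)} gives $\mathsf c(X) \le \sup \LLs(X) < \infty$ for each $X$, so $\cat(\mathcal P_\fin(H)) \subseteq \mathbf N^+$ and the chain of inclusions in (iv) collapses.
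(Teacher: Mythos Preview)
Your treatment of \ref{it:th:main-theorem(i)}--\ref{it:th:main-theorem(iii)} and of the ``In particular'' clause follows the paper's route exactly. The problem is in \ref{it:th:main-theorem(iv)}, where you assert that condition \ref{it:th:transfer:condition(C)}, Theorem \ref{th:cotransfer-hom}\ref{it:prop:cotransfer-hom(4a)}, and the injectivity of $\Phi$ together yield $\mathsf c_{\mathcal P_\fun(H)}(\Phi(X)) = \mathsf c_{\mathcal P_\fuN(\mathbf N)}(X)$ for every $X$. Injectivity of $\Phi$ preserves \emph{cardinalities}, but it does not make $\Phi$ isoatomic: non-associate atoms of $\mathcal P_\fuN(\mathbf N)$ can become associate in $\mathcal P_\fun(H)$, and then the matching distance strictly drops under $\Phi^\ast$. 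Concretely, take $H$ to be the infinite dihedral group $\langle x,s\mid s^2=1,\ sxs^{-1}=x^{-1}\rangle$ (a Dedekind-finite, non-torsion monoid) with $x_0=x$; then $(x^3 s)\cdot\Phi(\{0,1,3\})\cdot s=\{x^3,x^2,1\}=\Phi(\{0,2,3\})$, so $\Phi(\{0,1,3\})\simeq_{\mathcal P_\fun(H)}\Phi(\{0,2,3\})$ although $\{0,1,3\}\ne\{0,2,3\}$ in the reduced monoid $\mathcal P_\fuN(\mathbf N)$. Your lifting argument therefore cannot bound $\mathsf d_{\mathcal P_\fuN(\mathbf N)}(\mathfrak c_{i-1},\mathfrak c_i)$ by $\mathsf d_{\mathcal P_\fun(H)}(\mathfrak c_{i-1}',\mathfrak c_i')$, and the inclusion $\cat(\mathcal P_\fuN(\mathbf N))\subseteq\cat(\mathcal P_\fun(H))$ is left unproved.

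The paper avoids this by never claiming the general equality. Instead it fixes, for each $n\in\mathbf N^+$, a witness $U_n$ with exactly two factorization classes in $\mathcal P_\fuN(\mathbf N)$, chosen so that the only atom common to both has a cardinality different from every other atom appearing; since associates in $\mathcal P_\fun(H)$ have equal cardinality and $\Phi$ preserves cardinality, the wedge computation in $\mathcal P_\fun(H)$ goes through verbatim and $\mathsf c_{\mathcal P_\fun(H)}(\Phi(U_n))=n$. In particular, the paper's witness for $n=1$ is $\llb 0,6\rrb\setminus\{4\}$, whose factorizations $\{0,1\}\ast\{0,2,5\}$ and $\{0,1\}\ast\{0,1,2,5\}$ involve atoms of cardinalities $2,3,4$, all distinct. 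Your alternative witness $\llb 0,4\rrb\cup\{N,N+1\}$ is correct in $\mathcal P_\fuN(\mathbf N)$, but two of its three $B$'s have cardinality $4$, so the cardinality trick does not separate their $\Phi$-images and you would again need an extra argument to push $1$ into $\cat(\mathcal P_\fun(H))$.
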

\begin{proof}
To ease notation, we will write $P$ in place of $\mathcal P_{\fin,\times}(H)$ and $P_0$ in place of $\mathcal P_{\fin,0}(\mathbf N)$.

Clearly,
\ref{it:th:main-theorem(i)} follows from Theorem \ref{th:transfer} and Proposition \ref{prop:basic-properties-of-power-monoids}\ref{it:prop:basic-properties-of-power-monoids(iv)}; \ref{it:th:main-theorem(ii)} from \ref{it:th:main-theorem(i)} and Proposition \ref{prop:set_of_lengths_of_intervals}; and \ref{it:th:main-theorem(iii)} from \ref{it:th:main-theorem(i)} and Proposition \ref{prop:final-step}.
As for \ref{it:th:main-theorem(iv)}, we need some more work.

To start with, we get from Proposition \ref{prop:final-step} that $\mathbf N_{\ge 2} \subseteq \cat(P_0)$, and since $P_0$ is a BF-monoid, it is evident that $\cat(P_0) \subseteq \mathbf N^+$. This yields $\cat(P_0) = \mathbf N^+$, as it is easy to check that
\begin{equation}
\label{equ:final-equation}
\mathsf{Z}\bigl(\llb 0, 6 \rrb \setminus \{4\}\bigr) = \fixed[-0.15]{\text{ }} \bigl\{\{0, 1\} \ast \{0,2,5\} \fixed[0.2]{\text{ }}, \fixed[0.2]{\text{ }} \{0, 1\} \ast \{0,1,2,5\}\bigr\} \subseteq \mathsf Z(P_0).
\end{equation}
On the other hand, Proposition \ref{prop:basic-properties-of-power-monoids}\ref{it:prop:basic-properties-of-power-monoids(iv)} yields $\cat(P) \subseteq \cat(\mathcal P_\fin(H))$. So we are left to show that $\cat(P_0) \subseteq \cat(P)$, as the ``In particular'' part of the statement is a consequence of \ref{it:th:main-theorem(iv)} and Proposition \ref{prop:lin-orderable-H}.

For, pick $n \in \mathbf N^+$ and let $\Phi$ be the same homomorphism of Theorem \ref{th:transfer}. We set $U_n := \llb 0, 6 \rrb \setminus \{4\}$ if $n = 1$; and $U_n := \sum_{i=1}^{n+1} \{0, u_i\}$ otherwise, where $u_1, \ldots, u_{n+1} \in \mathbf N^+$, $u_1 + \cdots + u_n = u_{n+1} - u_n$, and $u_1 + \cdots + u_i < \frac{1}{2} u_{i+1}$ for every $i \in \llb 1, n-1 \rrb$.
Also, we define ${\sf c}_n := {\sf c}_{P_0}(U_n)$ and ${\sf c}_n^\star := {\sf c}_{P}(\Phi(U_n))$.

By \eqref{equ:final-equation} and Proposition \ref{prop:final-step}, there exist atoms $A_0, \ldots, A_{n+1} \in \mathscr{A}(P_0)$ such that $|A_i| \ne |A_0|$ for all $i \in \llb 1, n+1 \rrb$ and $
\mathsf{Z}_{P_0}(U_n) = \{A_0 \ast A_1, A_1 \ast \cdots \ast A_{n+1}\} \subseteq \mathsf Z(P_0)$.
So it is evident that $\mathsf c_n = n$.

On the other hand, we know from Theorem \ref{th:transfer} that $\Phi$ is actually an injective equimorphism. 
Consequently, it follows from the above and condition \ref{covariant-transfer(3)} of Definition \ref{def:equimorphisms} that 
$$
\mathsf{Z}_{P}(\Phi(U_n)) = \bigl\{\llb \Phi(A_0) \ast \Phi(A_1) \rrb_{\mathscr C_P}, \llb \Phi(A_1) \ast \cdots \ast \Phi(A_{n+1}) \rrb_{\mathscr C_P} \bigr\} \subseteq \mathsf Z(P),
$$
Besides, the injectivity of $\Phi$ implies that $|\Phi(A_i)| = |A_i| \ne |A_0| = |\Phi(A_0)|$ for every $i \in \llb 1, n+1 \rrb$, with the result that
$(\Phi(A_0) \ast \Phi(A_1)) \wedge_P (\Phi(A_1) \ast \cdots \ast \Phi(A_{n+1})) = n$.
So, putting it all together, we conclude from Lemma \ref{lem:fundamental-lemma-for-distance} that ${\sf c}_n^\star = {\sf c}_n = n$.
This finishes the proof, because $n \in \mathbf N^+$ was arbitrary.
\end{proof}
We close the section by proving that there is little chance that the arithmetic results summarized in Theorem \ref{th:main-theorem} can be also obtained via ``standard transfer techniques''.
\begin{proposition}
\label{prop:not_a_transfer_Krull_monoid}
Let $H$ be a Dedekind-finite, non-torsion monoid. Then neither $\mathcal P_{\fin}(H)$ nor $\mathcal P_\fun(H)$ is equimorphic to a cancellative monoid \textup{(}in particular, neither is a transfer Krull monoid\textup{)}.
\end{proposition}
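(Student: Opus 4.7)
The plan is to exhibit an element of $\mathcal P_{\fun}(H)$ (and hence of $\mathcal P_\fin(H)$) that admits two factorizations into atoms sharing a common atomic factor in such a way that, were we able to cancel the shared factor in a hypothetical cancellative target, we would be forced to equate an atom with a product of two atoms—an absurdity by Lemma \ref{lem:basic-properties-atoms-units}\ref{it:lem:basic-properties-atoms-units(i)}. Such a configuration is precisely what Proposition \ref{prop:final-step} delivers: fixing $n = 2$ and choosing integers $u_1, u_2, u_3 \in \mathbf N^+$ satisfying conditions (a) and (b) of that proposition (e.g.\ $(u_1,u_2,u_3)=(1,4,9)$), the element $U := \{0,u_1\}+\{0,u_2\}+\{0,u_3\}$ of $\mathcal P_\fuN(\mathbf N)$ has exactly two factorization classes, $\{0,u_2\} \ast B$ and $\{0,u_1\} \ast \{0,u_2\} \ast \{0,u_3\}$, where $B$ is the atom built in that proposition; and $\{0,u_2\}$ appears in both.

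I would next transplant this configuration into $\mathcal P_{\fun}(H)$ via the equimorphism $\Phi: \mathcal P_\fuN(\mathbf N) \to \mathcal P_{\fun}(H)$ supplied by Theorem \ref{th:transfer}. Writing $A_i' := \Phi(\{0,u_i\})$ and $B' := \Phi(B)$, atom-preservation of $\Phi$ together with Propositions \ref{prop:basic-properties-of-power-monoids}\ref{it:prop:basic-properties-of-power-monoids(iv)} and \ref{prop:divisor-closed-sub} guarantees that $B',A_1',A_2',A_3'$ are atoms of \textbf{both} $\mathcal P_{\fun}(H)$ and $\mathcal P_\fin(H)$. Crucially, the construction in the proof of Theorem \ref{th:transfer} gives $A_i' = \{1_H,x_0^{u_i}\}$ for a common $x_0 \in H$ of infinite order, so the $A_i'$ lie in the abelian cyclic subsemigroup generated by $x_0$ and therefore commute pairwise under set multiplication in $\mathcal P_\fin(H)$, \emph{even when $H$ itself is non-commutative}. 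Combining this with the homomorphism property of $\Phi$ yields
\[
A_2' \cdot B' \;=\; \Phi(U) \;=\; A_1' \fixed[0.05]{\text{ }} A_2' \fixed[0.05]{\text{ }} A_3' \;=\; A_2' \fixed[0.05]{\text{ }} A_1' \fixed[0.05]{\text{ }} A_3' \qquad \text{in } \mathcal P_\fin(H).
\]

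To conclude, I would assume for contradiction the existence of an equimorphism $\psi : M \to K$ with $M \in \{\mathcal P_{\fun}(H), \mathcal P_\fin(H)\}$ and $K$ cancellative, then apply $\psi$ to the displayed equalities and left-cancel $\psi(A_2')$ in $K$ to obtain $\psi(B') = \psi(A_1')\,\psi(A_3')$. Since $\psi$ is atom-preserving, $\psi(B')$ is an atom of $K$; but the product of two atoms is never an atom, by Lemma \ref{lem:basic-properties-atoms-units}\ref{it:lem:basic-properties-atoms-units(i)} (applicable because $\mathscr A(K) \ne \emptyset$). This contradiction settles the main assertion, and the parenthetical statement on transfer Krull monoids follows instantly: any weak transfer homomorphism is an equimorphism (Remark \ref{rem:comparison-between-equis-and-weak-transfer-homs}), and monoids of zero-sum sequences over an abelian group are cancellative.

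The only step that is not essentially bookkeeping is the verification that the $A_i'$ commute pairwise in $\mathcal P_\fin(H)$, which is what allows the decisive ``rearrangement and left-cancellation'' trick to work in a possibly non-commutative ambient $K$; this would be the expected main obstacle in the non-commutative case, and it is disposed of by the observation that $\Phi$ lands inside the power monoid of the abelian cyclic subsemigroup generated by the chosen non-torsion element of $H$.
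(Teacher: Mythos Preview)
Your proof is correct and follows essentially the same strategy as the paper: use Proposition \ref{prop:final-step} (with $n=2$) and the equimorphism of Theorem \ref{th:transfer} to produce atoms $\bar A,\bar B,\bar C,\bar D$ in $\mathcal P_{\fun}(H)$ satisfying $\bar A\bar B=\bar A\bar C\bar D$, then cancel in the hypothetical cancellative target to reach a contradiction. The one simplification the paper makes is to perform the rearrangement \emph{upstream}, in the commutative monoid $\mathcal P_\fuN(\mathbf N)$, writing $\{0,u_2\}+B=\{0,u_2\}+\{0,u_1\}+\{0,u_3\}$ there and only then applying $\Phi$; this yields $\bar A\bar B=\bar A\bar C\bar D$ directly and makes the step you flagged as ``the expected main obstacle''---the pairwise commutativity of the $A_i'$ in $\mathcal P_\fin(H)$---entirely unnecessary.
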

\begin{proof}
By Proposition \ref{prop:final-step} (applied with $r = 2$), there are $A, B, C, D \in \mathscr{A}(\mathcal P_\fuN(\mathbf N))$ such that $A + B = A + C + D$. So, if $\Phi$ is the equimorphism of Theorem \ref{th:transfer}, then $\bar{A} := \Phi(A)$, $\bar{B} := \Phi(B)$, $\bar{C} := \Phi(C)$, and $\bar{D} := \Phi(D)$ are atoms of $\mathcal P_\fun(H)$. In addition, $\bar{A} \bar{B} = \bar{A}\fixed[0.2]{\text{ }} \bar{C} \fixed[-0.1]{\text{ }} \bar{D}$.

Building on these premises, suppose for a contradiction that there is an equimorphism $\varphi: \mathcal P_{\fin}(H) \to K$ (respectively, $\varphi: \mathcal P_\fun(H) \to K$) for which $K$ is a cancellative monoid.
It follows
$$
\varphi(\bar{A}) \fixed[0.15]{\text{ }} \varphi(\bar{B}) = \varphi(\bar{A}) \fixed[0.15]{\text{ }} \varphi(\bar{C}) \fixed[0.15]{\text{ }} \varphi(\bar{D}),
$$
which, by cancellativity of $K$, yields $\varphi(\bar{B}) = \varphi(\bar{C}) \fixed[0.15]{\text{ }} \varphi(\bar{D})$.
However, we know from Proposition \ref{prop:basic-properties-of-power-monoids}\ref{it:prop:basic-properties-of-power-monoids(iv)} that $\mathcal P_\fun(H)$ is a divisor-closed submonoid of $\mathcal P_\fin(H)$, and this implies, by the above and Proposition \ref{prop:divisor-closed-sub}, that $\bar{B}$, $\bar{C}$, and $\bar{D}$ are also atoms of $\mathcal P_\fin(H)$. So, using that $\varphi$ is atom-preserving, we conclude that $\varphi(\bar{B})$, $\varphi(\bar{C})$, and $\varphi(\bar{D})$ are all atoms of $K$, in contradiction to the fact that $\varphi(\bar{B}) = \varphi(\bar{C}) \fixed[0.15]{\text{ }} \varphi(\bar{D})$.
\end{proof}
\section{Prospects for future research}
\label{sec:future}
We conjecture that, if $H$ is a Dedekind-finite, non-torsion monoid, then the systems of sets of lengths of $\mathcal P_\fin(H)$ and $\mathcal P_\fun(H)$ contain every non-empty finite subset of $\mathbf N_{\ge 2}$.
Note that, by Theorem \ref{th:transfer} and Proposition \ref{prop:lin-orderable-H}\ref{it:prop:lin-orderable-H(ii)}-\ref{it:prop:lin-orderable-H(iii)}, it is sufficient to show that 
$$
\mathscr{L}(\mathcal P_\fuN(\mathbf N)) = \bigl\{\{0\} \fixed[0.15]{\text{ }}, \{1\}\bigr\} \cup \mathcal P_\fin\fixed[-0.2]{\text{ }}\bigl(\mathbf N_{\ge 2}\bigr).
$$
The conjecture is probably difficult, and we hope it will stimulate further work in the subject. Analogous conclusions are known to hold for certain cancellative commutative monoids, see, e.g., \cite[Theorem 1]{Ka90} or  \cite[Corollary 4.1]{FrNaRi17}.
\section*{Acknowledgements}
\label{subsec:acks}
The authors are indebted to Alfred Geroldinger for invaluable comments and enlightening conversations; to Daniel Smertnig for useful discussions (in particular, we owe him the observation that a monoid has a length function only if it is unit-cancellative) and his help with a campaign of numerical experiments that have eventually led to the formulation of Lemma \ref{lem:fund-lemma}; to Benjamin Steinberg for answering a question related to Lemma \ref{lem:basic-properties-atoms-units}\ref{it:lem:basic-properties-atoms-units(i)} on MathOverflow (see \href{http://mathoverflow.net/a/261870/16537}{http://mathoverflow.net/questions/261850/}); and to an anonymous referee for some precious remarks.


\begin{thebibliography}{99}
%
\bibitem{Al07} N. Alon, \textit{Large sets in finite fields are sumsets}, J. Number Theory \textbf{126} (2007), No.~1, 110--118.
%
\bibitem{AGU10} N. Alon, A. Granville, and A. Ubis, \textit{The number of sumsets in a finite field}, Bull. Lond. Math. Soc. \textbf{42} (2010), No.~5, 784-794.
%
\bibitem{And97} D.D. Anderson (ed.), \textit{Factorization in Integral Domains}, Lect. Notes Pure Appl. Math. \textbf{189}, Marcel Dekker, 1997.
%
\bibitem{An-Tn-18} A.A. Antoniou and S. Tringali, \textit{On the arithmetic of power monoids and sumsets in cyclic groups}, e-print (\href{http://arxiv.org/abs/1804.10913}{arXiv:1804.10913}).
%
\bibitem{BaSm} N.R. Baeth and D. Smertnig, \textit{Factorization theory: From commutative to noncommutative settings}, J. Algebra \textbf{441} (2015), 475--551.
%
\bibitem{BaWi13} N.R. Baeth and R. Wiegand, \textit{Factorization Theory and Decompositions of Modules}, Amer. Math. Monthly \textbf{120} (2013), No.~1, 3--34.
%
\bibitem{BaCh11} P. Baginski and S.T. Chapman, \textit{Factorizations of algebraic integers, block monoids, and additive number theory}, Amer. Math. Monthly \textbf{118} (2011), 901--920.
%
\bibitem{Ch05} S.T. Chapman (ed.), \textit{Arithmetical Properties of Commutative Rings and Monoids}, Lect. Notes Pure Appl. Math. \textbf{241}, Chapman \& Hall/CRC, 2005.
%
\bibitem{ChFoGeOb16} S. Chapman, M. Fontana, A. Geroldinger, and B. Olberding (eds.), \textit{Multiplicative Ideal Theory and Factorization Theory: Commutative and Non-Commutative Perspectives}, Springer Proc. Math. Stat. \textbf{170}, Springer, 2016.
%
\bibitem{ChGaLlPoRo} S.T. Chapman, P.A. Garc\'ia-S\'anchez, D. Llena, V. Ponomarenko, and J.C. Rosales, \textit{The catenary and tame degree in finitely generated commutative cancellative monoids}, Manuscripta Math. \textbf{120} (2006), No.~3, 253--264.
%
\bibitem{ChGl00} S.T. Chapman and S. Glaz (eds.), \textit{Non-Noetherian Commutative Ring Theory}, Math. Appl. \textbf{520}, Springer, 2000.
%
\bibitem{ChGoPe14} S.T. Chapman, F. Gotti, and R. Pelayo, \textit{On delta sets and their realizable subsets in Krull monoids with cyclic class groups}, Colloq. Math. \textbf{137} (2014), No.~1, 137--146.
%
\bibitem{ChAn13} S. Chun and D.D. Anderson, \textit{Irreducible elements in commutative rings with zero-divisors, II}, Houston J. Math. \textbf{39} (2013), No.~3, 741--752.
%
\bibitem{Cohn05} P.M. Cohn, \textit{Basic Algebra: Groups, Rings and Fields}, Springer, 2005 (2nd printing).
%
%
%
\bibitem{Fa03} C. Faith, \textit{Dedekind Finite Rings and a Theorem of Kaplansky}, Comm. Algebra \textbf{31} (2003), No.~9, 4175--4178.
%
\bibitem{FG016} Y. Fan and A. Geroldinger, \textit{Minimal relations and catenary degrees in Krull monoids},
    to appear in J. Commut. Algebra (\href{http://arxiv.org/abs/1603.06356}{arXiv:1603.06356}).
%
\bibitem{FGKT} Y. Fan, A. Geroldinger, F. Kainrath, and S. Tringali, \textit{Arithmetic of commutative semigroups with a focus on semigroups of ideals and modules}, J. Algebra Appl. \textbf{16} (2017), No.~11, 42 pages. 
%
\bibitem{FoHoLu13} M. Fontana, E. Houston, and T. Lucas, \textit{Factoring Ideals in Integral Domains}, Lect. Notes Unione Mat. Ital. \textbf{14}, Springer, Berlin, 2013.
%
%
\bibitem{FHLMPRS16} G.A. Freiman, M. Herzog, P. Longobardi, M. Maj, A. Plagne, D.J.S. Robinson, and Y.V. Stanchescu, \textit{On the structure of subsets of an orderable group with some small doubling properties}, J. Algebra \textbf{445} (2016), 307--326.
%
\bibitem{FHLMS15} G.A. Freiman, M. Herzog, P. Longobardi, M. Maj, and Y.V. Stanchescu, \textit{A small doubling structure theorem in a Baumslag–Solitar group}, European J. Combin. \textbf{44}, Part A (2015), 106--124.
%
\bibitem{Fr13} S. Frisch, \textit{A construction of integer-valued polynomials with prescribed sets of lengths of factorizations}, Monatsh. Math. \textbf{171} (2013), Nos. 3--4, 341--350.
%
\bibitem{FrNaRi17} S. Frisch, S. Nakato, and R. Rissner, \textit{Sets of lengths of factorizations of integer-valued polynomials on Dedekind domains with finite residue fields}, e-print (\href{https://arxiv.org/abs/1710.06783}{arXiv:1710.06783}).
%
\bibitem{GaMoVi} J.I. Garc\'ia-Garc\'ia, M.A. Moreno-Fr\'ias, and A. Vigneron-Tenorio, \textit{Computation of delta sets of numerical monoids}, Monatsh. Math. \textbf{178} (2015), No.~3, 457--472.
%
\bibitem{Ge16c} A. Geroldinger, \textit{Sets of lengths}, Amer. Math. Monthly \textbf{123} (2016), No.~10, 960--988.
%
\bibitem{Ge13} \bysame, \textit{Non-commutative Krull monoids: A divisor theoretic approach and
their arithmetic}, Osaka J. Math. \textbf{50} (2013), No.~2, 503--539.
%
%
\bibitem{GeHK06} A.~Geroldinger and F.~Halter-Koch, \textit{Non-Unique Factorizations. Algebraic, Combinatorial and Analytic Theory}, Pure Appl. Math. \textbf{278}, Chapman \& Hall/CRC, Boca Raton (FL), 2006.
%
\bibitem{Ge-Ru09} A.~Geroldinger and I.Z.~Ruzsa (eds.), \textit{Combinatorial Number Theory and Additive Group Theory}, Birkh\"auser, Basel, 2009.
%
\bibitem{GeSch16} A. Geroldinger and W. Schmid, \textit{A realization theorem for sets of distances}, J. Algebra \textbf{481} (2017), 188--198.
%
\bibitem{GeSc17} A. Geroldinger and E.D. Schwab, \textit{Sets of lengths in atomic unit-cancellative finitely presented monoids}, Colloq. Math. \textbf{151} (2018), No.~1, 171--187.
%
\bibitem{GeYu12} A. Geroldinger and P. Yuan, \textit{The set of distances in Krull monoids}, Bull. Lond. Math. Soc. \textbf{44} (2012), No.~6, 1203--1208.
%
\bibitem{GeZh16} A. Geroldinger and Q. Zhong, \textit{The set of distances in seminormal weakly Krull monoids},
    J. Pure Appl. Algebra \textbf{220} (2016), No.~11, 3713--3732.
%
\bibitem{Gry13} D.J. Grynkiewicz, \textit{Structural Additive Theory}, Dev. Math. \textbf{30}, Springer, 2013.
%
\bibitem{GyKoSa13} K. Gyarmati, S. Konyagin, and A. S\'ark\"ozy, \textit{On the reducibility of large sets of residues modulo $p$}, J. Number Theory \textbf{133} (2013), No.~7, 2374--2397.
%
\bibitem{GySa15} K. Gyarmati and A. S\'ark\"ozy, \textit{On Reducible and Primitive Subsets of $\mathbb F_p$, II}, Q. J. Math. \textbf{68} (2017), No.~1, 59--77.
%
\bibitem{HK97a} F.~Halter-Koch, ``Finitely Generated Monoids, Finitely Primary Monoids, and
 Factorization Properties of Integral Domains'', pp. 31--72 in D.D.~Anderson (ed.), \emph{Factorization in Integral Domains}, Lect. Notes Pure Appl. Math. \textbf{189}, Dekker, New York, 1997.
%
\bibitem{Has02} W. Hassler, \textit{Factorization properties of Krull monoids with infinite class group}, Colloq. Math. \textbf{92} (2002), No.~2, 229--242.
%
\bibitem{Iwasa48} K.~Iwasawa, \textit{On linearly ordered groups}, J. Math. Soc. Japan \textbf{1} (1948), No.~1, 1--9.
%
\bibitem{Ka90} F. Kainrath, \textit{Factorization in Krull monoids with infinite class group}, Colloq. Math. \textbf{80} (1999), No.~1, 23--30.
%
\bibitem{Lam} T.Y. Lam, \textit{Exercises in Classical Ring Theory}, Problem Books in Math., Springer, New York, 2003 (2nd edition).
%
\bibitem{Levi13} F.W.~Levi, \textit{Arithmetische Gesetze im Gebiete diskreter Gruppen}, Rend. Circ. Mat. Palermo \textbf{35} (1913), 225--236.
%
\bibitem{Malc48} A.I. Mal'tsev, \textit{On ordered groups}, Izv. Akad. Nauk. SSSR Ser. Mat. \textbf{13} (1949), No.~6, 473--482.
%
\bibitem{Nark74} W. Narkiewicz, \textit{Elementary and Analytic Theory of Algebraic Numbers}, Springer Monogr. Math., Springer-Verlag, 2004 (3rd edition).
%
\bibitem{Nath1} M.B. Nathanson, \textit{Additive Number Theory: The Classical Bases}, Grad. Texts in Math. \textbf{164}, Springer, New York, 1996.
%
\bibitem{Nath2} \bysame, \textit{Additive Number Theory: Inverse Problems and the Geometry of Sumsets}, Grad. Texts in Math. \textbf{165}, Springer, New York, 1996.
%
\bibitem{Neum49} B.H.~Neumann, \textit{On ordered groups}, Amer. J. Math. \textbf{71} (1949), 1--18.
%
\bibitem{OPTW16} C. O'Neill, V. Ponomarenko, R. Tate, and G. Webb, \textit{On the set of catenary degrees of finitely generated cancellative commutative monoids}, Internat. J. Algebra Comput. \textbf{26} (2016), No.~3, 565--576.
%
\bibitem{PlTr16} A. Plagne and S. Tringali, \textit{Sums of dilates in ordered groups}, Comm. Algebra \textbf{44} (2016), No.~12, 5223--5236.
%
\bibitem{RZ98} D. Rolfsen and J. Zhu, \textit{Braids, orderings and zero divisors}, J. Knot Theory Ramifications \textbf{7} (1998), No.~6, 837--841.
%
\bibitem{Rus} I.Z. Ruzsa, ``Sumsets and Structure'', pp. 88--210 in: A.~Geroldinger and I.Z.~Ruzsa (eds.), \textit{Com\-bi\-na\-to\-ri\-al Number Theory and Additive Group Theory}, Birkh\"auser, Basel, 2009.
%
\bibitem{Sa12} A. S\'ark\"ozy, \textit{On additive decompositions of the set of quadratic residues modulo $p$}, Acta Arith. \textbf{155} (2012), No.~1, 41--51.
%
\bibitem{Sm16} D. Smertnig, ``Factorizations of Elements in Noncommutative Rings: A survey'', pp. 353--402 in \cite{ChFoGeOb16}.
%
\bibitem{Sm13} \bysame, \textit{Sets of lengths in maximal orders in central simple algebras}, J. Algebra \textbf{390} (2013), 1--43.
%
\bibitem{TV06} T. Tao and V.H. Vu, \textit{Additive Combinatorics}, Cambridge Stud. Adv. Math. \textbf{105}, Cambridge Univ. Press, Cambridge, 2006.
%
\bibitem{Tr18} S. Tringali, \textit{Structural properties of subadditive families with applications to factorization theory}, to appear in Israel J. Math. (\href{http://arxiv.org/abs/1706.03525}{arXiv:1706.03525}).
%
\bibitem{Tr15} \bysame, \textit{Small doubling in ordered semigroups}, Semigroup Forum \textbf{90} (2015), No.~1, 135--148.
%
\end{thebibliography}
\end{document}